\documentclass{amsart}


\usepackage[numbers]{natbib}		
\usepackage{fancyhdr}				
\usepackage{amssymb} 				
\usepackage{amsthm} 				
\usepackage{thmtools} 				
\usepackage{mathtools}			
\usepackage[dvipsnames]{xcolor}	
\usepackage{stmaryrd} 			
\usepackage{tikz}
\usepackage{bm}
\usepackage{cancel}

\tikzstyle{every node} = [draw, fill=white, circle, inner sep=0pt, minimum size=5pt]
\tikzstyle{n} = [draw=none, rectangle, inner sep=0pt] 
\tikzstyle{i} = [draw, fill=black, circle, inner sep=0pt, minimum size=5pt] 
\tikzstyle{e} = [draw=none, rectangle, inner sep=0pt]
\tikzstyle{nci} = [draw, fill=black, rectangle, inner sep=0pt, minimum size=5pt]
\tikzstyle{nc} = [draw, fill=white, rectangle, inner sep=0pt, minimum size=5pt]
\tikzstyle{boolean} = [line width=2pt]


\declaretheorem[style=plain,numberwithin=section]{theorem}

\declaretheorem[style=plain,sibling=theorem]{proposition}

\declaretheorem[style=plain,sibling=theorem]{lemma}
\declaretheorem[style=plain,sibling=theorem]{corollary}
\declaretheorem[style=definition,sibling=theorem]{example}


\newcommand{\alg}[1]{\mathbf{#1}}		
\newcommand{\rd}{\slash}			
\newcommand{\ld}{\backslash}			
\newcommand{\negsim}{{\sim}}			
\newcommand{\negmin}{{-}}			
\newcommand{\CIdInRL}{\mathsf{CIdInRL}}	
\newcommand{\mleq}{\sqsubseteq}		


\DeclareSymbolFont{symbolsC}{U}{txsyc}{m}{n}
\SetSymbolFont{symbolsC}{bold}{U}{txsyc}{bx}{n}
\DeclareFontSubstitution{U}{txsyc}{m}{n}
\DeclareMathSymbol{\diamonddot}{\mathord}{symbolsC}{144}


\begin{document}

\title[Commutative idempotent involutive residuated lattices]{The structure of finite commutative idempotent involutive residuated lattices}

\author{Peter Jipsen}
\address{Faculty of Mathematics, Keck Center of Science and Engineering, Chapman University, 1 University Drive, Orange, CA, 92866, USA}
\email{jipsen@chapman.edu}

\author{Olim Tuyt}
\address{Mathematical Institute, University of Bern, Sidlerstrasse 5, 3012, Bern, Switzerland}
\email{olim.tuyt@math.unibe.ch}

\author{Diego Valota}
\address{Dipartimento di Informatica, Universit\`{a} degli Studi di Milano,Via Celoria 18, 20133 Milano, Italy}
\email{valota@di.unimi.it}

\subjclass{06F05, 06F15, 03G10, 06B15, 06E75}
\keywords{Residuated Lattices, Substructural Logics, Boolean Algebras, Representations, Local Finiteness.}

\begin{abstract} 
We characterize commutative idempotent involutive residuated lattices as 
disjoint unions of Boolean algebras arranged over a distributive lattice.
We use this description to introduce a new construction, called gluing, that allows us to build new members of
this variety from other ones. In particular, all finite members can be constructed in this way
from Boolean algebras.
Finally, we apply our construction to prove that the fusion reduct of any finite member is a distributive semilattice, 
and to show that this variety is not locally finite.
\end{abstract}

\maketitle

\section{Introduction}\label{sec:intro}

Residuated lattices are algebraic structures that provide an algebraic semantics for substructural logics \cite{GJKO07}. The variety $\mathsf{RL}$ of residuate lattices originated in the 1930s \cite{WD39}, and includes well-known classes of algebras such as Heyting algebras and lattice-ordered groups.

In this paper we study the variety $\CIdInRL$ of commutative idempotent involutive residuated lattices. From a logical perspective, commutative involutive residuated lattices are the algebraic semantics of multiplicative additive linear logic (\textsf{MALL}). Notable subvarieties of $\CIdInRL$ include the classes of Boolean algebras and Sugihara monoids, introduced as an algebraic semantics of the relevance logic $\textsf{RM}^t$ \cite{Du70}. 
Some structural results
about idempotent residuated lattices that are not necessarily commutative or involutive can be
found in \cite{R07} and \cite{FJM19}.
We obtain a structural characterization for all the finite members of $\CIdInRL$, or equivalently, all finite idempotent \textsf{MALL}-algebras. We reach our goal by describing the members of $\CIdInRL$ as disjoint unions of Boolean algebras under the distributive lattice order given by the commutative idempotent monoidal operation of our residuated lattices, with involution as complementation within each Boolean algebra.
We give a procedure to construct from two algebras in $\CIdInRL$ a new member of $\CIdInRL$. Moreover, we show that this procedure can be reversed for the finite members of $\CIdInRL$, giving us the structural description.

The structure of finite Sugihara monoids was already known to J.\ M.\ Dunn, who showed in \cite{ABN75}
that subdirectly irreducible Sugihara monoids are linearly ordered and described the structure
of finite Sugihara monoid chains in \cite{Du70}. Idempotent residuated
chains are given a structural description in \cite{CZ09}, and the structure of conic idempotent residuated lattices has been investigated in \cite{CZG09}.

More recently S.\ Jenei \cite{Je20,Je20a} 
gives a full structural description of all even or odd involutive commutative residuated chains.
This is a significant generalization of Dunn's results since idempotence is not assumed, and the idempotent elements in these algebras form a subalgebra that is a Sugihara monoid. Our results are a generalization in a different direction since we do not assume that the algebras are linearly ordered, but we do assume that all elements are idempotent. In \cite{R07} and \cite{Je20,Je20a} some lemmas about the term $|x|=\tau(x)=x\to x$ are proved and we reference these results in Section~\ref{sec:partition}, though we use the (equivalent) notation $1_x=x\vee {-}x$.

The structure of this paper is as follows: in Section~\ref{sec:prelim}, we introduce the basic definitions necessary for the rest of the paper. In Section~\ref{sec:partition}, we show that each algebra in $\CIdInRL$ is a disjoint union of Boolean algebras such that this union forms a distributive lattice. In Section~\ref{sec:filters} we investigate the congruences and filters of algebras in $\CIdInRL$, laying the groundwork for the gluing construction. In Section~\ref{sec:gluing} we outline the gluing construction and in Section~\ref{sec:reverse} we prove that every finite member of $\CIdInRL$ can be obtained as a gluing of smaller members, resulting in the structural characterization. Lastly, in Section~\ref{sec:app} we mention two applications of this characterization. 

\section{Preliminaries}\label{sec:prelim}

In this section we collect basic properties and definitions for the algebraic structures that we need in our investigation. A \emph{(pointed) residuated lattice} is a tuple $\alg{A} = \langle A, \wedge, \vee, \cdot, \ld, \rd, 1, 0 \rangle$ such that $\langle A, \wedge, \vee \rangle$ is a lattice, $\langle A, \cdot, 1 \rangle$ is a monoid, and $\cdot$ is residuated in the underlying lattice order with residuals $\ld$ and $\rd$, i.e. for $x,y,z \in A$,
	\[
	x \cdot y \leq z \qquad \iff \qquad x \leq z \rd y \qquad \iff \qquad y \leq x \ld z.
	\]
We call $\alg{A}$ \emph{idempotent} if $x \cdot x = x$ for all $x \in A$. We say that $\alg{A}$ is \emph{commutative} if $x \cdot y = y \cdot x$ for all $x,y \in A$. In this case, the two residuals coincide, e.g. $x \ld y = y \rd x$ for all $x,y \in A$, and we replace $\ld$ and $\rd$ in the signature with a single implication arrow $x \to y \coloneqq x \ld y$.
The \emph{linear negations} are defined by $\negmin x \coloneqq 0 \rd x$ and $\negsim x \coloneqq x \ld 0$. We call $\alg{A}$ \emph{involutive} if $\negsim \negmin x = x = \negmin \negsim x$ for all $x \in A$, whence it follows that $\negmin 1=\negsim 1=0$.

In this paper, we focus on the class of commutative idempotent involutive residuated lattices, denoted $\CIdInRL$. This class can be equationally defined over that of residuated lattices and hence forms a variety.\footnote{We say that a residuated lattice is \emph{cyclic} if $\negmin x = \negsim x$ for all $x \in A$. Clearly, any commutative residuated lattice is cyclic. It was shown by Jos\'e Gil-F\'erez in unpublished work that the converse also holds in the context of idempotent involutive residuated lattices. So the class $\CIdInRL$ coincides with the class of cyclic idempotent involutive residuated lattices.}

Note that for any pointed residuated lattice $\alg{A}$, we have the following equivalence for any $x,y \in A$:
\[
x \leq \negmin y \qquad \iff \qquad x \cdot y \leq 0 \qquad \iff \qquad y \leq \negsim x.
\]
Moreover, if $\alg{A}$ is involutive the residuals are definable in terms of fusion and involution, namely $x \ld y = \negsim (\negmin y \cdot x)$ and $y \rd x = \negmin (x \cdot \negsim y)$. This suggests that we can replace the residuals in the signature with the negations, which is indeed the case. 
An \textsf{InRL}-\emph{semiring} is an algebra $\alg{A} = \langle A, \vee, \cdot, 1, \negsim, \negmin \rangle$ such that $\langle A,\vee\rangle$ is a semilattice, $\langle A,\cdot,1\rangle$ is a monoid, $\negsim \negmin x = x = \negmin \negsim x$ for all $x \in A$, and for all $x,y \in A$, 
\[x \leq \negmin y\qquad\iff\qquad x \cdot y \leq \negmin 1\qquad\iff\qquad y \leq \negsim x.\]

\begin{theorem}[{\cite[\S 3.3.5]{GJKO07}}]\label{thm:termequiv}
\textsf{InRL}-semirings are term equivalent to involutive residuated lattices.
\end{theorem}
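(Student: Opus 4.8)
The plan is to establish term equivalence in the standard way: exhibit definitional translations between the two signatures, check that each translation carries an algebra of one class to an algebra of the other, and verify that the two translations are mutually inverse modulo the respective axioms. In one direction, from an involutive residuated lattice $\alg{A}$ I keep $\vee,\cdot,1$ and set $\negsim x \coloneqq x \ld 0$ and $\negmin x \coloneqq 0 \rd x$, forgetting $\wedge,\ld,\rd,0$. In the other direction, from an InRL-semiring $\alg{B}$ I keep $\vee,\cdot,1$ and define
\[
0 \coloneqq \negmin 1, \quad x \wedge y \coloneqq \negmin(\negsim x \vee \negsim y), \quad x \ld y \coloneqq \negsim(\negmin y \cdot x), \quad y \rd x \coloneqq \negmin(x \cdot \negsim y),
\]
using exactly the identities recorded before the theorem statement.

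The first direction is quick. The semilattice and monoid reducts, together with $\negsim\negmin x = x = \negmin\negsim x$, are inherited directly from $\alg{A}$. The only InRL-semiring axiom needing attention is the triple equivalence, and here I first note that $\negmin 1 = 0 \rd 1 = 0$, so $\negmin 1$ and $0$ coincide; then $x \le \negmin y \iff x\cdot y \le 0 \iff y \le \negsim x$ is just residuation of $\cdot$ applied with $z = 0$ together with the definitions of $\negsim,\negmin$.

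The second direction carries the real work. Starting from $\alg{B}$, I would first derive $\negmin 1 = \negsim 1$ by instantiating the triple equivalence at $x = 1$: this yields $y \le \negmin 1 \iff y \le \negsim 1$ for all $y$, forcing equality, so $0 = \negmin 1 = \negsim 1$ is well defined. Next I read the equivalence $x \le \negmin y \iff y \le \negsim x$ as an antitone Galois connection of $\langle B, \le\rangle$ with itself; this already forces $\negmin$ and $\negsim$ to be order reversing, and the involution axiom upgrades them to mutually inverse order anti-automorphisms. Since an order anti-automorphism sends an existing join to the meet of the images, the element $\negmin(\negsim x \vee \negsim y)$ is genuinely the meet of $x$ and $y$; hence $\langle B, \wedge, \vee\rangle$ is a lattice (absorption is automatic, as $\wedge,\vee$ are the meet and join of one order). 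It then remains to verify residuation, $x \cdot y \le z \iff x \le z\rd y \iff y \le x \ld z$. This is the main obstacle, and I expect to prove it by a chain of equivalences combining associativity, the $z=0$ residuation supplied by the triple equivalence, and antitonicity: for example $y \le x\ld z = \negsim(\negmin z \cdot x)$ is equivalent to $(\negmin z \cdot x)\cdot y \le 0$, hence to $\negmin z \le \negmin(x\cdot y)$, hence to $x \cdot y \le z$, with the analogous computation for $\rd$. Care is needed because commutativity is not assumed, so the left and right residuals must be treated separately.

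Finally I would close the loop. For an InRL $\alg{A}$, the derived $0 = \negmin 1$ recovers the original constant, the derived $\ld,\rd$ agree with the originals by the definability identities quoted in the excerpt, and the derived meet $\negmin(\negsim x \vee \negsim y)$ equals the original meet by the same anti-automorphism argument applied inside $\alg{A}$. For an InRL-semiring $\alg{B}$, using $0 = \negmin 1 = \negsim 1$ one computes $\negmin 0 = \negmin\negsim 1 = 1$ and $\negsim 0 = \negsim\negmin 1 = 1$, so the recovered negations $\negsim(\negmin 0 \cdot x) = \negsim x$ and $\negmin(x \cdot \negsim 0) = \negmin x$ return the originals. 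These round trips, together with the two class-membership checks, establish the term equivalence.
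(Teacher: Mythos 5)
Your proposal is correct: the paper gives no proof of this theorem, citing \cite[\S 3.3.5]{GJKO07} instead, and your argument is precisely the standard term-equivalence verification behind that citation, with the essential points all in place --- deriving $\negmin 1 = \negsim 1$ from the triple equivalence at $x=1$, upgrading $\negsim$ and $\negmin$ to mutually inverse order anti-isomorphisms so that $\negmin(\negsim x \vee \negsim y)$ really is the meet, obtaining both residuals from the $z \leq 0$ case of residuation via associativity, and checking both round trips. No gaps; note only that the definability identities $x \ld y = \negsim(\negmin y \cdot x)$ and $y \rd x = \negmin(x \cdot \negsim y)$, which the paper records in the idempotent commutative setting, hold in every involutive residuated lattice by exactly the Galois-connection computation you sketch, so your use of them in the round trip needs no extra hypotheses.
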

The element $\negmin 1$ is denoted by $0$, and it follows that $0=\negsim 1$.
Since $\cdot$ distributes over $\vee$ from the left and right in all residuated lattices, this also holds for \textsf{InRL}-semirings (as would be expected for semirings; note however that for \textsf{InRL}-semirings $0$ is in general not an absorbing element).
We call an \textsf{InRL}-semiring \emph{idempotent} if $x \cdot x = x$ for all $x \in A$ and \emph{commutative} if $x \cdot y = y \cdot x$ for all $x, y \in A$. 

For a commutative idempotent residuated lattice $\alg{A}$, the fusion operator $\cdot$ defines a semilattice order $\mleq$ where $x \mleq y \vcentcolon\Leftrightarrow x \cdot y = x$. We sometimes refer to the order $\mleq$ as the \emph{monoidal order}. Note that for a commutative idempotent residuated lattice $\alg{A}$, $\mleq$ is a meet-semilattice order with $1$ as its top element. This allows for a graphical representation of any $\alg{A} \in \CIdInRL$. To represent $\alg{A}$, it suffices to draw the Hasse diagrams of the lattice order $\leq$ and monoidal order $\mleq$ and define the involution. The division operators can be derived from this information.

\begin{example}
	Consider the algebra $\alg{A}_1 \in \CIdInRL$ as shown in Figure~\ref{fig:A1}. Note that $\negmin \bot = \top$ and $\negmin 0 = 1$.
\end{example}

\begin{figure}[h!]
\begin{center}
\begin{tikzpicture}[baseline=0pt,scale=0.7]
\node at (0,-1)[n]{$\langle A_1, \leq \rangle$};
\node at (0,-2)[n]{$ $};
\node(9) at (1,5)[label=right:$\top$]{};
\node(8) at (0,4)[label=left:$c$]{};
\node(7) at (0,3)[label=right:$1$]{};
\node(6) at (0,2)[label=right:$0$]{};
\node(5) at (-1.5,2.5)[label=left:$b$]{};
\node(4) at (2.5,3.5)[label=right:$-a$]{};
\node(3) at (1.5,2.5)[label=right:$-b$]{};
\node(2) at (0,1)[label=right:$-c$]{};
\node(1) at (-2.5,1.5)[label=left:$a$]{};
\node(0) at (-1,0)[label=right:$\bot$]{};
\draw(9) ;
\draw(8)  edge (9);
\draw(7)  edge (8);
\draw(6)  edge (7);
\draw(5)  edge (8);
\draw(4)  edge (9);
\draw(3)  edge (4) edge (8);
\draw(2)  edge (3) edge (5) edge (6);
\draw(1)  edge (5);
\draw(0)  edge (1) edge (2);
\end{tikzpicture}
\quad
\begin{tikzpicture}[baseline=0pt,scale=0.7]
\node at (3,-1)[n]{$\langle A_1, \mleq \rangle$};
\node at (3,-2)[n]{$ $};
\node(9) at (3,6)[label=right:$1$]{};
\node(8) at (3,5)[label=right:$0$]{};
\node(7) at (3,4)[label=right:$c$]{};
\node(6) at (2,3)[label=left:$b$]{};
\node(5) at (4,3)[label=right:$-b$]{};
\node(4) at (3,2)[label=right:${-}c$]{};
\node(3) at (1,2)[label=left:$\top$]{};
\node(2) at (2,1)[label=right:${-}a$]{};
\node(1) at (0,1)[label=left:$a$]{};
\node(0) at (1,0)[label=right:$\bot$]{};
\draw(9) ;
\draw[very thick](8)  edge (9);
\draw(7)  edge (8);
\draw[very thick](6)  edge (7);
\draw[very thick](5)  edge (7);
\draw[very thick](4)  edge (5) edge (6);
\draw(3)  edge (6);
\draw(2)  edge (4);
\draw[very thick](2)  edge (3);
\draw[very thick](1)  edge (3);
\draw[very thick](0)  edge (1) edge (2);
\end{tikzpicture}
\end{center}
\caption{The two orders $\mleq$ and $\leq$ of an algebra $\alg{A}_1 \in \CIdInRL$. 
}\label{fig:A1}
\end{figure}

\begin{example}\label{ex:sugihara}
A well-known subclass of $\CIdInRL$ is the variety $\mathsf{SM}$ of \emph{Sugihara monoids}, introduced as the algebraic semantics for the relevance logic $\textsf{RM}^t$ \cite{ABN75}. Here, a Sugihara monoid is a commutative idempotent involutive residuated lattice for which the underlying lattice order is distributive. In fact, Dunn showed that all Sugihara monoids are also \emph{semilinear} \cite[\S29.4]{ABN75}, i.e., they are the subdirect product of totally ordered members of $\CIdInRL$ or, equivalently, satisfy the equation $((x \to y) \wedge 1) \vee ((y \to x) \wedge 1) = 1$. Note that $\alg{A}_1$ in Figure~\ref{fig:A1} is neither distributive nor does it satisfy the equation for semilinearity (consider $x = a$ and $y = \negmin b$). Hence, $\CIdInRL$ satisfies neither distributivity nor semilinearity and so $\textsf{SM}$ is a proper subclass of $\CIdInRL$.
\end{example}

Given a residuated lattice $\alg{A}$, the set $A^+ \coloneqq \{ x \in A \mid 1 \leq x \}$ is called the \emph{positive cone} of $\mathbf{A}$ and its members are called \emph{positive elements} of $\alg{A}$, whereas the set $A^- \coloneqq \{ x \in A \mid x \leq 1 \}$ is called the \emph{negative cone} of $\mathbf{A}$ and its members \emph{negative elements} of $\alg{A}$. 
The set $A^-$ induces a (not necessarily pointed) residuated lattice $\alg{A}^- \coloneqq \langle A^-, \wedge, \vee, \cdot, \ld^{\alg{A}^-},\rd^{\alg{A}^-}, 1 \rangle$ where for all $a,b \in A^-$, 
\[
a \ld^{\alg{A}^-} b \coloneqq a \ld b \wedge 1 \enspace \text{and} \enspace a \rd^{\alg{A}^-} b \coloneqq a \rd b \wedge 1.
\]
In general, $0$ is not a member of $A^-$. However, if $\alg{A} \in \CIdInRL$, then $0 \in A^-$, as is clear from the following lemma. This lemma summarizes a number of elementary properties of members of $\CIdInRL$ without proof, used throughout the paper without explicit reference to this lemma.

\begin{lemma}
Let $\alg{A} \in \CIdInRL$. Then the following properties hold for all $x,y \in A$:
\[
\begin{array}{rlrl}
(1)	& x \leq y \text{ if and only if } \negmin y \leq \negmin x	& (5)	& \negmin(x \wedge y) = \negmin x \vee \negmin y \\
(2)	& x \wedge y \leq x \cdot y \leq x \vee y & (6)	& \negmin(x \vee y) = \negmin x \wedge \negmin y \\
(3)	& \text{if } x,y \in A^+, \text{ then } x \cdot y = x \vee y	& (7)	& x \to y = \negmin (x \cdot \negmin y) \\
(4)	& \text{if } x,y \in A^-, \text{ then } x \cdot y = x \wedge y	& (8)	& \negmin 1 = 0 \leq 1 = \negmin 0
\end{array}
\]
\end{lemma}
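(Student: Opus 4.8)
The plan is to derive all seven items from four ingredients already available in the excerpt: monotonicity of the monoid multiplication (which holds in every residuated lattice, since $a \le b$ forces $a \cdot c \le b \cdot c$ via residuation), idempotence, the fact that commutativity makes $\alg{A}$ cyclic so that $\negmin = \negsim$ is an involution with $\negmin\negmin x = x$, and the displayed equivalence $x \le \negmin y \iff x \cdot y \le 0 \iff y \le \negsim x$. Each property is a short consequence of one or two of these, so I would organize the proof by first recording the behaviour of $\negmin$ as an order-reversing involution, then exploiting idempotence for the multiplicative estimates, proving the items in the order $(1)\to(5),(6)\to(2)\to(3),(4)\to(7)$.

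First I would prove $(1)$. Using $\negmin=\negsim$ and $\negmin\negmin y = y$, the inequality $x \le y$ reads $x \le \negmin(\negmin y)$, which by the equivalence $x \le \negmin y \iff y \le \negsim x$ (symmetric in $x$ and $y$, since $\negsim=\negmin$), instantiated with $\negmin y$ in place of $y$, is equivalent to $\negmin y \le \negmin x$; reading the chain in both directions gives the stated biconditional. With $(1)$ in hand, $\negmin$ is an antitone bijection that is its own inverse, hence a dual automorphism of $\langle A,\wedge,\vee\rangle$, and $(5)$ and $(6)$ follow. For instance, $\negmin(x\wedge y)$ is an upper bound of $\negmin x,\negmin y$ by $(1)$, and any upper bound $z$ satisfies $\negmin z \le x$ and $\negmin z \le y$, so $\negmin z \le x\wedge y$ and therefore $\negmin(x\wedge y)\le z$; this identifies $\negmin(x\wedge y)$ as $\negmin x \vee \negmin y$. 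Property $(6)$ is then obtained by applying $\negmin$ to $(5)$ evaluated at $\negmin x,\negmin y$.

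Next I would treat the multiplicative facts. For $(2)$, monotonicity of $\cdot$ gives $x\cdot y \le (x\vee y)\cdot(x\vee y)$ and $(x\wedge y)\cdot(x\wedge y)\le x\cdot y$, and idempotence collapses the squared terms to $x\vee y$ and $x\wedge y$ respectively, so $x\wedge y \le x\cdot y \le x\vee y$. Properties $(3)$ and $(4)$ refine $(2)$ on the cones: if $1\le x,y$ then $x = x\cdot 1 \le x\cdot y$ and $y = 1\cdot y \le x\cdot y$, so $x\vee y \le x\cdot y$ and $(2)$ forces equality; dually if $x,y\le 1$ then $x\cdot y \le x\cdot 1 = x$ and $x\cdot y \le 1\cdot y = y$, giving $x\cdot y \le x\wedge y$ and again equality by $(2)$.

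For $(7)$, the equation $\negmin 1 = 0$ is the definition of $0$ and $\negmin 0 = \negmin\negmin 1 = 1$ is involution, so the only point needing an argument is $0\le 1$. Here I would use the one step that requires more than monotonicity and involution: instantiating $y=x$ in the displayed equivalence and using idempotence yields $x \le \negmin x \iff x \le 0$, and taking $x=0$ makes the right-hand side $0 \le 0$, which is true, so $0 \le \negmin 0 = 1$. I expect this last point to be the only mild obstacle, being the single item not immediately visible from monotonicity and the involution laws; the remaining six are routine order-theoretic manipulations, which is presumably why the authors state the lemma without proof.
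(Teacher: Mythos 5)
Your proof is correct. The paper states this lemma explicitly \emph{without} proof, so there is nothing to compare against; your derivation --- order-reversal of ${\negmin}$ from the residuation equivalence, De Morgan from the antitone involution, the squaring trick with idempotence for $(2)$--$(4)$, and in particular the one non-obvious step, getting $0 \leq 1$ by instantiating $y=x$ in $x \leq \negmin y \iff x\cdot y \leq 0$ and using $0\cdot 0 = 0$ --- is exactly the routine argument the authors presumably had in mind.
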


\section{Boolean Partition}\label{sec:partition}

In this section, we show that any $\alg{A} \in \CIdInRL$ can be partitioned into Boolean algebras such that these Boolean algebras form a distributive lattice. For each $x \in A$, we define elements $1_x \coloneqq x \vee \negmin x$ and $0_x \coloneqq x \wedge \negmin x$ and consider the interval $B_x \coloneqq \{ y \in A \mid 0_x \mleq y \mleq 1_x \}$.
Some parts of the following lemmas appeared previously in \cite[Prop. 10]{R07} and \cite[Section 2]{Je20} using the notation $|x|=\tau(x)=x\to x$ for $1_x$. For the sake of completeness we reprove them here. The sets $B_x$
also occur in \cite{Je20} as $X_{\tau = x}$ in Def. 5.1, where this concept is applied to involutive FL$_e$-chains. In that setting $B_x$ has cardinality $\le 2$ whereas it follows from Theorem~\ref{BA} below that in the non-linearly-ordered idempotent setting $B_x$ can be any Boolean algebra.

\begin{lemma}\label{lem:facts01}
Let $\alg{A} \in \CIdInRL$. Then for each $x \in A$,
	\begin{enumerate}
		\item $\negmin 0_x = 1_x$;
		\item $0_x = x \cdot \negmin x$;
		\item $0_x \leq 0 \leq 1 \leq 1_x$ and $0_x \mleq 1_x$;
		\item for $y,z \in B_x$, $y \cdot z = y \wedge z$.
	\end{enumerate}
\end{lemma}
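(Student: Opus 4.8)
The plan is to prove each of the four items in turn, relying on the basic lemma already established and on the De Morgan laws for the involution.

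For item (1), I would compute directly: $\negmin 0_x = \negmin(x \wedge \negmin x)$, and by property (5) this equals $\negmin x \vee \negmin\negmin x$. Since $\alg{A}$ is involutive (and commutative, hence cyclic, so $\negmin$ and $\negsim$ coincide as mutually inverse involutions), $\negmin\negmin x = x$, giving $\negmin x \vee x = 1_x$. For item (3), I would first observe that $x \wedge \negmin x \leq x \vee \negmin x$ trivially, and then use property (7) together with monotonicity: from $0 \leq 1$ and the definitions I would show $0_x \leq 0$ by checking $x \wedge \negmin x \leq 0$, which follows from the involutive equivalence $x \leq \negmin y \iff x \cdot y \leq 0$; similarly $1 \leq 1_x$. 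The monoidal comparison $0_x \mleq 1_x$ should follow because $0_x \leq 1 \leq 1_x$ places both in appropriate cones, so that by properties (3) and (4) their fusion computes as a lattice meet or join, letting me verify $0_x \cdot 1_x = 0_x$.

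For item (2), the claim $0_x = x \cdot \negmin x$ is the key identity linking the lattice meet with fusion. Here I would use property (2), $x \wedge \negmin x \leq x \cdot \negmin x \leq x \vee \negmin x$, to sandwich $x \cdot \negmin x$ between $0_x$ and $1_x$. To pin it down to exactly $0_x$, I would use idempotence and the involutive equivalence: since $x \cdot \negmin x \leq 0$ (again from $x \leq \negmin x$ being equivalent to $x \cdot x \leq 0$, suitably applied with $y = x$), I expect $x \cdot \negmin x$ to be a negative element, and combined with the lower bound $0_x \leq x \cdot \negmin x$ and a matching upper bound argument I would force equality. This item is where I expect the main obstacle to lie, since it requires carefully exploiting the interaction of idempotence, involution, and the residuation equivalence rather than just De Morgan manipulations.

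For item (4), given $y, z \in B_x$, so that $0_x \mleq y, z \mleq 1_x$ in the monoidal order, I would argue that the interval $B_x$ behaves like a Boolean algebra in which fusion coincides with lattice meet. The natural approach is to show that every element of $B_x$ lies suitably between $0$ and $1$ (or is otherwise comparable in a way that forces the fusion-equals-meet behavior via properties (3) and (4)), and then invoke idempotence and commutativity: since $\mleq$ is the meet-semilattice order induced by $\cdot$, we have $y \cdot z = y \wedge_{\mleq} z$, so the content is to identify this monoidal meet with the lattice meet $y \wedge z$ on the interval $B_x$. I would use item (2) and the bounds from item (3) to show that within $B_x$ the two orders agree, completing the argument.
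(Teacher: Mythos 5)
Your item (1) matches the paper's proof, and your sandwich $0_x \leq x \cdot \negmin x \leq 1_x$ is the right start for (2), but (2) stops exactly at the crucial point: knowing $0_x \leq x \cdot \negmin x$ and $x \cdot \negmin x \leq 0$ does not force equality with $0_x$, since there may be many negative elements above $0_x$. The missing idea is to feed idempotence into residuation: from $(x \cdot \negmin x) \cdot \negmin x = x \cdot \negmin x \leq 0$ one gets $x \cdot \negmin x \leq \negmin \negmin x = x$, and symmetrically $x \cdot \negmin x \leq \negmin x$, hence $x \cdot \negmin x \leq x \wedge \negmin x = 0_x$. Also note a small slip in how you obtain $x \cdot \negmin x \leq 0$: the instance you cite ($x \leq \negmin x \iff x \cdot x \leq 0$) is an equivalence whose left side fails in general; the correct instance is $\negmin x \leq \negmin x \iff \negmin x \cdot x \leq 0$, whose left side is trivially true.

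More seriously, both your (3) and (4) lean on the cone properties (``if $x,y \in A^+$ then $x \cdot y = x \vee y$'', and dually for $A^-$) in situations where they cannot apply. For $0_x \mleq 1_x$, the elements $0_x$ and $1_x$ lie in \emph{opposite} cones ($0_x \leq 0$ and $1 \leq 1_x$), and the fusion of a negative with a positive element is in general neither their meet nor their join; the paper instead proves (2) first and then computes $0_x \cdot 1_x = 0_x \cdot (x \vee \negmin x) = (0_x \cdot x) \vee (0_x \cdot \negmin x) = 0_x$, using distributivity of $\cdot$ over $\vee$ and $0_x \cdot x = x \cdot \negmin x \cdot x = 0_x$ — so the order of the items matters, and your plan to do (3) before (2) breaks down at this step. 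For (4), elements of $B_x$ need not lie in either cone at all (in $\alg{A}_1$ of Figure~\ref{fig:A1}, $a \in B_a$ is incomparable to both $0$ and $1$), and your plan to ``show the two orders agree on $B_x$'' is circular: that agreement is precisely the content of item (4), as the paper remarks right after the lemma. The actual argument converts the monoidal bound into a lattice bound via residuation and item (1): $0_x \mleq y$ gives $0_x \cdot y = 0_x \leq 0$, hence $y \leq \negmin 0_x = 1_x$; then $y \cdot z \leq 1_x \cdot z = z$ using $z \mleq 1_x$, symmetrically $y \cdot z \leq y$, so $y \cdot z \leq y \wedge z$, the reverse inequality being idempotence. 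This residuation trick is the idea your proposal is missing in both (3) and (4).
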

\begin{proof}
(1) By De Morgan laws.

(2) The inequality $0_x \leq x \cdot \negmin x$ follows by idempotence. For the other direction, note that from $\negmin x \leq \negmin x$ we obtain $x \cdot \negmin x \cdot \negmin x = x \cdot \negmin x \leq 0$ and thus $x \cdot \negmin x \leq x$ by residuation. Similarly, $x \cdot \negmin x \leq \negmin x$ and therefore $x \cdot \negmin x \leq 0_x$.

(3) By (2) and residuation, we obtain $0_x = x \cdot \negmin x \leq 0$ and so $1 = \negmin 0 \leq \negmin 0_x = 1_x$ by (1).
Moreover, $0_x \cdot 1_x = 0_x \cdot (x \vee \negmin x) = (0_x \cdot x) \vee (0_x \cdot \negmin x) = 0_x \vee 0_x = 0_x$.

(4) Consider any $y,z \in B_x$. The inequality $y \wedge z \leq y \cdot z$ follows by idempotence. For the other inequality, note that as $0_x \cdot y = 0_x \leq 0$ by (3) and $y \in B_x$, we obtain $y \leq \negmin 0_x = 1_x$ and so $y \cdot z \leq 1_x \cdot z = z$ as $z \in B_x$. Analogously, we get $y \cdot z \leq y$ and so $y \cdot z \leq y \wedge z$.
\end{proof}

Note that by (1) and (2) of the Lemma~\ref{lem:facts01} above, $1_x = -(x \cdot \negmin x) = x \to x$ for each $x \in A$. Moreover, by (4) of the same lemma, inside any interval $B_x$ the lattice and monoidal order coincide. 
That is, for $y, z \in B_x$, $y \mleq z$ if and only if $y \leq z$. The following lemma establishes that each $B_x$ is closed under involution.

\begin{lemma}\label{lem:intervalnegclosure}
Let $\alg{A} \in \CIdInRL$ and $x \in A$.
	\begin{enumerate}
		\item For all $y \in B_x$, $\negmin y = y \to 0_x$.
		\item For all $y \in B_x$, $\negmin y \in B_x$.
		\item For all $y \in B_x$, $0_y = 0_x$.
	\end{enumerate}
\end{lemma}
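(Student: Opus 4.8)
The plan is to handle the three parts in the order (1), (2), (3), since (1) is the key identity on which the others rest. Throughout I would use that commutativity forces $\negsim = \negmin$, so the residual-definability formula recorded in the preliminaries reads $a \to b = \negmin(\negmin b \cdot a)$, and that $y \in B_x$ means precisely $0_x \cdot y = 0_x$ and $y \cdot 1_x = y$.

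For part (1) I would simply compute $y \to 0_x$ with this formula: $y \to 0_x = \negmin(\negmin 0_x \cdot y) = \negmin(1_x \cdot y)$ by Lemma~\ref{lem:facts01}(1), and since $y \mleq 1_x$ gives $1_x \cdot y = y$, this collapses to $\negmin y$. So part (1) is essentially immediate once the definability formula is in hand.

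Before part (2) I would record the lattice-order bounds $0_x \le y \le 1_x$ for $y \in B_x$: the upper bound follows from $0_x \cdot y = 0_x \le 0$ by residuation (giving $y \le \negmin 0_x = 1_x$), and the lower bound from $0_x \in B_x$ (by Lemma~\ref{lem:facts01}(3)) together with Lemma~\ref{lem:facts01}(4), which turns $0_x \cdot y = 0_x$ into $0_x \wedge y = 0_x$. Antitonicity of $\negmin$ then yields $0_x \le \negmin y \le 1_x$ as well. For part (2) I must verify the two monoidal conditions for $\negmin y$. The condition $0_x \cdot \negmin y = 0_x$ drops out of these lattice bounds by the sandwich $0_x = 0_x \cdot 0_x \le 0_x \cdot \negmin y \le 0_x \cdot 1_x = 0_x$, using $0_x \cdot 1_x = 0_x$ from Lemma~\ref{lem:facts01}(3). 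The remaining condition $\negmin y \cdot 1_x = \negmin y$ is the main obstacle: the monoidal order does not interact with $\negmin$ as transparently as the lattice order does, so a sandwich argument is not available. Here I would again lean on definability, writing $\negmin y \cdot 1_x = \negmin(1_x \to y)$ and reducing to $1_x \to y = y$. The inequality $1_x \to y \le y$ holds since $1 \le 1_x$ and $\to$ is antitone in its first argument, so $1_x \to y \le 1 \to y = y$, while $y \le 1_x \to y$ is equivalent by residuation to $1_x \cdot y \le y$, which holds because $y \mleq 1_x$. Hence $\negmin y \cdot 1_x = \negmin y$ and $\negmin y \in B_x$.

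Finally, part (3) follows from the pieces already assembled. For $0_x \le 0_y$ I use $0_x \le y$ and $0_x \le \negmin y$ from the lattice bounds, so $0_x \le y \wedge \negmin y = 0_y$. For the reverse, I rewrite $0_y = y \cdot \negmin y$ by Lemma~\ref{lem:facts01}(2) and substitute part (1) to obtain $0_y = y \cdot (y \to 0_x) \le 0_x$ by residuation. I expect the only genuine friction to be the monoidal identity $\negmin y \cdot 1_x = \negmin y$ in part (2); everything else is bookkeeping with the residuation equivalences and Lemma~\ref{lem:facts01}.
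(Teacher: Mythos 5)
Your proof is correct, but it reaches the conclusion by a genuinely different mechanism than the paper's. For (1), the paper proves the two inequalities $\negmin y \leq y \to 0_x$ and $y \to 0_x \leq \negmin y$ by separate residuation arguments, whereas you get the identity in one line from the involutive definability $a \to b = \negmin(\negmin b \cdot a)$ (recorded in the preliminaries) together with $y \cdot 1_x = y$. The same formula lets you recast what you correctly identify as the crux of (2), the monoidal condition $\negmin y \cdot 1_x = \negmin y$, as the order-theoretic identity $1_x \to y = y$, which you settle by antitonicity of $\to$ in its first argument and residuation; the paper instead works throughout with the term $y \to 0_x$ and verifies $0_x \mleq y \to 0_x$ and $y \to 0_x \mleq 1_x$ by direct product computations, never invoking the semiring presentation. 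A second structural difference: you isolate the lattice bounds $0_x \leq y \leq 1_x$ (via Lemma~\ref{lem:facts01}(3),(4), using $0_x \in B_x$) and hence $0_x \leq \negmin y \leq 1_x$ by antitonicity of $\negmin$, as a standing observation; this turns the condition $0_x \mleq \negmin y$ into a sandwich argument and makes your part (3) independent of part (2), whereas the paper obtains $0_x \leq y$ in (3) only by first appealing to (2) and residuating $0_x \cdot \negmin y = 0_x \leq 0$. What the paper's route buys is self-containment: every step is a bare residuation computation on one term. What yours buys is brevity and a slightly cleaner dependency graph, at the modest cost of importing the definability formula and standard monotonicity/antitonicity facts, all of which are legitimately available from the preliminaries.
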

\begin{proof}
(1) Using Lemma~\ref{lem:facts01}(1), $y \mleq 1_x$ yields that $y \to 0_x = \negmin (y \cdot \negmin 0_x) = \negmin (y \cdot 1_x) = \negmin y$.
		
(2) We first show that $0_x \mleq y \to 0_x$. Note that $0_x \cdot (y \to 0_x) = 0_x \cdot y \cdot (y \to 0_x) \leq 0_x \cdot 0_x = 0_x$. Moreover, $y \cdot 0_x = 0_x \leq 0_x$, i.e. $0_x \leq y \to 0_x$. This gives $0_x = 0_x \cdot 0_x \leq 0_x \cdot (y \to 0_x)$. Hence $0_x \cdot (y \to 0_x) = 0_x$, i.e. $0_x \mleq y \to 0_x$.
		
\noindent Secondly, we show that $y \to 0_x \mleq 1_x$. From $y \cdot (y \to 0_x) \cdot 1_x \leq 0_x \cdot 1_x = 0_x$, we obtain $(y \to 0_x) \cdot 1_x \leq y \to 0_x$. Moreover, $y \to 0_x = (y \to 0_x) \cdot 1 \leq (y \to 0_x) \cdot 1_x$. Therefore, $(y \to 0_x) \cdot 1_x = y \to 0_x$ and hence $y \to 0_x \mleq 1_x$. By (1), we are done.
		
(3) Reasoning as in (1), we have $0_y = \negmin y \cdot y \leq 0_x$. Now, as $y \in B_x$, we have $0_x \cdot y = 0_x \leq 0$ and so $0_x \leq \negmin y$. By (2), we also have $\negmin y \in B_x$ and so $0_x \cdot \negmin y = 0_x \leq 0$, i.e. $0_x \leq y$. Therefore, $0_x \leq y \wedge \negmin y = 0_y$.
\end{proof}

\begin{theorem}\label{BA}
Let $\alg{A} \in \CIdInRL$ and $x \in A$. Then $\langle B_x, \cdot, \vee, {\negmin}, 0_x, 1_x \rangle$ is a Boolean algebra.
\end{theorem}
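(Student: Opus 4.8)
The plan is to verify the three defining features of a Boolean algebra in turn: that $\langle B_x, \cdot, \vee, 0_x, 1_x\rangle$ is a bounded lattice whose meet is $\cdot$ (recall $\cdot = \wedge$ on $B_x$ by Lemma~\ref{lem:facts01}(4)), that it is distributive, and that $\negmin$ is a complementation. Since the ambient lattice $\langle A, \wedge, \vee\rangle$ need not be distributive, distributivity cannot simply be inherited from $A$, and this is the step I expect to carry the real content; I would obtain it by showing that $B_x$ is in fact a Heyting algebra.

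First I would establish closure under the relevant operations and identify the bounds. Closure under $\cdot$ is immediate: $\mleq$ is a meet-semilattice order with meet $\cdot$, so $0_x \mleq y$ and $0_x \mleq z$ give $0_x \mleq y\cdot z$, while $y \cdot z \mleq y \mleq 1_x$. Closure under $\negmin$ is Lemma~\ref{lem:intervalnegclosure}(2), and closure under $\vee$ then follows from the De~Morgan law, since $y \vee z = \negmin(\negmin y \wedge \negmin z) = \negmin(\negmin y \cdot \negmin z)$ with $\negmin y \cdot \negmin z \in B_x$. As meets and joins of elements of $B_x$ are therefore computed exactly as in $A$, the structure $\langle B_x, \cdot, \vee\rangle$ is a sublattice; and because the two orders coincide on $B_x$ (the remark following Lemma~\ref{lem:facts01}), the defining inequalities $0_x \leq y \leq 1_x$ hold in the lattice order, so $0_x$ and $1_x$ are the bottom and top.

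For distributivity, the key observation is that the ambient residual of two elements of $B_x$ stays inside $B_x$ and takes a transparent form. Using the involutive definition of the residual together with cyclicity, for $y,z \in B_x$ we have $y \to z = \negmin(\negmin z \cdot y)$; since $\negmin z, y \in B_x$, Lemma~\ref{lem:facts01}(4) turns $\negmin z \cdot y$ into $\negmin z \wedge y$, and a De~Morgan step gives $y \to z = \negmin y \vee z \in B_x$. Restricting the ambient residuation law $w \cdot y \leq z \iff w \leq y \to z$ to $w \in B_x$, where $w \cdot y = w \wedge y$, yields the adjunction $w \wedge y \leq z \iff w \leq \negmin y \vee z$ for all $w,y,z \in B_x$. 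Thus $\negmin y \vee z$ is the relative pseudocomplement of $y$ and $z$ in $B_x$, so $B_x$ is a Heyting algebra and in particular a distributive lattice.

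Finally I would check that $\negmin$ is a complementation. For $y \in B_x$, Lemma~\ref{lem:facts01}(4) and the definition of $0_y$ give $y \wedge \negmin y = y \cdot \negmin y = 0_y$, while Lemma~\ref{lem:intervalnegclosure}(3) gives $0_y = 0_x$; dually $y \vee \negmin y = 1_y = \negmin 0_y = \negmin 0_x = 1_x$ by Lemma~\ref{lem:facts01}(1). Hence every element of the bounded distributive lattice $B_x$ has $\negmin y$ as a complement, and a complemented distributive lattice is a Boolean algebra with $\negmin$ as its (necessarily unique) complementation. As indicated, the only non-routine point is distributivity, which the Heyting-algebra argument delivers.
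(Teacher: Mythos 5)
Your proof is correct, but it reaches distributivity by a genuinely different route than the paper. The paper's proof shares your closure argument verbatim, but then uses Lemma~\ref{lem:intervalnegclosure}(1) only to conclude that $\langle B_x, \cdot, \negmin, 0_x \rangle$ is a \emph{pseudocomplemented} semilattice (that is, $y \cdot z = 0_x$ if and only if $z \mleq \negmin y$) and finishes by citing Frink's theorem \cite{F41}: a pseudocomplemented lattice satisfying $\negmin \negmin y = y$ is a Boolean algebra. Pseudocomplementation alone does not yield distributivity, so the appeal to Frink is precisely what carries that step in the paper. You instead prove the stronger fact that $B_x$ is \emph{relatively} pseudocomplemented: the residual of two elements of $B_x$ stays in $B_x$ and takes the form $y \to z = \negmin y \vee z$, so restricting the ambient residuation law gives the Heyting adjunction $w \wedge y \leq z \iff w \leq \negmin y \vee z$ for $w,y,z \in B_x$, from which distributivity follows internally; you then verify complementation directly via $y \wedge \negmin y = 0_y = 0_x$ (Lemma~\ref{lem:intervalnegclosure}(3)) and its dual, and invoke the elementary fact that a complemented distributive lattice is Boolean. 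What your approach buys is self-containedness (no external citation) and an explicit description of the Boolean implication on $B_x$ as $\negmin y \vee z$; what the paper's approach buys is brevity, outsourcing the ``pseudocomplemented plus involutive implies Boolean'' content to a classical result. Both arguments ultimately rest on the same mechanism --- residuation restricted to $B_x$ combined with involutivity --- so the divergence is one of packaging rather than substance.
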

\begin{proof}
	First we observe that $B_x$ is closed under all the operations. It is closed under ${\negmin}$ by Lemma~\ref{lem:intervalnegclosure}(2). Since $\cdot$ is the meet for $\mleq$, $B_x$ is closed under $\cdot$. Closure under $\vee$ then follows by De Morgan and Lemma~\ref{lem:facts01}. By Lemma~\ref{lem:intervalnegclosure}(1) $\langle B_x, \cdot, \negmin, 0_x \rangle$ is a pseudocomplemented semilattice, i.e. $y \cdot z = 0_x$ if and only if $z \mleq \negmin y$ for any $y,z \in B_x$.	
By \cite{F41}, a pseudocomplemented lattice satisfying $\negmin \negmin x= x$ is a Boolean algebra. 
So indeed, $\langle B_x, \cdot, \vee, {\negmin}, 0_x, 1_x \rangle$ is a Boolean algebra.
\end{proof}

\begin{proposition}
Let $\alg{A} \in \CIdInRL$. Then the collection $\{ B_x \mid x \in A \}$ partitions $A$.
\end{proposition}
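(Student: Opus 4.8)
The plan is to verify the two defining conditions of a partition: that every element of $A$ lies in some block, and that any two distinct blocks are disjoint. The blocks are the intervals $B_x = \{ y \in A \mid 0_x \mleq y \mleq 1_x \}$, defined relative to the \emph{monoidal} order $\mleq$, and keeping track of which order is in play will be essential.

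First I would establish coverage by showing that each $x \in A$ belongs to its own interval, i.e. that $0_x \mleq x \mleq 1_x$. These inequalities must be checked in $\mleq$, not in the lattice order $\leq$. For $0_x \mleq x$, I would invoke the identity $0_x = x \cdot \negmin x$ from Lemma~\ref{lem:facts01}(2) and compute, using idempotence and commutativity, that $0_x \cdot x = x \cdot \negmin x \cdot x = x \cdot \negmin x = 0_x$. For $x \mleq 1_x$, distributing fusion over join gives $x \cdot 1_x = (x \cdot x) \vee (x \cdot \negmin x) = x \vee 0_x$, and since $0_x = x \wedge \negmin x \leq x$ in the lattice order we have $x \vee 0_x = x$, so $x \cdot 1_x = x$. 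Hence $x \in B_x$, and therefore $A = \bigcup_{x \in A} B_x$.

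For the disjoint-or-equal condition, the crucial ingredient is Lemma~\ref{lem:intervalnegclosure}(3), which states $0_y = 0_x$ for every $y \in B_x$. Suppose $z \in B_x \cap B_y$. Then $z \in B_x$ gives $0_z = 0_x$, while $z \in B_y$ gives $0_z = 0_y$, so $0_x = 0_y$. Applying Lemma~\ref{lem:facts01}(1), which yields $1_w = \negmin 0_w$, we obtain $1_x = \negmin 0_x = \negmin 0_y = 1_y$. Since each block is completely determined by its two endpoints, $B_x = \{ w \mid 0_x \mleq w \mleq 1_x \} = \{ w \mid 0_y \mleq w \mleq 1_y \} = B_y$. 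Thus any two blocks that share an element coincide, so distinct blocks are disjoint, and the collection $\{ B_x \mid x \in A \}$ partitions $A$.

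The only genuinely delicate point is establishing coverage correctly in the monoidal order: one must resist reasoning with the lattice order, in which $0_x \leq x \leq 1_x$ is immediate by definition but is not what membership in $B_x$ demands. Once Lemma~\ref{lem:intervalnegclosure}(3) is available, the disjoint-or-equal half is essentially automatic, since that lemma already encodes the invariance of the bottom endpoint $0_x$ across the entire block.
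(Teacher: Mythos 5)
Your proof is correct and takes essentially the same route as the paper's: coverage via showing $x \in B_x$ directly, and the disjoint-or-equal half via Lemma~\ref{lem:intervalnegclosure}(3) together with $1_x = \negmin 0_x$. The only (minor) variation is the step $x \mleq 1_x$, which you obtain by distributing fusion over the join to get $x \cdot 1_x = x \vee 0_x = x$, whereas the paper derives $x \cdot 1_x \leq x$ by residuation from $\negmin x \cdot x \cdot 1_x \leq 0$; both arguments are valid.
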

\begin{proof}
Note that by Lemma~\ref{lem:intervalnegclosure}(3), we have that for all $x, y \in A$, if $y \in B_x$, then $B_x = B_y$.  We are left to show that $x \in B_x$.  By idempotence, we have that $0_x = x \cdot \negmin x \mleq x$. Moreover, $\negmin x \cdot x \cdot 1_x = 0_x \cdot 1_x = 0_x \leq 0$ implies that $x \cdot 1_x \leq x$ and $1 \leq 1_x$ implies that $x \leq x \cdot 1_x$. Hence $x \mleq 1_x$.
\end{proof}

We have now obtained a partition of $A$ in terms of the intervals $\{ B_x \mid x \in A\}$. We show that the set $\{ 0_x \mid x \leq 0 \}$ forms a distributive lattice with respect to the induced order.

\begin{theorem}\label{distriblattice}
Let $\alg{A} \in \CIdInRL$. Then $\{ 0_x \mid x \in A \} = \{ x \in A \mid x \leq 0 \}$ and $\{ 1_x \mid x \in A \} = A^+$. The algebra $\langle \{ 0_x \mid x \in A \}, \cdot, \vee, 0 \rangle$ is a distributive sublattice of $\alg{A}$ with maximum element $0$, and it is dually isomorphic to $\langle A^+,\wedge, \vee, 1\rangle = \langle A^+, \wedge, \cdot, 1 \rangle$.
\end{theorem}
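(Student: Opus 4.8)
The plan is to first identify the two displayed sets with the cones $\{x \mid x \leq 0\}$ and $A^+$, then exploit the fact that fusion collapses to meet on the negative cone to obtain the lattice structure, reduce distributivity to the semiring distribution law, and finally read the dual isomorphism off the De Morgan laws. I would begin with the set equalities. For $\{0_x \mid x \in A\} = \{x \in A \mid x \leq 0\}$, the inclusion $\subseteq$ is immediate from Lemma~\ref{lem:facts01}(3), which gives $0_x \leq 0$. For $\supseteq$, given $x \leq 0$ I would note $x \leq 0 \leq 1 = \negmin 0 \leq \negmin x$, so $x = x \wedge \negmin x = 0_x$; thus every element below $0$ is its own $0_x$. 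The companion equality $\{1_x \mid x \in A\} = A^+$ then follows most cleanly by involution: since $1_x = \negmin 0_x$ by Lemma~\ref{lem:facts01}(1) and $\negmin$ is an order-reversing bijection, it sends $\{x \leq 0\}$ onto $\{x \geq 1\} = A^+$.

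Next I would establish the sublattice structure. Writing $L \coloneqq \{0_x \mid x \in A\} = \{x \in A \mid x \leq 0\}$, note that $L \subseteq A^-$, so fusion restricted to $L$ is meet, i.e.\ $y \cdot z = y \wedge z$ for $y,z \in L$ (by the elementary property that $y,z \in A^-$ implies $y \cdot z = y \wedge z$). Moreover $L$ is closed under both $\wedge$ and $\vee$ in $\langle A,\leq\rangle$, because $0$ is an upper bound for $L$: if $y,z \leq 0$ then $y \wedge z \leq 0$ and $y \vee z \leq 0$. Hence $\langle L, \cdot, \vee\rangle = \langle L, \wedge, \vee\rangle$ is a sublattice of $\alg{A}$, with maximum $0 = 0_0 \in L$.

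The step I expect to require the most care is distributivity, since the ambient lattice $\langle A,\leq\rangle$ is in general not distributive (as Example~\ref{ex:sugihara} records for $\alg{A}_1$), so I cannot simply inherit it. The resolution is to transfer the problem to fusion: $\cdot$ distributes over $\vee$ in every InRL-semiring, and on $L$ fusion coincides with meet while $b \vee c \in L$ whenever $b,c \in L$. Therefore, for any $a,b,c \in L$, $a \wedge (b \vee c) = a \cdot (b \vee c) = (a \cdot b) \vee (a \cdot c) = (a \wedge b) \vee (a \wedge c)$, so lattice-distributivity on $L$ is exactly the semiring distribution law read through the identity $\cdot = \wedge$.

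Finally, for the dual isomorphism I would consider the map $\negmin \colon L \to A^+$. It is a bijection because $\negmin$ is involutive, it genuinely lands in $A^+$ since $y \leq 0$ forces $1 \leq \negmin y$, and it sends the maximum $0$ of $L$ to the minimum $1 = \negmin 0$ of $A^+$. The De Morgan laws $\negmin(y \wedge z) = \negmin y \vee \negmin z$ and $\negmin(y \vee z) = \negmin y \wedge \negmin z$ show that it interchanges meet and join, so it is a lattice anti-isomorphism onto $\langle A^+, \wedge, \vee, 1\rangle$; the dual argument (or transporting along $\negmin$) confirms that $A^+$ is itself closed under $\wedge$ and $\vee$, so the target is a genuine sublattice.
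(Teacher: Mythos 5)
Your proof is correct and takes essentially the same route as the paper's: identify $\{ 0_x \mid x \in A \}$ with $\{ x \in A \mid x \leq 0 \}$, get the companion equality for $\{ 1_x \mid x \in A \}$ via involution, derive distributivity from the coincidence $x \cdot y = x \wedge y$ on the negative cone together with distribution of $\cdot$ over $\vee$, and read the dual isomorphism off the De Morgan laws. The only difference is that you make explicit what the paper compresses (e.g., deducing $0_x = x$ from $x \leq 0 \leq 1 \leq \negmin x$, and spelling out the computation $a \wedge (b \vee c) = a \cdot (b \vee c) = (a \cdot b) \vee (a \cdot c) = (a \wedge b) \vee (a \wedge c)$), which is harmless and correct.
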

\begin{proof}
That $\{ 0_x \mid x \in A \} = \{ x \in A \mid x \leq 0 \}$ holds, follows from $0_x \leq 0$  as well as that $x \leq 0$ implies $x \cdot \negmin x = x$ for all $x \in A$. It then also follows that $\{ 1_x \mid x \in A \} = A^+$. 
	
It is easy to see that $\langle \{ x \in A \mid x \leq 0 \}, \wedge, \vee, 0 \rangle$ is a sublattice of $\alg{A}$ with maximum element $0$. Now note that $x \wedge y = x \cdot y$ for all $x,y \in A^-$ and so $\langle \{ x \in A \mid x \leq 0 \}, \cdot, \vee, 0 \rangle$ is indeed a distributive sublattice of $\alg{A}$ with maximum element $0$. The dual isomorphism follows from the De Morgan laws.
\end{proof}

\begin{example}
The algebra $\alg{A}_1$ in Figure~\ref{fig:A1} contains three Boolean algebras whose universes are $B_a=\{\bot,\top, a, -a\}$, $B_b=\{b,-b,c,-c\}$ and $B_0=\{0,1\}$. Note that $0_a=\bot$ and $0_b=-c$.
\end{example}

Finally, we list a number of properties showing how the distributive lattice $\{ 0_x \mid x \in A \}$ sits inside the algebra $\alg{A}$.

\begin{lemma}\label{lem:monoidalmonoton}
Let $\alg{A} \in \CIdInRL$. For all $x, y \in A$, $x \mleq y$ implies $0_x \mleq 0_y$.
\end{lemma}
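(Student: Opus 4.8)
The plan is to reduce this statement about the monoidal order to an inequality in the lattice order and then transfer it back. By Theorem~\ref{distriblattice} both $0_x$ and $0_y$ lie in $\{z \in A \mid z \leq 0\}$, and on this set (indeed on all of $A^-$) fusion coincides with meet; hence $0_x \mleq 0_y$ is equivalent to the plain lattice inequality $0_x \leq 0_y$. Since $0_y = y \wedge \negmin y$, it therefore suffices to establish the two lattice inequalities $0_x \leq y$ and $0_x \leq \negmin y$, and then to recombine them.

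The engine of the argument is the single identity $0_x \cdot y = 0_x$. Writing $0_x = x \cdot \negmin x$ by Lemma~\ref{lem:facts01}(2) and using that $x \mleq y$ means $x \cdot y = x$, commutativity and associativity give $0_x \cdot y = (x \cdot y) \cdot \negmin x = x \cdot \negmin x = 0_x$. From this I would read off $0_x \leq \negmin y$ at once: as $0_x \cdot y = 0_x \leq 0$, the involutive equivalence $a \leq \negmin b \iff a \cdot b \leq 0$ recorded in the preliminaries applies with $a = 0_x$ and $b = y$. For the companion inequality $0_x \leq y$ I would instead use that fusion is order-preserving together with $0_x \leq 0 \leq 1$, so that $0_x = 0_x \cdot y \leq 1 \cdot y = y$. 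Combining the two yields $0_x \leq y \wedge \negmin y = 0_y$, and transferring back via the coincidence of $\cdot$ and $\wedge$ on negative elements gives $0_x \cdot 0_y = 0_x \wedge 0_y = 0_x$, i.e.\ $0_x \mleq 0_y$.

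The one point to flag is that a purely monoidal attack stalls. Expanding $0_x \cdot 0_y = x \cdot \negmin x \cdot y \cdot \negmin y$ and substituting $x \cdot y = x$ collapses the goal to $0_x \mleq \negmin y$; but proving \emph{that} inequality directly in the monoidal order is circular, since it unwinds to the very equation $0_x \cdot \negmin y = 0_x$ one is trying to reach. Passing to the lattice order is what breaks the loop: there the single monoidal fact $0_x \cdot y = 0_x$ feeds two genuinely different inequalities, $0_x \leq \negmin y$ (through the involution law) and $0_x \leq y$ (through monotonicity of fusion, which is the only step using lattice information beyond the reduction itself).
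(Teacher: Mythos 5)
Your proof is correct and matches the paper's argument essentially step for step: the same reduction of $0_x \mleq 0_y$ to the lattice inequality $0_x \leq 0_y$ via the coincidence of $\cdot$ and $\wedge$ on negative elements, the same key identity $y \cdot 0_x = 0_x$ obtained from $x \cdot y = x$, and the same two conclusions $0_x \leq \negmin y$ (by residuation) and $0_x \leq y$ (by monotonicity of fusion). Your closing remark about why a purely monoidal argument stalls is a nice observation, but the substance of the proof is the paper's own.
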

\begin{proof}
Consider $x,y \in A$ such that $x \mleq y$. It suffices to show that $0_x \leq 0_y$, as $0_x \cdot 0_y = 0_x \wedge 0_y$. Since $x \mleq y$, we have $y \cdot 0_x = y \cdot x \cdot \negmin x = x \cdot \negmin x = 0_x$. Therefore, $0_x \leq \negmin y$ by $0_x \cdot y = 0_x \leq 0$ and residuation, and $0_x = 0_x \cdot y \leq 1 \cdot y = y$. So $0_x \leq y \wedge \negmin y = 0_y$, as required.
\end{proof}

\begin{lemma}\label{lem:fusioncongruence}
Let $\alg{A} \in \CIdInRL$ and $x,y \in A$. Then the following properties hold:
	\begin{enumerate}
		\item $0_x \cdot 0_y = 0_{x \cdot y}$;
		\item $1_x \cdot 1_y = 1_{x \cdot y}$.
	\end{enumerate}
\end{lemma}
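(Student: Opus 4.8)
The plan is to prove item (1) first and then derive (2) from it via the De~Morgan laws, rather than attacking the two statements symmetrically. The asymmetry is deliberate: $\negmin$ interacts well with $\vee$ and $\wedge$ but is \emph{not} a homomorphism for $\cdot$, so there is no direct ``negate everything'' passage between (1) and (2), and each must be anchored to the other through the operations that $\negmin$ does respect.

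For (1) I would work entirely in the monoidal order $\mleq$, exploiting that $0_x \cdot 0_y$ is by definition the $\mleq$-meet of $0_x$ and $0_y$. The ingredients are the basic facts $x \cdot y \mleq x$ and $x \cdot y \mleq y$ (immediate from idempotence and commutativity) together with $0_x \mleq x$ (from $0_x = x \cdot \negmin x$, Lemma~\ref{lem:facts01}(2)). For the inequality $0_{x \cdot y} \mleq 0_x \cdot 0_y$, apply Lemma~\ref{lem:monoidalmonoton} to $x \cdot y \mleq x$ and to $x \cdot y \mleq y$, obtaining $0_{x \cdot y} \mleq 0_x$ and $0_{x \cdot y} \mleq 0_y$; since $0_x \cdot 0_y$ is the $\mleq$-meet, this yields $0_{x \cdot y} \mleq 0_x \cdot 0_y$. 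For the reverse inequality the key observation is that $0_x \cdot 0_y \leq 0$, so by Theorem~\ref{distriblattice} it is a fixed point of the map $z \mapsto 0_z$, i.e. $0_{0_x \cdot 0_y} = 0_x \cdot 0_y$. Chaining $0_x \cdot 0_y \mleq 0_x \mleq x$ and likewise $0_x \cdot 0_y \mleq y$ gives $0_x \cdot 0_y \mleq x \cdot y$, and one more application of Lemma~\ref{lem:monoidalmonoton} delivers $0_x \cdot 0_y = 0_{0_x \cdot 0_y} \mleq 0_{x \cdot y}$. The two inequalities combine to the equality.

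For (2) I would reduce to (1) by computing $\negmin(1_x \cdot 1_y)$. Since $1_x, 1_y \in A^+$ we have $1_x \cdot 1_y = 1_x \vee 1_y$, and the De~Morgan law gives $\negmin(1_x \vee 1_y) = \negmin 1_x \wedge \negmin 1_y = 0_x \wedge 0_y$, using $\negmin 1_z = 0_z$ (the involution of Lemma~\ref{lem:facts01}(1)). As $0_x, 0_y \in A^-$ this meet equals $0_x \cdot 0_y$, which is $0_{x \cdot y} = \negmin 1_{x \cdot y}$ by part (1). Hence $\negmin(1_x \cdot 1_y) = \negmin 1_{x \cdot y}$, and applying $\negmin$ once more and using involutivity yields $1_x \cdot 1_y = 1_{x \cdot y}$.

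The only genuinely delicate point is the reverse inequality in (1): a naive attempt to rewrite $\negmin(x \cdot y)$ in terms of $\negmin x$ and $\negmin y$ fails, because fusion is not self-dual under $\negmin$ (already visible in the Boolean fibres, where $\cdot = \wedge$). The device that sidesteps this is the fixed-point identity $0_{0_x \cdot 0_y} = 0_x \cdot 0_y$ combined with the monotonicity Lemma~\ref{lem:monoidalmonoton}, which keeps the whole argument inside the monoidal order and never requires $\negmin$ to commute with $\cdot$.
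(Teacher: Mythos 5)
Your proposal is correct, and part (1) takes a genuinely different route from the paper's, so a comparison is in order. The paper proves (1) by two direct residuation computations: from $\negmin x \cdot \negmin y \cdot x \cdot y \leq 0$ it extracts $\negmin x \cdot \negmin y \leq \negmin (x \cdot y)$, whence $0_x \cdot 0_y \leq 0_{x \cdot y}$; and from $x \cdot y \cdot \negmin(x\cdot y) \leq 0$ it extracts $y \cdot \negmin(x \cdot y) \leq \negmin x$, giving $0_{x \cdot y} \leq 0_x$ and symmetrically $0_{x\cdot y} \leq 0_y$, so $0_{x \cdot y} \leq 0_x \wedge 0_y = 0_x \cdot 0_y$. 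You instead stay entirely inside the monoidal semilattice and reuse two earlier results: Lemma~\ref{lem:monoidalmonoton} (monotonicity of $z \mapsto 0_z$ with respect to $\mleq$) applied to $x \cdot y \mleq x$ and $x \cdot y \mleq y$ for one inequality, and for the converse the fixed-point identity $0_z = z$ for $z \leq 0$ (which is indeed contained in the proof of Theorem~\ref{distriblattice}) applied to $z = 0_x \cdot 0_y$, combined with $0_x \cdot 0_y \mleq x \cdot y$ and monotonicity once more. Every fact you invoke ($x \cdot y \mleq x$, $0_x \mleq x$, fusion being the $\mleq$-meet, $0_x \cdot 0_y \leq 0$) is established before the lemma in the paper, so there is no circularity. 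What each approach buys: the paper's argument is self-contained, using only residuation from first principles; yours is more conceptual and order-theoretic, but note that Lemma~\ref{lem:monoidalmonoton} is itself proved by exactly the kind of residuation computation the paper deploys here, so your route repackages rather than eliminates those steps. For part (2) the paper simply remarks that it follows from (1) and the De Morgan laws; your spelled-out derivation --- $1_x \cdot 1_y = 1_x \vee 1_y$ on the positive cone, De Morgan with $\negmin 1_z = 0_z$, then $0_x \wedge 0_y = 0_x \cdot 0_y$ on the negative cone and involutivity --- is precisely the intended computation, and your opening observation that $\negmin$ is not a $\cdot$-homomorphism correctly explains why (2) must be routed through (1) via $\vee$ and $\wedge$ rather than obtained by a symmetric argument.
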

\begin{proof}
(1) Note that $\negmin x \cdot \negmin y \cdot x \cdot y = \negmin x \cdot x \cdot \negmin y \cdot y \leq 0 \cdot 0 = 0$, so $\negmin x \cdot \negmin y \leq \negmin (x \cdot y)$. Hence $x \cdot \negmin x \cdot y\cdot  \negmin y = x \cdot y \cdot \negmin x \cdot \negmin y \leq x \cdot y \cdot \negmin (x \cdot y)$, i.e. $0_x \cdot 0_y \leq 0_{x \cdot y}$.
	
\noindent For the other direction, note that $y \cdot \negmin (x \cdot y) \leq \negmin x$ as $x \cdot y \cdot \negmin (x \cdot y) \leq 0$. Therefore $x \cdot y \cdot \negmin(x \cdot y) \leq x \cdot \negmin x$, i.e. $0_{x \cdot y} \leq 0_x$. An analogous proof shows that $0_{x \cdot y} \leq 0_y$ and so $0_{x \cdot y} \leq 0_x \wedge 0_y = 0_x \cdot 0_y$.
	
(2) This follows from (1) and the De Morgan laws.
\end{proof}

Note that Lemma~\ref{lem:fusioncongruence} proves that the equivalence relation $\equiv_\alg{A}$ that partitions $A$, defined as $x \equiv_\alg{A} y \vcentcolon\Leftrightarrow 0_x = 0_y$, is compatible with $\cdot$. Compatibility with $\negmin$ follows by each $B_x$ being closed under $\negmin$. One might think that $\equiv_\alg{A}$ is a congruence. This is not the case, however. In particular, $\equiv_\alg{A}$ might fail to be compatible with the join $\vee$, as is witnessed by the algebra $\alg{A}_1$ in Figure~\ref{fig:A1}. Note that $\top \equiv_{\alg{A}_1} a$ as well as $-c \equiv_{\alg{A}_1} -c$. But, $-c \vee \top = \top$ and $-c \vee a = b$, so $-c \vee \top \not\equiv_{\alg{A}_1} -c \vee a$.

\section{Congruences and Monoidal Filters}\label{sec:filters}

In this section, we study the sets $\{ x \in A \mid a \mleq x \}$ for a negative element $a \in A^{-}$. They will be instrumental in the gluing construction given in the next section. To motivate the prominent place these sets have, we show that they arise naturally when studying congruences for the variety $\CIdInRL$.

Consider an algebra $\alg{A} \in \CIdInRL$. We call a subset $S \subseteq A$ \emph{convex} if for all $x,y \in S$, $z \in A$, $x \leq z \leq y$ implies $z \in S$. We say that a subset $S \subseteq A$ is a \emph{0-free subuniverse} of $\alg{A}$ if $S$ is closed under all operations $\wedge, \vee, \cdot, \to$ and $1 \in S$, but does not necessarily contain $0$. Note that this means that $S$ is not necessarily closed under $\negmin$. We say that $S$ is a \emph{pointed subuniverse} if it is a 0-free subuniverse with $0 \in S$. 

It is well-known that for commutative pointed residuated lattices, there exists a one-to-one correspondence between the lattice of congruences $\textsf{Con}(\alg{A})$ and the lattice of convex 0-free subuniverses $\mathcal{C}(\alg{A})$.
Furthermore, such convex 0-free subuniverses in turn correspond to convex submonoids of the negative cone. We denote the lattice of all such convex submonoids of $\alg{A^-}$ by $\mathcal{CM}(\alg{A^-})$. We refer to~\cite{GJKO07} for details.

\begin{theorem}[{\cite[Theorem 3.47]{GJKO07}}]\label{thm:congequiv}
Let $\alg{A} \in \CIdInRL$. Then
\[
\textup{\textsf{Con}}(\alg{A}) \cong \mathcal{C}(\alg{A}) \cong \mathcal{CM}(\alg{A}^-),
\]
as witnessed by the following isomorphisms
\begin{align*}
&\textup{\textsf{Con}}(\alg{A}) \to \mathcal{C}(\alg{A});	&& \Theta \mapsto H_\Theta \coloneqq [1]_\Theta \\
&\mathcal{C}(\alg{A}) \to \textup{\textsf{Con}}(\alg{A});	&& H \mapsto \Theta_H \coloneqq \{ (x, y) \in A^2 \mid \text{there exists } h \in H \\
&	&& \qquad \qquad \qquad \text{such that } h \cdot x \leq y \text{ and } h \cdot y \leq x \} \\
&\mathcal{C}(\alg{A}) \to \mathcal{CM}(\alg{A}^-);		&& H \mapsto S_H \coloneqq H^- \\
&\mathcal{CM}(\alg{A}^-) \to \mathcal{C}(\alg{A});		&& S \mapsto H_S \coloneqq \{ x \in A \mid a \leq x \leq a \to 1 \text{ for some } a \in S \}.
\end{align*}
\end{theorem}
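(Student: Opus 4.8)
The plan is to prove the two displayed isomorphisms separately, in each case exhibiting the stated pair of maps and checking that they are mutually inverse and order preserving. A useful preliminary observation is that, because $\CIdInRL$ is commutative, every convex $0$-free subuniverse is automatically normal, so the general (noncommutative) residuated-lattice theory collapses to exactly the maps above with no conjugation condition needed; I will therefore verify everything by direct computation.

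\textbf{Congruences versus convex $0$-free subuniverses.} For $\Theta \mapsto H_\Theta = [1]_\Theta$ I would check that the $1$-class is a convex $0$-free subuniverse: closure under $\wedge,\vee,\cdot,\to$ is immediate from compatibility of $\Theta$ (for instance $a \equiv_\Theta 1 \equiv_\Theta b$ gives $a \to b \equiv_\Theta 1 \to 1 = 1$), and convexity follows from lattice identities, since $u \le x \le v$ with $u,v \equiv_\Theta 1$ yields $x \wedge 1 \equiv_\Theta u \wedge x = u \equiv_\Theta 1$ and hence $x = x \wedge v \equiv_\Theta x \wedge 1 \equiv_\Theta 1$. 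For $H \mapsto \Theta_H$ the work is to show $\Theta_H$ is a congruence: reflexivity uses $1 \in H$, symmetry is built into the definition, and transitivity uses closure of $H$ under $\cdot$. The key simplification for compatibility is that a witness $h \in H$ may always be replaced by $h \wedge 1 \in H^-$, since $h \wedge 1 \le h$ preserves the defining inequalities; with a negative witness in hand, compatibility with each operation is a one-line residuation argument (e.g. from $hx \le y$ one gets $h\,(w \to x)\,w \le hx \le y$, hence $h\,(w \to x) \le w \to y$).

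The main obstacle, I expect, is verifying that the two composites are the identity. The equality $\Theta_{H_\Theta} = \Theta$ goes one way by converting inequalities into meet-identities (from $hx \le y$ and $h \equiv_\Theta 1$ one deduces $x \equiv_\Theta x \wedge y$, and symmetrically $y \equiv_\Theta x \wedge y$, so $x \equiv_\Theta y$) and the other way by producing the witness $h = (x \to y) \wedge (y \to x) \wedge 1 \in [1]_\Theta$. The equality $H_{\Theta_H} = H$ is precisely where convexity is indispensable: if $(x,1) \in \Theta_H$ then some $h \in H$ satisfies $h \le x \le h \to 1$ with both endpoints in $H$, so convexity forces $x \in H$; conversely, for $h_0 \in H$ the witness $h = (h_0 \to 1) \wedge h_0 \in H$ gives $h \le h_0$ and $h\,h_0 \le (h_0 \to 1)\,h_0 \le 1$, showing $(h_0,1) \in \Theta_H$.

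\textbf{Convex subuniverses versus convex submonoids.} Finally I would show $H \mapsto H^-$ lands in $\mathcal{CM}(\alg{A}^-)$ (a product of negatives is negative, and convexity is inherited) and that $H_S$ is a convex $0$-free subuniverse. The remaining computation is closure of $H_S$ under $\cdot,\wedge,\vee,\to$: given witnesses $a \le x \le a \to 1$ and $b \le y \le b \to 1$, the single element $c = ab \in S$ serves as a witness throughout; for $\to$, for example, $cx = (ax)\,b \le b \le y$ gives $c \le x \to y$, while $(x \to y)\,a \le (x \to y)\,x \le y$ and then $(x \to y)\,ab \le yb \le 1$ gives $x \to y \le c \to 1$. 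The round-trips $H_{H^-} = H$ and $(H_S)^- = S$ then follow from convexity once more: for $x \in H$ the element $a = x \wedge (x \to 1) \wedge 1 \in H^-$ witnesses $x \in H_{H^-}$, and for $x \le 1$ in $H_S$ the witness inequality $a \le x \le 1$ with $a,1 \in S$ forces $x \in S$ by convexity of $S$. Since all four maps are visibly order preserving, the bijections are lattice isomorphisms.
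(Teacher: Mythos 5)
Your proof is correct; note that the paper does not prove this theorem at all but imports it as background via the citation [GJKO07, Theorem 3.47], and your verification is essentially the standard argument from that source, specialized (as you correctly observe) to the commutative case where the normality/conjugation conditions on convex subalgebras become vacuous. All the key devices --- replacing a witness $h$ by the negative witness $h \wedge 1$, the witness $(x \to y) \wedge (y \to x) \wedge 1$ for $\Theta \subseteq \Theta_{H_\Theta}$, the product witness $ab \in S$ for closure of $H_S$, and convexity for the round-trip inclusions --- are exactly those of the textbook proof, so there is nothing to add beyond the minor remark that convexity of $H_S$ itself (via $ab \le x \le b \to 1 \le ab \to 1$) deserves the same one-line check you give for the operations.
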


As was noted by Stanovsk\'y in \cite{S07}, the convex submonoids of the negative cone of a commutative idempotent residuated lattice are exactly the filters of the negative cone. Formally, a subset $F \subseteq A$ of an $\alg{A} \in \CIdInRL$ is called a \emph{filter} if it is upwards closed under the lattice order $\leq$ as well as closed under $\wedge$. Let $\textsf{Fil}(\alg{A}^-)$ denote the lattice of filters on $\alg{A^-}$. The result by Stanovsk\'y then states that $\mathcal{CM}(\alg{A}^-) = \textsf{Fil}(\alg{A}^-)$ and hence, $\textsf{Con}(\alg{A}) \cong \textsf{Fil}(\alg{A}^-)$.

In this work we are particularly interested in the case when $\alg{A}$ is finite. In such case, each filter $F \in \textsf{Fil}(\alg{A}^-)$ is \emph{principal}, i.e., generated by the single element $\bigwedge F$, and we obtain $\textsf{Con}(\alg{A}) \cong (\alg{A}^-)^\partial$, the dual of $\alg{A}^-$. In light of the isomorphisms from Theorem~\ref{thm:congequiv} above, this means in particular that each set $\{ x \in A \mid a \leq x \leq a \to 1 \}$ for $a \in A^-$ corresponds to a congruence. The rest of this section will be dedicated to the study of these intervals. To start off, the following lemma gives two alternative characterizations.

\begin{lemma}\label{joinclosed}
Let $\alg{A} \in \CIdInRL$ and $a \in A^-$. Then for all $x \in A$,
\[
a \leq x \leq a \to 1 \quad \iff \quad a \leq x \leq 1_a \quad \iff \quad a \mleq x.
\]
\end{lemma}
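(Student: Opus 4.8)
The plan is to prove a chain of three equivalences by establishing a cycle of implications. I will show that $a \mleq x$ implies $a \leq x \leq 1_a$, that $a \leq x \leq 1_a$ implies $a \leq x \leq a \to 1$, and finally that $a \leq x \leq a \to 1$ implies $a \mleq x$. Throughout I will use freely that $a \in A^-$, so $a \leq 1$ and hence $0_a = a \cdot \negmin a = a \wedge \negmin a = a$ (by Theorem~\ref{distriblattice}, since $a \leq 0$ need not hold, but the identity $x \cdot \negmin x = x$ holds exactly for $x \leq 0$, so I must be careful — see below). The key structural facts I expect to lean on are Lemma~\ref{lem:facts01} (especially $0_a \mleq 1_a$ and $\negmin 0_a = 1_a$) and Lemma~\ref{lem:monoidalmonoton}.

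First I would handle the equivalence $a \leq x \leq 1_a \iff a \mleq x$. For the forward direction, I would argue that $a \leq x \leq 1_a$ places $x$ in an interval anchored at $a$ and $1_a$; since $1_a = \negmin 0_a$ and $a$ sits below $1_a$ in the monoidal order, I expect to deduce $a \mleq x$ by showing $a \cdot x = a$, using that $x \leq 1_a$ gives $x \cdot a \leq 1_a \cdot a = a$ (noting $a \mleq 1_a$) while $a \leq x$ combined with idempotence and residuation gives the reverse inequality $a \leq a \cdot x$. For the converse, if $a \mleq x$ then $a \cdot x = a$, so $a \leq x$ is immediate (as $a = a\cdot x \le 1 \cdot x = x$), and $x \leq 1_a$ should follow from $a \mleq x$ by monotonicity-type reasoning: $a \cdot x = a \le 0$ when $a \le 0$, giving $x \le \negmin a \le 1_a$; the case analysis on whether $a \le 0$ is where I must take care, since a general negative element need not lie below $0$.

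Next I would prove $a \leq x \leq 1_a \iff a \leq x \leq a \to 1$. Since both chains share the lower bound $a \leq x$, it suffices to show that the upper bounds $x \leq 1_a$ and $x \leq a \to 1$ are equivalent (at least for $x$ already satisfying $a \leq x$, or unconditionally). The natural route is to identify $a \to 1$ directly. Using the definability of residuals, $a \to 1 = a \ld 1 = \negsim(\negmin 1 \cdot a) = \negsim(0 \cdot a)$, and by commutativity/cyclicity this equals $\negmin(a \cdot 0)$. I expect $a \cdot 0$ to simplify: for $a \in A^-$ one has $a \cdot 0 \leq a \cdot 1 = a$ and $a \cdot 0 \ge a \cdot \negmin a = 0_a$-type bounds, and I anticipate $\negmin(a\cdot 0) = 1_a$ outright, so that $a \to 1 = 1_a$. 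Verifying this identity $a \to 1 = 1_a$ cleanly is the linchpin of this half.

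The main obstacle I anticipate is pinning down $a \to 1 = 1_a$ and correctly managing the distinction between "$a$ negative" ($a \le 1$) and "$a$ below $0$" ($a \le 0$). The cleanest argument for $a \to 1 = 1_a$ is probably via residuation: $x \leq a \to 1$ iff $a \cdot x \leq 1$, so I would show $a \cdot x \leq 1 \iff x \leq 1_a$ for all $x$. One direction uses $1_a \cdot a = a \le 1$ (from $a \mleq 1_a$) to get $a \cdot x \le a \le 1$ whenever $x \le 1_a$; the reverse requires showing that $a \cdot x \le 1$ forces $x \le 1_a = \negmin 0_a$, which I would derive by multiplying by $\negmin a$ and invoking $0_a = a \cdot \negmin a$ together with the involutive residuation law $x \le \negmin y \iff x \cdot y \le 0$. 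Once $a \to 1 = 1_a$ is secured, the three-way equivalence closes immediately.
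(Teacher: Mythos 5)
Your proposal is correct and takes essentially the same route as the paper: the paper's proof also pivots on the identity $a \to 1 = 1_a$, obtained by showing $a \cdot 0 = 0_a$ (from $a \leq 1$ one gets $0 \leq \negmin a$, hence $a \cdot 0 \leq a \cdot \negmin a = 0_a$, while $0_a \leq a$ and $0_a \leq 0$ give $0_a \leq a \wedge 0 = a \cdot 0$), and then establishes $a \mleq x \iff a \leq x \leq a \to 1$ by exactly your fusion computations ($a = a \cdot a \leq a \cdot x$, and $a \cdot x \leq 1$ gives $a \cdot x = a \cdot a \cdot x \leq a$). Two small repairs to your hedged steps: monotonicity yields $a \cdot 0 \leq a \cdot \negmin a = 0_a$ (not $\geq$; the reverse bound is the meet argument just quoted), and the direction $a \mleq x \Rightarrow x \leq 1_a$ that worried you for $a \not\leq 0$ needs no case split, since once $a \to 1 = 1_a$ is secured it follows from $a \cdot x = a \leq 1$ by residuation, exactly as you observe at the end (or directly: $x \cdot 0_a = x \cdot a \cdot \negmin a = a \cdot \negmin a = 0_a \leq 0$, so $x \leq \negmin 0_a = 1_a$).
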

\begin{proof}
First note that $0_a = a \cdot 0$. One direction follows as $a \leq 1$ implies $0 \leq \negmin a$ and so $a \cdot 0 \leq a \cdot \negmin a = 0_a$. The other direction follows from the fact that $0_a \leq 0$ and $0_a \leq a$ imply $0_a \leq a \wedge 0 = a \cdot 0$. This means that $1_a = \negmin (a \cdot 0) = a \to 1$.

For the other equivalence, we show that $a \mleq x$ if and only if $a \leq x \leq a \to 1$. Suppose that $a \mleq x$. Then $a = a \cdot x \leq x$ as well as $a \cdot x = a \leq 1$, so $x \leq a \to 1$. For the opposite direction, suppose that $a \leq x \leq a \to 1$. From $a \leq x$ we obtain that $a = a \cdot a \leq a \cdot x$. From $x \leq a \to 1$, we obtain $a \cdot x \leq 1$ and so $a \cdot x = a \cdot a \cdot x \leq a$. Hence, $a \cdot x = a$ and so $a \mleq x$.
\end{proof}
\noindent From here on out we will freely switch between these equivalent formulations without mention of this lemma. 

By the isomorphisms in Theorem~\ref{thm:congequiv} we already know that for a filter $F \in \textsf{Fil}(\alg{A}^-)$, the set $\{ x \in A \mid a \mleq x$ for some $a \in F \}$ is a convex 0-free subuniverse of $\alg{A}$. In particular, the set $\{ x \in A \mid a \mleq x \}$ is a convex 0-free subuniverse of $\alg{A}$ for each $a \in A^-$. We characterize exactly when this set forms a pointed subuniverse.

\begin{lemma}\label{lem:pointedsubuniverse}
Let $\alg{A} \in \CIdInRL$ and $a \in A^-$. Then the following conditions are equivalent:
\begin{enumerate}
	\item $a \leq 0$;
	\item for all $ x \in A$, $a \mleq x$ implies $a \mleq \negmin x$;
	\item there exists $x \in A$ such that $a \mleq x$ and $a \mleq \negmin x$.
\end{enumerate}
\end{lemma}
\begin{proof}
$(1) \Rightarrow (2)$ Suppose that $a \leq 0$ and note that $0_a = a$. Then $0_a = a \leq x \leq 1_a$ implies $a = 0_a = \negmin 1_a \leq \negmin x \leq \negmin 0_a = 1_a$.

$(2) \Rightarrow (3)$ Note that $a \mleq a$, so then (2) yields that $a \mleq \negmin a$.

$(3) \Rightarrow (1)$ Suppose there exists $x \in A$ such that $a \mleq x$ and $a \mleq \negmin x$. Then $a \mleq x \cdot \negmin x = 0_x$. As $a, 0_x \in A^-$, this yields $a \leq 0_x \leq 0$.
%
\end{proof}


\begin{corollary}
Let $\alg{A} \in \CIdInRL$ and $a \in A^-$. Then the set $\{ x \in A \mid a \mleq x \}$ is a pointed subuniverse of $\alg{A}$ if and only if $a \leq 0$.
\end{corollary}
\begin{proof}
As noted, $\{ x \in A \mid a \mleq x \}$ is a convex $0$-free subuniverse. The equivalence between (1) and (2) of Lemma~\ref{lem:pointedsubuniverse} yields that it is closed under $\negmin$ if and only if $a \leq 0$.
\end{proof}

In light of the construction outlined in the next section, the case where $a \in A^-$ but $a \not\leq 0$ is of interest. In this case, Lemma~\ref{lem:pointedsubuniverse} above implies that 
\[
\{ x \mid a \mleq x\in A \} \cap \{ \negmin x \mid a \mleq x \in A\} = \emptyset.
\]
We show that the sets $\{ x  \mid a \mleq x\in A \}$ and $\{ \negmin x \mid a \mleq x\in A \}$ are in bijection with one another and show a number of preservation properties of these bijections. Apart from the obvious order-reversing bijection $x \mapsto \negmin x$, we show that the following functions are order-preserving bijections.
\begin{align*}
&(\_) \wedge \negmin a \colon \{ x \mid a \mleq x\in A \} \to \{ \negmin x \mid a \mleq x\in A \} \\
&(\_) \vee a \colon \{ \negmin x \mid a \mleq x\in A \}  \to \{ x \mid a \mleq x\in A \}.
\end{align*}

\begin{lemma}\label{lem:absorption}
Let $\alg{A} \in \CIdInRL$, $a \in A^-$, and $x \in A$. If $x \leq \negmin a$, then $(x \vee a) \wedge \negmin a = x \vee 0_a$. Furthermore, if $a \mleq x$, then $(\negmin x \vee a) \wedge \negmin a = \negmin x$, $(x \wedge \negmin a) \vee a = x$, $a \mleq \negmin (x \wedge \negmin a)$, and $a \mleq \negmin x \vee a$. That is, the maps described above are order-preserving bijections.
\end{lemma}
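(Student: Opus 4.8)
The plan is to establish the first identity, $(x\vee a)\wedge\negmin a = x\vee 0_a$, and to read off the other two from it using the involution. Granting the first identity and applying it with $\negmin x$ in place of $x$ (which is legitimate when $a\mleq x$, since then $a\leq x$, i.e.\ $\negmin x\leq\negmin a$) yields $(\negmin x\vee a)\wedge\negmin a=\negmin x\vee 0_a$; but $a\mleq x$ forces $a\leq x\leq 1_a$ and hence $0_a\leq\negmin x$ by Lemma~\ref{joinclosed}, so the right-hand side collapses to $\negmin x$, giving the second identity. The third is the De Morgan dual of the second: applying $\negmin$ to $(\negmin x\vee a)\wedge\negmin a=\negmin x$ and using $\negmin(u\wedge v)=\negmin u\vee\negmin v$ turns it into $(x\wedge\negmin a)\vee a=x$. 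Thus everything comes down to the first identity.

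For the first identity the inequality $x\vee 0_a\leq(x\vee a)\wedge\negmin a$ is immediate from monotonicity, since $x,0_a\leq\negmin a$ and $x,0_a\leq x\vee a$. Moreover, replacing $x$ by $x\vee 0_a$ alters neither side (as $0_a\leq a$ gives $x\vee a=(x\vee 0_a)\vee a$), so I may assume $0_a\leq x\leq\negmin a$, and the whole content becomes the reverse inequality $(x\vee a)\wedge\negmin a\leq x$.

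The step I expect to be the obstacle is exactly this reverse inequality, because the lattice reduct of $\alg A$ need not be distributive (for instance $\{-c,0,1,b,c\}$ is a pentagon in $\alg A_1$), so one cannot distribute the outer meet over the join. My way around this is to descend to the negative cone, where meet agrees with fusion and fusion \emph{does} distribute over join. I would first settle the sub-case $x\leq 0$: there $x\vee a\leq 1$, so $(x\vee a)\wedge\negmin a\leq 1$ and hence equals $(x\vee a)\wedge(\negmin a\wedge 1)$; now both $x\vee a$ and $\negmin a\wedge 1$ lie in $A^-$, so this meet coincides with the product $(x\vee a)\cdot(\negmin a\wedge 1)$, which by distributivity of $\cdot$ over $\vee$ equals $\bigl(x\cdot(\negmin a\wedge 1)\bigr)\vee\bigl(a\cdot(\negmin a\wedge 1)\bigr)=x\vee 0_a=x$, using $x\leq\negmin a$, $x,a\leq 1$ and $a\wedge\negmin a=0_a$.

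For the general case $0_a\leq x\leq\negmin a$ I would invoke the involutive law $u\leq v\iff u\cdot\negmin v\leq 0$ and bound $T\cdot\negmin x$ from above, where $T=(x\vee a)\wedge\negmin a$. From $T\leq x\vee a$ and distributivity, $T\cdot\negmin x\leq(x\cdot\negmin x)\vee(a\cdot\negmin x)=0_x\vee a$, where $a\cdot\negmin x=a$ since $a\leq\negmin x\leq 1_a$ gives $a\mleq\negmin x$; and from $T\leq\negmin a$ together with $\negmin x\leq 1_a$ one gets $T\cdot\negmin x\leq\negmin a\cdot 1_a=\negmin a$. Hence $T\cdot\negmin x\leq(0_x\vee a)\wedge\negmin a$. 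Since $0_a\leq 0_x\leq 0$, the sub-case already proved (applied to $0_x$) gives $(0_x\vee a)\wedge\negmin a=0_x\leq 0$, so $T\cdot\negmin x\leq 0$, i.e.\ $T\leq x$, which completes the argument.
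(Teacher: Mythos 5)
Your proof is correct, but for the crux --- the inequality $(x \vee a) \wedge \negmin a \leq x \vee 0_a$ --- you take a genuinely different route from the paper. The paper handles it in one stroke: given any $z \leq (x \vee a) \wedge \negmin a$, it notes $z \cdot a \leq \negmin a \cdot a = 0_a$ and $x \cdot a \leq 0_a$, and then squares, $z = z \cdot z \leq z \cdot (x \vee a) = (z \cdot x) \vee (z \cdot a) \leq (x \vee 0_a) \vee 0_a = x \vee 0_a$, so idempotence plus distributivity of $\cdot$ over $\vee$ finish the job with no case analysis. You instead normalize to $0_a \leq x \leq \negmin a$ (a legitimate reduction, since replacing $x$ by $x \vee 0_a$ changes neither side), prove the negative-cone subcase $x \leq 0$ by converting the meet into a fusion (using that $\wedge$ and $\cdot$ agree on $A^-$) and distributing, and then settle the general case through the residuation criterion $T \leq x \iff T \cdot \negmin x \leq 0$, bounding $T \cdot \negmin x \leq (0_x \vee a) \wedge \negmin a = 0_x \leq 0$ via the subcase applied to $0_x$. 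The auxiliary facts you invoke all check out and involve no circularity: $a \mleq \negmin x$ follows from $a \leq \negmin x \leq 1_a$ and Lemma~\ref{joinclosed}; $\negmin a \cdot 1_a = \negmin a$ holds since $\negmin a \in B_a$; and your unproved assertion $0_a \leq 0_x$ deserves one explicit line --- it holds because $0_a \leq x$ by the normalization, while $x \leq \negmin a \leq 1_a$ gives $0_a \leq \negmin x$. Your treatment of the two ``furthermore'' identities (substituting $\negmin x$, collapsing $\negmin x \vee 0_a$ to $\negmin x$ via Lemma~\ref{joinclosed}, then De Morgan) coincides with the paper's. As for what each approach buys: the paper's squaring trick is shorter and uniform, while your decomposition is longer but makes transparent \emph{why} the identity holds --- it is at bottom a negative-cone computation, where meet and fusion coincide, transported to the general case by the involution --- at the cost of several extra verifications.
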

\begin{proof}
Suppose that $x \leq \negmin a$ for $a \in A^-$. Then it easily follows that $x \vee 0_a \leq \negmin a \vee 0_a = \negmin a$ and $x \vee 0_a \leq x \vee a$ and so $x \vee 0_a \leq (x \vee a) \wedge \negmin a$. For the other direction, consider any $z \in A$ such that $z \leq (x \vee a) \wedge \negmin a$, i.e. $z \leq x \vee a$ and $z \leq \negmin a$. From $z \leq \negmin a$ we deduce that $z \cdot a \leq \negmin a \cdot a = 0_a$ and $x \leq \negmin a$ implies that $x \cdot a \leq \negmin a \cdot a = 0_a$. Then $z \leq x \vee a$ implies that
\[
z \cdot x \leq (x \vee a) \cdot x = x \vee (a \cdot x) \leq x \vee 0_a,
\]
so
\[
z = z \cdot z \leq z \cdot (x \vee a) = (z \cdot x) \vee (z \cdot a) \leq (x \vee 0_a) \vee 0_a = x \vee 0_a.
\]
So indeed, $(x \vee a) \wedge \negmin a = x \vee 0_a$. If $a \mleq x$, then $0_a\leq -x\leq -a$ by Lemma~\ref{joinclosed}. Replacing $x$ by $-x$, it follows that $(\negmin x \vee a) \wedge \negmin a = \negmin x \vee 0_a = \negmin x$. By the De Morgan laws, we then also get $(x \wedge \negmin a) \vee a = x$. Furthermore, we obtain $\negmin x \leq \negmin a \leq 1_a$, so $a \leq \negmin x \vee a \leq 1_a$, i.e., $a \mleq \negmin x \vee a$. By the DeMorgan laws, we also get $a \mleq \negmin (x \wedge \negmin a)$. 
\end{proof}

We list a number of preservation properties here, which will turn out to be useful in the constructions outlined in the following sections.

\begin{lemma}\label{lem:fusionpreserv}
Let $\alg{A} \in \CIdInRL$ and $a \in A^-$.
\begin{enumerate}
\item For all $x, y \in A$ such that $x, y \leq 1_a$, $(x \cdot y) \vee a = (x \vee a) \cdot (y \vee a)$.
\item For all $x, y \in A$ such that $a \mleq x$ and $a \mleq y$, $(x \cdot y) \wedge \negmin a = (x \wedge \negmin a) \cdot (y \wedge \negmin a)$.
\end{enumerate}
\end{lemma}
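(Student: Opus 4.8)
The plan is to prove (1) by a direct expansion, and then to reduce (2) to (1) by routing through the order isomorphism between the intervals $[a,1_a]$ and $[0_a,\negmin a]$, rather than by naively applying the involution to (1).

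For (1), since fusion distributes over $\vee$ and $a$ is idempotent, I would first expand
\[
(x\vee a)\cdot(y\vee a) = (x\cdot y)\vee(x\cdot a)\vee(a\cdot y)\vee a.
\]
It then suffices to show that the two mixed joinands are absorbed by $a$, i.e.\ that $x\cdot a\leq a$ and $a\cdot y\leq a$. This follows from the hypothesis $x,y\leq 1_a$ together with the computation $1_a\cdot a = (a\vee\negmin a)\cdot a = a\vee 0_a = a$, where I use $0_a=\negmin a\cdot a\leq a$. Monotonicity of $\cdot$ then gives $x\cdot a\leq 1_a\cdot a=a$ and likewise $a\cdot y\leq a$, so $(x\vee a)\cdot(y\vee a)=(x\cdot y)\vee a$.

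For (2), the tempting move is to apply $\negmin$ to (1) and appeal to De Morgan, but this fails: $\negmin$ does \emph{not} distribute over $\cdot$ (already inside a single Boolean block $\negmin(p\cdot q)=\negmin p\vee\negmin q\neq\negmin p\cdot\negmin q$). Instead I would exploit that $(\_)\wedge\negmin a$ and $(\_)\vee a$ are mutually inverse bijections between $\{x\mid a\mleq x\}=[a,1_a]$ and $[0_a,\negmin a]$, as recorded before Lemma~\ref{lem:absorption} and justified by it. Given $a\mleq x$ and $a\mleq y$, set $w:=(x\wedge\negmin a)\cdot(y\wedge\negmin a)$. Applying (1) with $u=x\wedge\negmin a$ and $v=y\wedge\negmin a$ (both lie below $\negmin a\leq 1_a$), and then using $(x\wedge\negmin a)\vee a=x$ and $(y\wedge\negmin a)\vee a=y$ from Lemma~\ref{lem:absorption}, yields
\[
w\vee a = \big((x\wedge\negmin a)\vee a\big)\cdot\big((y\wedge\negmin a)\vee a\big) = x\cdot y.
\]
To finish, I would meet both sides with $\negmin a$. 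Since each factor satisfies $0_a\leq x\wedge\negmin a\leq\negmin a$ (the lower bound because $x\geq a\geq 0_a$ and $\negmin a\geq 0_a$), idempotence gives $0_a=0_a\cdot 0_a\leq w\leq\negmin a\cdot\negmin a=\negmin a$; hence Lemma~\ref{lem:absorption} applies to give $(w\vee a)\wedge\negmin a=w\vee 0_a=w$, so that $(x\cdot y)\wedge\negmin a=w=(x\wedge\negmin a)\cdot(y\wedge\negmin a)$.

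The only genuinely delicate point is the one flagged above: one must resist dualizing (1) through the involution and instead pass through the interval isomorphism, verifying along the way that $w$ really sits in $[0_a,\negmin a]$ so that the absorption identity $(w\vee a)\wedge\negmin a=w$ is available. Everything else is routine use of monotonicity, idempotence, and the distributivity of $\cdot$ over $\vee$.
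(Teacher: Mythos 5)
Your proof is correct and follows essentially the same route as the paper's: part (1) by distributing $\cdot$ over $\vee$ and absorbing the mixed terms via $x\cdot a,\,a\cdot y\leq a$, and part (2) by writing $x=(x\wedge\negmin a)\vee a$ and $y=(y\wedge\negmin a)\vee a$ using Lemma~\ref{lem:absorption}, obtaining $x\cdot y=w\vee a$ for $w=(x\wedge\negmin a)\cdot(y\wedge\negmin a)$, and checking $0_a\leq w\leq\negmin a$ so that $(w\vee a)\wedge\negmin a=w\vee 0_a=w$. The only cosmetic differences are that you get $x\cdot a\leq a$ from the direct computation $1_a\cdot a=a$ rather than from $1_a=a\to 1$ and residuation, and that you invoke part (1) inside part (2) where the paper simply repeats the same expansion inline.
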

\begin{proof}
(1) Note that as $x \leq 1_a = a \to 1$, $x \cdot a \leq 1$ and so $x \cdot a = x \cdot a \cdot a \leq a$. Similarly, $a \cdot y \leq a$. So it follows that
\begin{align*}
(x \vee a) \cdot (y \vee a) &= (x \cdot y) \vee (x \cdot a) \vee (a \cdot y) \vee (a \cdot a) \\
	&= (x \cdot y) \vee (x \cdot a) \vee (a \cdot y) \vee a \\
	&= (x \cdot y) \vee a.
\end{align*}

(2) Firstly note that $(x \wedge \negmin a) \cdot a \leq \negmin a \cdot a = 0_a \leq a$ and similarly $(y \wedge \negmin a) \cdot a \leq a$. Hence, by Lemma~\ref{lem:absorption},
\begin{align*}
x \cdot y &= [(x \wedge \negmin a) \vee a] \cdot [(y \wedge \negmin a) \vee a] \\
	&= [(x \wedge \negmin a) \cdot (y \wedge \negmin a)] \vee [(x \wedge \negmin a) \cdot a] \vee [(y \wedge \negmin a) \cdot a] \vee [a \cdot a] \\
	&= [(x \wedge \negmin a) \cdot (y \wedge \negmin a)] \vee a.
\end{align*}
As also $0_a \leq (x \wedge \negmin a) \cdot (y \wedge \negmin a) \leq \negmin a$, Lemma~\ref{lem:absorption} gives
\begin{align*}
(x \cdot y) \wedge \negmin a &= [(x \wedge \negmin a) \cdot (y \wedge \negmin a) \vee a] \wedge \negmin a \\
	&= (x \wedge \negmin a) \cdot (y \wedge \negmin a). \qedhere
\end{align*}
\end{proof}

\section{Gluing Construction}\label{sec:gluing}

In this section we outline a construction to obtain a new member $\alg{A} \oplus_\varphi \alg{B}$ of $\CIdInRL$ from two algebras $\alg{A}, \alg{B} \in \CIdInRL$. In the next section we show how to reverse this construction for each finite $\alg{C} \in \CIdInRL$, allowing for a structural characterization of all finite members of $\CIdInRL$, the main result of this paper. 

Intuitively, we can think of the construction as follows: two algebras $\alg{A}$ and $\alg{B}$ are eligible for the construction if an upset of $\langle A, \mleq^A \rangle$ and a downset of $\langle B, \mleq^B \rangle$ (satisfying some properties) are isomorphic, implemented by a map $\varphi$. The monoidal semilattice of $\alg{A} \oplus_\varphi \alg{B}$ is then constructed by ``placing $\langle B, \mleq^B \rangle$ on top of $\langle A, \mleq^A \rangle$, connected through $\varphi$''. The lattice order is slightly more complicated, but is best expressed by ``wrapping the lattice of $\alg{B}$ inside the lattice of $\alg{A}$'', again connected through $\varphi$ in some way. The involution of $\alg{A} \oplus_\varphi \alg{B}$ is simply the union of that of $\alg{A}$ and $\alg{B}$. For a visual example, we refer to Figure~\ref{fig:gluing}. 

Formally, the ingredients are as follows:
\begin{itemize}
\item two members $\alg{A} = \langle A, \wedge^A, \vee^A, \cdot^A, \to^A, 1^A, 0^A \rangle$ and $\alg{B} = \langle B, \wedge^B, \vee^B, \cdot^B,$ $\to^B, 1^B, 0^B \rangle$ of $\CIdInRL$ such that $A \cap B = \emptyset$;
\item an element $a \in A^-$ such that $a\nleq 0^A$;
\item an element $b \in B^-$;
\item a function $\varphi \colon \{ x \in A \mid a \mleq^A x \} \to \{ y \in B \mid y \mleq^B b \}$ such that
	\begin{itemize}
		\item[$\circ$] $\varphi$ is a bijection;
		\item[$\circ$] $\varphi$ preserves the fusion, i.e. $\varphi(x \cdot^A y) = \varphi(x) \cdot^B \varphi(y)$;
		\item[$\circ$] $\varphi$ preserves the join operation, i.e. $\varphi(x \vee^A y) = \varphi(x) \vee^B \varphi(y)$;
		\item[$\circ$] $0_b = \varphi(a \vee^A 0^A)$.
	\end{itemize}
\end{itemize}
Let us note a few straightforward properties of such ingredients. Firstly, $\alg{B}$ must necessarily be bounded (with respect to the lattice order) as we require $\varphi$ to be a bijection. Secondly, $\varphi$ is an order-isomorphism with respect to both the lattice and the monoidal order, since it preserves both $\vee$ and $\cdot$. Thirdly, $\varphi^{-1}$ also preserves $\vee$ and $\cdot$.  Finally, note that the domain of $\varphi$ is closed under $\vee^A$ by Lemma~\ref{joinclosed}. Similarly, $a\vee^A 0^A$ is in the domain of $\varphi$ since $0_a\le a\le 1^A$ implies $0^A=-1^A\le -0_a=1_a$, hence $a\le a\vee^A 0^A\le 1_a$, or equivalently by Lemma~\ref{joinclosed}, $a\mleq a\vee^A 0^A$.

With the ingredients as listed, we define a new algebra $\alg{A} \oplus_\varphi \alg{B} \coloneqq \langle A \cup B, \wedge, \vee, \cdot, \to, \negmin, 1, 0 \rangle$, referred to as the \emph{gluing} of $\alg{A}$ and $\alg{B}$, where the necessary operations are defined as follows for $x, y \in A \cup B$
\[
x \cdot y = y \cdot x = \begin{cases}
	x \cdot^A y		& x, y \in A \\
	x \cdot^B y		& x, y \in B \\
	x \cdot^A \varphi^{-1}(y \cdot^B b)		& x \in A,~ y \in B ,
\end{cases}
\]
\[
x \vee y = y \vee x = \begin{cases}
	x \vee^A y		& x, y \in A \\
	x \vee^B y		& x, y \in B \\
	\varphi(x \vee^A a) \vee^B y	& x \in A,~ y \in B,~ x \leq^A \negmin^A a \\
	x \vee^A \varphi^{-1}(y \cdot^B b)		& x \in A,~ y \in B,~ x \not\leq^A \negmin^A a,
\end{cases}
\]
\[
\negmin x = \begin{cases}
	\negmin^A x	& x \in A \\
	\negmin^B x	& x \in B,
\end{cases}
\qquad 0 = 0^B, \qquad 1 = 1^B
\]
and the termdefinable connectives $x \to y = \negmin (\negmin y\cdot x)$ as well as $x \wedge y = \negmin (\negmin x \vee \negmin y)$. Note that in the case when $x\le^A -^Aa$, the bijection $\varphi$ is defined for $x\vee^A a$, since $a\le x\vee^A a\le -^Aa\vee a=1_a$. Also, $y\cdot^B b$ is in the domain of $\varphi^{-1}$ because $y\cdot^B b\mleq b$.

The goal of this section is to show that $\alg{A} \oplus_\varphi \alg{B}$ is a member of $\CIdInRL$. To do so, we show that $\langle A \cup B, \vee, \cdot, 1, \negmin\rangle$ is a commutative idempotent \textsf{InRL}-semiring. First we show that $\vee$ is indeed a join operation and deduce the order from this. Since $\alg{A}$ and $\alg{B}$ are disjoint and $\cdot, \vee, -$ are extensions
of these operations on $\alg{A},\alg{B}$ we drop the superscripts when they can be inferred from context.

\begin{lemma}
The operations $\cdot$ and $\vee$ are associative, commutative and idempotent.
\end{lemma}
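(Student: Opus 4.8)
The plan is to take commutativity and idempotence as essentially immediate and to concentrate the work on associativity, treating $\cdot$ and $\vee$ separately. Commutativity holds because each clause assigns a single value to both $x\cdot y$ and $y\cdot x$ (resp. to $x\vee y$ and $y\vee x$), and within each component $\cdot^A,\cdot^B,\vee^A,\vee^B$ are already commutative. Idempotence is even simpler: for $x\cdot x$ and $x\vee x$ only the clauses with both arguments in $A$, or both in $B$, apply, so these inherit idempotence from $\alg A$ and $\alg B$. For associativity I would split into cases according to the membership pattern of the three arguments; since both operations are commutative, it suffices to verify $(x\vee y)\vee z=x\vee(y\vee z)$ (resp. for $\cdot$) on one representative of each pattern and let commutativity of $\vee^A$, $\vee^B$ (resp. $\cdot^A$, $\cdot^B$) bridge the remaining orderings.

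For fusion the analysis is short because a mixed product $x\cdot y$ with $x\in A$, $y\in B$ always lands in $A$, namely $x\cdot^A\varphi^{-1}(y\cdot^B b)$, so no subcases arise. The patterns $AAA$ and $BBB$ reduce directly to associativity of $\cdot^A$ and $\cdot^B$, and for pattern $AAB$ (say $x,y\in A$, $z\in B$) both bracketings equal $x\cdot^A y\cdot^A\varphi^{-1}(z\cdot^B b)$ by associativity of $\cdot^A$. The only genuine computation is pattern $ABB$ (with $x\in A$, $y,z\in B$), where both sides reduce, via associativity of $\cdot^A$, to the identity
\[
\varphi^{-1}(y\cdot^B b)\cdot^A\varphi^{-1}(z\cdot^B b)=\varphi^{-1}\bigl((y\cdot^B z)\cdot^B b\bigr).
\]
This follows because $\varphi^{-1}$ preserves fusion on the down-set $\{w\mid w\mleq^B b\}$ (inverting the fusion-preservation of $\varphi$), together with the idempotence of $b$, which collapses $(y\cdot^B b)\cdot^B(z\cdot^B b)$ to $(y\cdot^B z)\cdot^B b$.

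The main obstacle is associativity of $\vee$, where the two mixed clauses interact: for $x\in A$, $y\in B$ the join lands in $B$ when $x\leq^A\negmin^A a$ and in $A$ otherwise, so the membership of intermediate results is no longer determined by the inputs and the case tree branches further on the conditions $u\leq^A\negmin^A a$. I would organise the argument around a few reusable facts: $u\vee^A v\leq^A\negmin^A a$ iff both $u,v\leq^A\negmin^A a$; every value $\varphi^{-1}(y\cdot^B b)$ satisfies $a\leq^A\varphi^{-1}(y\cdot^B b)$, so $a$ is absorbed in any join with it (by Lemma~\ref{joinclosed}); every value $\varphi(x\vee^A a)$ satisfies $\varphi(x\vee^A a)\mleq^B b$, so multiplying it by $b$ is the identity; and $\cdot^B$ distributes over $\vee^B$. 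In the patterns where the two $A$-arguments straddle the threshold $\negmin^A a$, the cross terms are handled by pushing $\varphi$ or $\varphi^{-1}$ through a join (or fusion) and then absorbing the spurious copies of $a$; in the $ABB$ pattern one uses distributivity of $\cdot^B$ over $\vee^B$ before applying $\varphi^{-1}$.

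A recurring subtlety is that several nominal subcases are vacuous: since $a\not\leq^A 0^A$ forces $a\not\leq^A\negmin^A a$, any element lying above $a$ in the lattice order (such as $u\vee^A\varphi^{-1}(y\cdot^B b)$) cannot lie below $\negmin^A a$, which prunes the $x\not\leq^A\negmin^A a$ branch at the relevant step. I expect the bookkeeping of these case splits, rather than any single algebraic identity, to be the hard part. It is worth noting that the absorption identities of Lemma~\ref{lem:absorption} and the normalisation $0_b=\varphi(a\vee^A 0^A)$ are \emph{not} needed for this lemma; they are reserved for the later verification that $\vee$ actually computes a join and that the bounds $0,1$ and the involution behave correctly.
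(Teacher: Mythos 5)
Your proposal is correct and follows essentially the same route as the paper: commutativity and idempotence inherited componentwise, associativity of $\cdot$ reduced (via commutativity) to the mixed patterns with the $ABB$ case resting on $\varphi^{-1}$ preserving fusion, and associativity of $\vee$ handled by the same case analysis using $\varphi^{-1}$ preserving join, distributivity of $\cdot^B$ over $\vee^B$, and absorption of $a$ below any $\varphi^{-1}(w\cdot^B b)$. Your observations that the threshold conditions prune several subcases and that Lemma~\ref{lem:absorption} and the normalisation $0_b=\varphi(a\vee^A 0^A)$ are not needed here are both accurate and consistent with the paper's proof.
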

\begin{proof}
It is easy to see that $\cdot, \vee$ are idempotent and commutative, since this holds for $\cdot^A,\cdot^B, \vee^A$ and $\vee^B$. To show that $\cdot$ is associative, consider $x,y,z \in A \cup B$. We distinguish cases based on $x,y,z$ being members of $A$ or $B$. If $x,y,z\in A$ or $x,y,z\in B$
then this follows from $\cdot^A$ and $\cdot^B$ being associative. Commutativity implies that
$(x \cdot y) \cdot z = x \cdot (y \cdot z)$ implies $z \cdot (y \cdot x) = (z \cdot y) \cdot x$, hence we only have to check four cases.

Case $x,y\in A,~ z\in B$: $(x{\cdot} y){\cdot} z=(x{\cdot} y){\cdot}\varphi^{-1}(z{\cdot} b)=x{\cdot} (y{\cdot}\varphi^{-1}(z{\cdot} b))$ $=x{\cdot} (y{\cdot} z)$.

Case $x\in A,~y,z\in B$: $(x\cdot y)\cdot z=(x\cdot \varphi^{-1}(y\cdot b))\cdot\varphi^{-1}(z\cdot b)$ $=x\cdot \varphi^{-1}(y\cdot z\cdot b)=x\cdot (y\cdot z)$ where we made use of the fact that $\varphi^{-1}$ preserves $\cdot$ since it is the inverse of a map preserving $\cdot$.

The remaining cases $x,z\in A$, $y\in B$ and $y\in A$, $x,z\in B$ are similar.

For the associativity of $\vee$ we proceed similarly, but the four cases are doubled or quadrupled
depending on whether the members of $A$ are less-or-equal to $-a$ or not. We cover two of the cases that are less straightforward.

Suppose that $x,z \in A$, $y \in B$, $x \leq \negmin a$ and $z \not\leq \negmin a$. Note that $\varphi^{-1}$ preserves the join operation since $\varphi$ is a join homomorphism, and for any $w\in B$
we have $w\cdot b\mleq b$, hence $a \leq \varphi^{-1}(w \cdot b)$. Thus
\begin{align*}
(x \vee y) \vee z		&= (\varphi(x \vee a) \vee y) \vee z \\
	&= \varphi^{-1}((\varphi(x \vee a) \vee y) \cdot b) \vee z \\
	&= \varphi^{-1}(\varphi(x \vee a) \cdot b \vee y \cdot b) \vee z \\
	&= \varphi^{-1}(\varphi(x \vee a) \vee y \cdot b) \vee z \\
	&= \varphi^{-1}\varphi(x \vee a) \vee \varphi^{-1}(y \cdot b) \vee z \\
	&= x \vee a \vee \varphi^{-1}(y \cdot b) \vee z \\
	&= x \vee \varphi^{-1}(y \cdot b) \vee z \\
	&= x \vee (y \vee z).
\end{align*}

Suppose that $x \in A$, $y,z \in B$, $x \not\leq \negmin a$. Then again using the fact that $\varphi^{-1}$ preserves the join,
\begin{align*}
(x \vee y) \vee z		&= (x \vee \varphi^{-1}(y \cdot b)) \vee z \\
	&= x \vee \varphi^{-1}(y \cdot b) \vee \varphi^{-1}(z \cdot b) \\
	&= x \vee \varphi^{-1}(y \cdot b \vee z \cdot b) \\
	&= x \vee \varphi^{-1}((y \vee z) \cdot b) \\
	&= x \vee (y \vee z). \qedhere
\end{align*}
\end{proof}

Now that we have shown that $\vee$ is a join operation, it is easily verified that the corresponding lattice order $\leq$ can be expressed as follows:
\[
x \leq y \iff \begin{cases}
	x \leq^A y		& x, y \in A \\
	x \leq^B y		& x, y \in B \\
	\varphi(x \vee^A a) \leq^B y \text{ and } x \leq^A \negmin^A a		& x \in A, y \in B \\
	\varphi^{-1}(x \cdot^B b) \leq^A y\text{ and } y\nleq^A-^A a		& x \in B, y \in A.
\end{cases}
\]

\noindent The fact that $\cdot$ distributes over $\vee$ follows from residuation. Hence, we are left to show the residuation law. For this proof, we need two small facts.

\begin{lemma}\label{lem:factsconstruction}
\begin{enumerate}
\item For all $x \in B$, $y \in A$, $\varphi^{-1}(x \cdot b) \cdot y \leq \negmin a$ iff $y \leq \negmin a$.
\item For all $z \in A$, $z \leq 0$ iff $z \leq 0^A$.
\end{enumerate}
\end{lemma}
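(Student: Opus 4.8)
The plan is to prove the two parts separately, with (1) a short computation and (2) requiring the more delicate comparison between the two designated elements $0^A$ and $0 = 0^B$.

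For (1), set $w \coloneqq \varphi^{-1}(x \cdot^B b)$. Since $x \cdot^B b \mleq^B b$, the element $w$ lies in the domain of $\varphi$, so $a \mleq^A w$, i.e.\ $a \cdot^A w = a$. I then use the cyclic equivalence $u \leq^A \negmin^A a \iff u \cdot^A a \leq^A 0^A$, which holds in $\alg{A}$ since it is commutative (hence cyclic) and involutive. Applying it to $u = w \cdot^A y$ and substituting $w \cdot^A a = a$ gives
\[
w \cdot^A y \leq^A \negmin^A a \iff w \cdot^A y \cdot^A a \leq^A 0^A \iff y \cdot^A a \leq^A 0^A \iff y \leq^A \negmin^A a,
\]
which is exactly the claimed equivalence; the only nonformal step is commuting $w \cdot^A a = a$ into the triple product.

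For (2) I first unfold the glued order. By the definition of $\leq$ in the case $z \in A$, $0 = 0^B \in B$, we have $z \leq 0 \iff z \leq^A \negmin^A a$ and $\varphi(z \vee^A a) \leq^B 0^B$, where $z \leq^A \negmin^A a$ guarantees $z \vee^A a$ lies in the domain of $\varphi$ (as $a \leq^A z \vee^A a \leq^A 1_a$). The key step is to replace the comparison with $0^B$ by one with $0_b$ inside the codomain of $\varphi$. As in the proof of Lemma~\ref{joinclosed} applied in $\alg{B}$, we have $0_b = b \cdot^B 0^B$, and every codomain element $v$ satisfies $v \mleq^B b$, i.e.\ $v = v \cdot^B b$; monotonicity of $\cdot^B$ then yields, for such $v$, that $v \leq^B 0^B$ iff $v \leq^B 0_b$ (forward by multiplying by $b$, backward via $0_b \leq^B 0^B$ from Lemma~\ref{lem:facts01}(3)). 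Since $\varphi$ is a join-preserving bijection it is an order isomorphism, so applying this to $v = \varphi(z \vee^A a)$ together with $0_b = \varphi(a \vee^A 0^A)$ transports the inequality to $\alg{A}$:
\[
z \leq 0 \iff z \leq^A \negmin^A a \text{ and } z \vee^A a \leq^A a \vee^A 0^A.
\]

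Finally I close the gap on the $\alg{A}$ side using absorption. Since $0^A \leq^A \negmin^A a$, if $z \leq^A 0^A$ then both $z \leq^A \negmin^A a$ and $z \vee^A a \leq^A a \vee^A 0^A$ hold, giving one direction. Conversely, from $z \leq^A \negmin^A a$ and $z \vee^A a \leq^A a \vee^A 0^A$ I apply $(\_)\wedge^A \negmin^A a$ to both sides and evaluate each via Lemma~\ref{lem:absorption}: the left becomes $z \vee^A 0_a$ and the right becomes $0^A \vee^A 0_a = 0^A$ (using $0_a \leq^A 0^A$), whence $z \leq^A z \vee^A 0_a \leq^A 0^A$. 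I expect the main obstacle to be the middle step of (2): recognizing via $0_b = b \cdot^B 0^B$ that $0^B$ and $0_b$ coincide as bounds on the codomain of $\varphi$, so that the comparison can be pushed across $\varphi$ and then resolved by absorption.
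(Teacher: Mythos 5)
Your proof is correct and takes essentially the same route as the paper's: part (1) is the paper's argument verbatim (absorb $a$ into the product via $a \mleq^A \varphi^{-1}(x \cdot^B b)$ and residuate), and part (2) uses the same ingredients in the same order --- multiplying by $b$ to replace the bound $0^B$ by $0_b$ on the codomain of $\varphi$, transporting the inequality across the order isomorphism $\varphi$ via $0_b = \varphi(a \vee^A 0^A)$, and closing with Lemma~\ref{lem:absorption}. The only difference is presentational: you organize part (2) as one chain of equivalences (deriving $v \leq^B 0^B \iff v \leq^B 0_b$ from $0_b = b \cdot^B 0^B$), where the paper argues the two directions separately.
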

\begin{proof}
(1) As $a \mleq \varphi^{-1}(x \cdot b)$, we have that $\varphi^{-1}(x \cdot b) \cdot y \leq \negmin a$ is equivalent to $a \cdot y = a \cdot \varphi^{-1}(x \cdot b) \cdot y \leq 0^A$, i.e. $y \leq \negmin a$.

(2) For the left-to-right direction, suppose that $z \leq 0$, that is, $\varphi(z \vee a) \leq 0^B$ and $z \leq \negmin a$. As $\varphi(z \vee a) \mleq b$ and $b \leq 1^B$,
\[
\varphi(z \vee a) = \varphi(z \vee a) \cdot b \leq 0^B \cdot b \leq \negmin b \cdot b = 0_b.
\]
Therefore, because $0_b = \varphi(0^A \vee a)$ and $\varphi$ reflects the lattice order, $z \vee a \leq 0^A \vee a$. Hence, $z \leq (z \vee a) \wedge \negmin a \leq (0^A \vee a) \wedge \negmin a = 0^A$ by Lemma~\ref{lem:absorption}.

For the right-to-left direction, suppose that $z \leq 0^A$. Then $z \leq 0^A \leq \negmin a$ (since $a\leq 1^A$ is a standing assumption) as well as $z \vee a \leq 0^A \vee a$. We note that for $z \leq 0^A$, we have $a\mleq z\vee a$
because $a\leq z \vee a\leq -a\vee a=1_a$, and similarly $a\mleq 0^A\vee a$ hence $\varphi$ is defined at $z\vee a$ and $ 0^A\vee a$.  As $\varphi$ preserves the lattice order, $\varphi(z \vee a) \leq \varphi(0^A \vee a) = 0_b \leq 0^B$ and so $z \leq 0$.
\end{proof}

Finally, we show the required residuation property.

\begin{lemma}
For all $x, y \in A \cup B$,
\[
x \cdot y \leq 0 	\quad \iff \quad x \leq \negmin y.
\]
\end{lemma}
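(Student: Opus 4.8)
The plan is to verify the equivalence by splitting into the four cases determined by whether $x$ and $y$ lie in $A$ or in $B$, since both $\cdot$ and $\negmin$ are defined by cases on membership. The two homogeneous cases are immediate. If $x,y \in A$, then $x \cdot y = x \cdot^A y \in A$ and $\negmin y = \negmin^A y$, so by Lemma~\ref{lem:factsconstruction}(2) and the fact that $\leq$ restricts to $\leq^A$ on $A$, the claim reduces to residuation in $\alg{A}$; symmetrically, if $x,y \in B$ the claim reduces to residuation in $\alg{B}$ (here $0 = 0^B$ sits in $B$). So the real content lies in the two mixed cases.

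For $x \in A$, $y \in B$, I would set $u \coloneqq \varphi^{-1}(y \cdot^B b) \in A$, so that $x \cdot y = x \cdot^A u$ and $a \mleq^A u$ (the latter since $y \cdot^B b \mleq^B b$). Then $x \cdot y \leq 0 \iff x \cdot^A u \leq^A 0^A \iff x \leq^A \negmin^A u$, by Lemma~\ref{lem:factsconstruction}(2) and residuation in $\alg{A}$. On the other side, $x \leq \negmin y = \negmin^B y$ unfolds, via the order clause for $x \in A, y \in B$, to the conjunction $\varphi(x \vee^A a) \leq^B \negmin^B y$ and $x \leq^A \negmin^A a$. I would transport the first conjunct through $\varphi$: using $\varphi(x \vee^A a) \mleq^B b$, fusion-preservation, $\varphi(u) = y \cdot^B b$, and $a \cdot^A u = a$, one obtains the identity $\varphi(x \vee^A a) \cdot^B y = \varphi\big((x \cdot^A u) \vee^A a\big)$; combined with residuation in $\alg{B}$, the equivalence $w \leq^B 0^B \iff w \leq^B 0_b$ for $w \mleq^B b$, the defining equation $0_b = \varphi(0^A \vee^A a)$, and the order-isomorphism property of $\varphi$, this shows that (under $x \leq^A \negmin^A a$) the first conjunct is equivalent to $(x \cdot^A u) \vee^A a \leq^A 0^A \vee^A a$. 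It then remains to verify $x \cdot^A u \leq^A 0^A \iff \big[(x \cdot^A u) \vee^A a \leq^A 0^A \vee^A a$ and $x \leq^A \negmin^A a\big]$: the forward direction is immediate by monotonicity together with $\negmin^A u \leq^A \negmin^A a$ (so $x\le^A\negmin^A a$ holds and $x\vee^A a$ lies in the domain of $\varphi$), while for the converse I would first check $x \cdot^A u \leq^A \negmin^A a$ (from $\negmin^A a \cdot^A u \cdot^A a = 0_a \leq^A 0^A$ and residuation) and then apply Lemma~\ref{lem:absorption} to conclude $x \cdot^A u \leq^A (0^A \vee^A a) \wedge^A \negmin^A a = 0^A$.

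For $x \in B$, $y \in A$ I would use commutativity to write $x \cdot y = y \cdot x = y \cdot^A v$ with $v \coloneqq \varphi^{-1}(x \cdot^B b)$ and $a \mleq^A v$, so that $x \cdot y \leq 0 \iff y \leq^A \negmin^A v \iff v \leq^A \negmin^A y$ by Lemma~\ref{lem:factsconstruction}(2), residuation in $\alg{A}$, and the order-reversing involution of $\alg{A}$. Meanwhile $x \leq \negmin y = \negmin^A y$ unfolds, via the order clause for $x \in B, y \in A$, to $v \leq^A \negmin^A y$ together with the side condition $\negmin^A y \nleq^A \negmin^A a$. The whole case therefore reduces to showing this side condition is automatic, i.e.\ that $v \leq^A \negmin^A y$ forces $\negmin^A y \nleq^A \negmin^A a$. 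This is the step I expect to be the main (if short) obstacle, and it is exactly where the standing hypothesis $a \nleq^A 0^A$ is needed: if both $v \leq^A \negmin^A y$ and $\negmin^A y \leq^A \negmin^A a$ held, then $v \leq^A \negmin^A a$, i.e.\ $a = a \cdot^A v \leq^A 0^A$, contradicting $a \nleq^A 0^A$. This asymmetry between the two mixed cases (the first invoking Lemma~\ref{lem:absorption}, the second the hypothesis $a \nleq^A 0^A$) reflects that $\alg{B}$ is glued ``on top'' of $\alg{A}$.
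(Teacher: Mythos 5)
Your proof is correct and follows essentially the same route as the paper: the same four-case analysis, with the mixed case $x \in A$, $y \in B$ hinging on the identity $\varphi(x \vee^A a) \cdot^B y = \varphi\big((x \cdot^A \varphi^{-1}(y \cdot^B b)) \vee^A a\big)$ obtained from Lemma~\ref{lem:fusionpreserv}(1) and fusion-preservation of $\varphi$, and the case $x \in B$, $y \in A$ on the side condition $\negmin^A y \nleq^A \negmin^A a$ being subsumed by the first conjunct thanks to the ingredient $a \nleq^A 0^A$. The only cosmetic difference is that in the first mixed case you transport both conditions back to the $\alg{A}$-side via Lemma~\ref{lem:factsconstruction}(2) (re-deriving inline the $w \leq^B 0^B \iff w \leq^B 0_b$ bridge from its proof) and close with Lemma~\ref{lem:absorption}, whereas the paper matches the two unfolded conditions directly on the $\alg{B}$-side, which makes Lemma~\ref{lem:absorption} unnecessary there.
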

\begin{proof}
Consider $x, y \in A \cup B$. Again, we prove by cases. The case when $x,y \in A$ follows from Lemma~\ref{lem:factsconstruction}(2) together with residuation in $\alg{A}$. The case for $x,y \in B$ follows directly from residuation in $\alg{B}$. 

Next, suppose that $x \in A$ and $y \in B$. Then $x \cdot y = x \cdot \varphi^{-1}(y \cdot b)$, so
\begin{align*}
x \cdot y \leq 0 &\iff \varphi((x \cdot \varphi^{-1}(y \cdot b)) \vee a) \leq 0^B \\
&\qquad \qquad \text{and } x \cdot \varphi^{-1}(y \cdot b) \leq \negmin a
\end{align*}
and
\begin{align*}
x \leq \negmin y	&\iff \varphi(x \vee a) \leq \negmin y \text{ and } x \leq \negmin a \\
	&\iff y \cdot \varphi(x \vee a) \leq 0^B \text{ and } x \leq \negmin a.
\end{align*}
Note that by Lemma~\ref{lem:factsconstruction}(1), we have that 
\[
x \cdot \varphi^{-1}(y \cdot b) \leq \negmin a \ \iff \ x \leq \negmin a.
\]
So assume that $x \leq \negmin a$. Since also $a \mleq \varphi^{-1}(y \cdot b)$, Lemma~\ref{lem:fusionpreserv}(1) implies that
\begin{align*}
(x \cdot \varphi^{-1}(y \cdot b)) \vee a 	&= (x \vee a) \cdot (\varphi^{-1}(y \cdot b) \vee a) \\
&= (x \vee a) \cdot \varphi^{-1}(y \cdot b).
\end{align*}
Applying $\varphi$ to both sides then gives
\begin{align*}
\varphi((x \cdot \varphi^{-1}(y \cdot b)) \vee a)	&= \varphi((x \vee a) \cdot \varphi^{-1}(y \cdot b)) \\
&= \varphi(x \vee a) \cdot (y \cdot b) \\
&= \varphi(x \vee a) \cdot y,
\end{align*}
proving the required equivalence.

Finally, suppose $y\in A$ and $x\in B$. Then $x\cdot y\in A$, hence $x\cdot y\le 0$ is equivalent to
$x\cdot y\le 0^A$ by Lemma~\ref{lem:factsconstruction}(2). By definition of $\cdot$, we also have
$x\cdot y=y\cdot\varphi^{-1}(x\cdot b)$.

By definition of $\leq$ we have $x\leq \negmin y$ if and only if $\varphi^{-1}(x\cdot b)\leq \negmin y$ and $\negmin y\nleq\negmin a$. The first condition is equivalent to $y\cdot\varphi^{-1}(x\cdot b)\leq 0^A$, and we claim that it subsumes $\negmin y\nleq\negmin a$. Suppose to the contrary that $\negmin y\leq\negmin a$, i.e., $a\leq y$. Since we also have $a\le \varphi^{-1}(x\cdot b)\leq 1_a$, it follows that $a=a \cdot a\leq y\cdot \varphi^{-1}(x\cdot b)\leq 0^A$. This is a contradiction since $a\nleq 0^A$ is one of the ingredients of the $\alg A\oplus_\varphi\alg B$ construction. 
\end{proof}

We have shown that $\alg{A} \oplus_\varphi\alg{B}$ satisfies all required properties of a commutative idempotent \textsf{InRL}-semiring. By the term equivalence from Theorem~\ref{thm:termequiv}, we obtain the following theorem.

\begin{theorem}\label{thm:gluing}
The algebra $\alg{A} \oplus_\varphi \alg{B}$ is a member of $\CIdInRL$.
\end{theorem}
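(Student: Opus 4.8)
The plan is to verify that $\alg{A} \oplus_\varphi \alg{B} = \langle A \cup B, \vee, \cdot, 1, \negmin \rangle$ satisfies each defining axiom of a commutative idempotent InRL-semiring, and then invoke Theorem~\ref{thm:termequiv} to conclude membership in $\CIdInRL$. Most of the work has in fact already been distributed across the preceding lemmas, so the proof of Theorem~\ref{thm:gluing} itself should be a short assembly step. Concretely, I would check the following in order.

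\textbf{Semilattice and monoid structure.} The preceding lemma already establishes that $\cdot$ and $\vee$ are associative, commutative, and idempotent, so $\langle A \cup B, \vee \rangle$ is a semilattice and $\langle A \cup B, \cdot \rangle$ is a commutative idempotent semigroup. It remains to confirm that $1 = 1^B$ is a monoid unit for $\cdot$. For $x, y \in B$ this is immediate from $\alg{B}$; for $x \in A$, one computes $x \cdot 1 = x \cdot 1^B = x \cdot^A \varphi^{-1}(1^B \cdot^B b) = x \cdot^A \varphi^{-1}(b)$, so I would need the normalization fact that $\varphi^{-1}(b) = 1^A$, i.e. that the top element $1^B$ of the codomain downset corresponds under $\varphi^{-1}$ to the monoidal top $1^A$. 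This follows because $\varphi$ is a $\cdot$-preserving bijection between the monoidal filter $\{x \in A \mid a \mleq^A x\}$ (whose $\mleq^A$-top is $1^A$) and $\{y \in B \mid y \mleq^B b\}$ (whose $\mleq^B$-top is $b$), and a fusion-preserving bijection between two meet-semilattices must send top to top.

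\textbf{Involution and negation constants.} I would check $\negmin \negmin x = x$ for all $x \in A \cup B$, which is immediate since $\negmin$ is defined casewise as $\negmin^A$ or $\negmin^B$ and each is an involution on its own algebra (with $A, B$ closed under their respective negations and $A \cap B = \emptyset$). The constant $0 = 0^B = \negmin^B 1^B = \negmin 1$ is forced, matching the InRL-semiring requirement that $0 = \negmin 1$.

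\textbf{The residuation law.} The three equivalences $x \leq \negmin y \iff x \cdot y \leq \negmin 1 = 0 \iff y \leq \negmin x$ are the real content, and the final lemma of the section has already proved $x \cdot y \leq 0 \iff x \leq \negmin y$ by exhausting the four cases for the location of $x, y$. The second equivalence ($x \cdot y \leq 0 \iff y \leq \negmin x$) then follows by commutativity of $\cdot$, which was established. Hence all three legs of the InRL-semiring residuation condition hold. The main obstacle in this whole argument is precisely this residuation lemma, which is already completed in the excerpt; the delicate point there is the mixed case $y \in A$, $x \in B$, where one must rule out the spurious condition $\negmin y \nleq^A \negmin a$ using the standing hypothesis $a \nleq 0^A$. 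With associativity, the unit law, the involution, and residuation all verified, every axiom of a commutative idempotent InRL-semiring holds for $\alg{A} \oplus_\varphi \alg{B}$, so by Theorem~\ref{thm:termequiv} it is term-equivalent to a member of $\CIdInRL$, completing the proof.
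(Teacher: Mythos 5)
Your proposal is correct and takes essentially the same route as the paper, which likewise treats the theorem as a short assembly step: the preceding lemmas give associativity, commutativity and idempotence of $\cdot$ and $\vee$ plus the residuation equivalence (with distributivity of $\cdot$ over $\vee$ following from residuation), and membership in $\CIdInRL$ is then concluded via the term equivalence of Theorem~\ref{thm:termequiv}. Your explicit verification of the unit law, via the observation that the fusion-preserving bijection $\varphi$ is an isomorphism of meet-semilattices and hence satisfies $\varphi(1^A)=b$, so $x\cdot 1^B = x\cdot^A\varphi^{-1}(b)=x\cdot^A 1^A=x$, is a sound filling-in of a detail the paper leaves implicit.
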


\begin{example}\label{exa:gluing}
A non-trivial example of the gluing construction as outlined here is given in Figure~\ref{fig:gluing}, where the algebra $\alg{A} \oplus_\varphi \alg{B}$ is obtained by gluing the algebra 
$\alg{A}$ with universe $A = \{ x \in A \cup B \mid x \mleq 1_u \}$ and the algebra $\alg{B}$ with universe $B = \{ y \in A \cup B \mid 0_v \mleq y \}$. The bijection $\varphi$, depicted by dashed lines in $\langle \alg{A} \oplus_\varphi \alg{B}, \mleq \rangle$, is defined by $\varphi(1_u)=b$, $\varphi(u)=0_b$, $\varphi(1_a)=v$, and $\varphi(a)=0_v$.

\begin{figure}[h]
\begin{center}
\begin{tikzpicture}[baseline=0pt,scale=0.59]
	
	\node at (-0.8,-1)[n] {$\langle \alg{A} \oplus_\varphi \alg{B}, \mleq \rangle$};
	
	\node(0w) at (0,0)[label=left:$0_w$]{};
	\node(-w)	at (-1,1)[label=left:$\negmin w$]{};
	\node(w)	at (1,1)[label=right:$w$]{};
	\node(1w)	at (0,2)[label=right:$1_w$]{};
	
	\node(0a)	at (-2,2)[label=left:$0_a$]{};
	\node(-a)	at (-3,3)[label=left:$\negmin a$]{};
	\node(a)	at (-1,3)[label=right:$a$]{};
	\node(1a)	at (-2,4)[label=right:$1_a$]{};
	
	\node(0u)	at (-4,4)[label=left:$0_u$]{};
	\node(-u)	at (-5,5)[label=left:$\negmin u$]{};
	\node(u)	at (-3,5)[label=right:$u$]{};
	\node(1u)	at (-4,6)[label=right:$1_u$]{};
	
	\node(0v)	at (0,4)[label=right:$0_v$]{};
	\node(v)	at (-1,5)[label=left:$v$]{};
	\node(-v)	at (1,5)[label=right:$\negmin v$]{};
	\node(1v)	at (0,6)[label=right:$1_v$]{};
	
	\node(0b)	at (-2,6)[label=left:$0_b$]{};
	\node(b)	at (-3,7)[label=left:$b$]{};
	\node(-f)	at (-1,7)[]{};
	\node(g)	at (-2,8)[]{};
	\node(-g)	at (-2,7)[]{};
	\node(f)	at (-3,8)[]{};
	\node(-b)	at (-1,8)[label=right:$\negmin b$]{};
	\node(1b)	at (-2,9)[label=left:$1_b$]{};
	
	\node[n]	at (-2,5) {$\varphi$};
	
	\draw (0w) edge[boolean] (w) edge[boolean] (-w);
	\draw (w) edge[boolean] (1w);
	\draw (-w) edge (0a) edge[boolean] (1w);
	\draw (1w) edge (a);
	\draw (0a) edge[boolean] (-a) edge[boolean] (a);
	\draw (a) edge[boolean] (1a);
	\draw (-a) edge (0u) edge[boolean] (1a);
	\draw (1a) edge (u);
	\draw (0u) edge[boolean] (-u) edge[boolean] (u);
	\draw (u) edge[boolean] (1u);
	\draw (-u) edge[boolean] (1u);
	
	\draw (0v) edge[boolean] (v) edge[boolean] (-v);
	\draw (-v) edge[boolean] (1v);
	\draw (v) edge (0b) edge[boolean] (1v);
	\draw (1v) edge (-f);
	\draw (0b) edge[boolean] (-f) edge[boolean] (-g) edge[boolean] (b);
	\draw (-f) edge[boolean] (g) edge[boolean] (-b);
	\draw (-g) edge[boolean] (f) edge[boolean] (-b);
	\draw (-b) edge[boolean] (1b);
	\draw (b) edge[boolean] (f) edge[boolean] (g);
	\draw (g) edge[boolean] (1b);
	\draw (f) edge[boolean] (1b);
	
	\draw (a) edge[dashed,->] (0v);
	\draw (1a) edge[dashed,->] (v);
	\draw (u) edge[dashed,->] (0b);
	\draw (1u) edge[dashed,->] (b);
\end{tikzpicture}
\quad
\begin{tikzpicture}[baseline=0pt,scale=0.59]

	\node at (-0.8,-1)[n] {$\langle \alg{A} \oplus_\varphi \alg{B}, \leq \rangle$};

	\node(0w)	at (0,0)[label=left:$0_w$]{};
	\node(0a)	at (-1,1)[label=left:$0_a$]{};
	\node(0u)	at (-2,2)[label=left:$0_u$]{};
	\node(-u)	at (-3,3)[label=left:$\negmin u$]{};
	\node(-a)	at (-4.5,4.5)[label=left:$\negmin a$]{};
	\node(-w)	at (-5.5,5.5)[label=left:$\negmin w$]{};
	
	\node(w)	at (3.5,3.5)[label=right:$w$]{};
	\node(a)	at (2.5,4.5)[label=right:$a$]{};
	\node(u)	at (1,6)[label=right:$u$]{};
	\node(1u)	at (0,7)[label=right:$1_u$]{};
	\node(1a)	at (-1,8)[label=right:$1_a$]{};
	\node(1w)	at (-2,9)[label=right:$1_w$]{};
	
	\node(0v)	at (0,2)[label=right:$0_v$]{};
	\node(-v)	at (1,3)[label=right:$\negmin v$]{};
	\node(v)	at (-3,6)[label=left:$v$]{};
	\node(1v)	at (-2,7)[label=left:$1_v$]{};
	
	\node(0b)	at (-1,3)[label=left:$0_b$]{};
	\node(b)	at (-2,4)[label=left:$b$]{};
	\node(-f)	at (0,4)[]{};
	\node(g)	at (-1,5)[]{};
	\node(-g)	at (-1,4)[]{};
	\node(f)	at (-2,5)[]{};
	\node(-b)	at (0,5)[label=right:$\negmin b$]{};
	\node(1b)	at (-1,6)[label=left:$1_b$]{};

	\draw (0w) edge (0a) edge (w);
	\draw (0a) edge (0u);
	\draw (0u) edge (-u);
	\draw (-u) edge (-a);
	\draw (-a) edge (-w);
	\draw (-w) edge (1w);
	
	\draw (w) edge (a);
	\draw (a) edge (u);
	\draw (u) edge (1u);
	\draw (1u) edge (1a);
	\draw (1a) edge (1w);
	
	\draw (0v) edge (0b) edge (-v);
	\draw (-v) edge (-f);
	
	\draw (0b) edge (-f) edge (-g) edge (b);
	\draw (-f) edge (g) edge (-b);
	\draw (-g) edge (f) edge (-b);
	\draw (-b) edge (1b);
	\draw (b) edge (f) edge (g);
	\draw (g) edge (1b);
	\draw (f) edge (1b);
	
	\draw (f) edge (v);
	\draw (1b) edge (1v);
	\draw (v) edge (1v);

	\draw (0a) edge[dashed] (0v);
	\draw (0u) edge[dashed] (0b);
	\draw (-u) edge[dashed] (b);
	\draw (-a) edge[dashed] (v);
	
	\draw (-v) edge[dashed] (a);
	\draw (-b) edge[dashed] (u);
	\draw (1b) edge[dashed] (1u);
	\draw (1v) edge[dashed] (1a);
\end{tikzpicture}
\end{center}
\caption{A depiction of the algebra $\alg{A} \oplus_\varphi \alg{B}$ from Example~\ref{exa:gluing}.}\label{fig:gluing}
\end{figure}
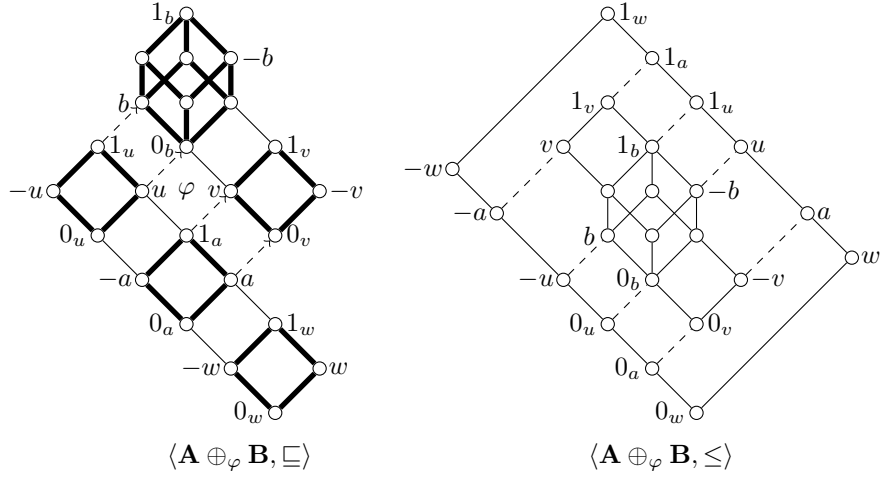
\end{example}

\section{Decomposition}
\label{sec:reverse}

In this section we outline how to reverse the gluing construction given in the previous section for any finite member of $\CIdInRL$. The main result, Theorem~\ref{thm:structchar}, is that we can construct any finite member of the variety $\CIdInRL$ starting from all finite Boolean algebras. 

Consider any $\alg{C} \in \CIdInRL$. To reverse the construction from Section~\ref{sec:gluing}, we find two algebras $\alg{A}, \alg{B} \in \CIdInRL$ and a bijection $\varphi$ such that $\alg{C} = \alg{A} \oplus_\varphi \alg{B}$. A crucial role is reserved for the atoms of the distributive lattice $\langle C^+, \wedge, \vee, 1 \rangle$ from Theorem~\ref{distriblattice}. Recall that an \emph{atom} is an element $a \in C^+$ such that for any $b \in C^+$ such that $1 \leq b \leq a$, either $b = 1$ or $b = a$. For an arbitrary member of $\CIdInRL$, such atoms need not exist. In particular, if $\alg{C}$ is a Boolean algebra, $C^+ = \{ 1 \}$ and therefore $C^+$ contains no atoms. Moreover, if $\alg{C}$ is infinite, atoms of $C^+$ also need not exist. For example, consider an infinite version of the algebras $\alg{A}_n$ constructed in the next section, depicted in Figure~\ref{fig:locfinite}. Therefore we consider $\alg{C}$ to be finite and not a Boolean algebra and let $c \in C^+$ be such an atom. As $\langle C^+, \wedge, \vee, 1 \rangle$ is distributive and $c$ is an atom, there exists a (unique) element $c^\ast \in C^+$ such that 
\[
\{ x \in C^+ \mid x \mleq c \} \cup \{ x \in C^+ \mid c^\ast \mleq x \} = C^+
\]
and
\[
\{ x \in C^+ \mid x \mleq c \} \cap \{ x \in C^+ \mid c^\ast \mleq x \} = \emptyset
\]
(see~\cite[Theorem 6]{Sto38}). These two elements $c$ and $c^\ast$ can then be used to partition $C$ into two subsets: $\{ x \in C \mid x \mleq c \}$ and $\{ y \in C \mid \negmin c^\ast \mleq y \}$. We summarize some properties of these two intervals. Note that $c=1_c$ and $-c^*=0_{c^*}$.

\begin{lemma}
	Let $c$ and $c^\ast$ be as defined above.
	\begin{enumerate}
		\item For all $x \in C$, $\negmin c^\ast \mleq x$ if and only if $x \not\mleq c$;
		\item $\langle \{ x \in C \mid x \mleq c \}, \wedge, \vee, \cdot, \to, c, \negmin c \rangle$ and $\langle \{ y \in C \mid \negmin c^\ast \mleq y \}, \wedge, \vee, \cdot,$ $\to, 1, 0 \rangle$ are a subalgebra and pointed subalgebra of $\alg{C}$, respectively.
	\end{enumerate}
\end{lemma}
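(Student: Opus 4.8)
The lemma has two parts. Part (1) is a clean order-theoretic statement about the distributive lattice $\langle C^+, \wedge, \vee, 1 \rangle$ transferred to the monoidal order $\mleq$, and Part (2) asserts that the two intervals carry subalgebra structures. I would prove (1) first, since the partition property it establishes underlies the well-definedness of everything in (2).

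For Part (1), the key observation is that by Theorem~\ref{distriblattice} the map $x \mapsto 1_x$ identifies each $\mleq$-class with a unique positive element, and by Lemma~\ref{lem:monoidalmonoton} the operation $x \mapsto 1_x$ is $\mleq$-monotone. I would show $\negmin c^\ast \mleq x \iff x \not\mleq c$ by passing to positive elements: using Lemma~\ref{joinclosed}, the condition $x \mleq c$ (recall $c = 1_c$) is equivalent to $1_x \mleq c$ in $C^+$, and similarly $\negmin c^\ast \mleq x$ should reduce to $c^\ast \mleq 1_x$ in $C^+$ (here one uses $\negmin c^\ast = 0_{c^\ast}$ together with $0_{c^\ast} \mleq x \iff c^\ast = 1_{c^\ast} \mleq 1_x$, which follows from Lemma~\ref{joinclosed} and the De Morgan behaviour of $0_{(\_)}$, $1_{(\_)}$). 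Once both sides are rephrased as statements purely about $1_x \in C^+$, the claim is exactly the defining partition of $C^+$ by the atom $c$ and its complement $c^\ast$ quoted from~\cite{Sto38}: every $1_x$ satisfies either $1_x \mleq c$ or $c^\ast \mleq 1_x$, and never both.

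For Part (2), I would verify closure under the operations for each interval separately. For $\{ x \in C \mid x \mleq c\}$: since $c = 1_c$ is the $\mleq$-top of this down-set, closure under $\cdot$ is immediate ($\cdot$ is $\mleq$-meet), and closure under the relative involution $y \mapsto \negmin y$ on this interval is exactly the content of Lemma~\ref{lem:intervalnegclosure}, interpreting the interval as the Boolean algebra $B_c$ living below $c$; closure under $\vee$ then follows by De Morgan, and $\to$ and $\wedge$ are term-definable. For the up-set $\{ y \in C \mid \negmin c^\ast \mleq y\}$: Lemma~\ref{joinclosed} tells us this set equals $\{ y \mid \negmin c^\ast \leq y \leq 1_{\negmin c^\ast}\}$, which by Theorem~\ref{thm:congequiv} (with the filter generated by $\negmin c^\ast$, noting $\negmin c^\ast \leq 0$ since $c^\ast \in C^+$) is a pointed subuniverse — that is the whole point of Section~\ref{sec:filters}. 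So closure under $\wedge, \vee, \cdot, \to$ and membership of $1$ and $0$ come directly from that theorem, and one only needs to check that $\negmin$ restricts correctly, i.e. $\negmin c^\ast \mleq y$ implies $\negmin c^\ast \mleq \negmin y$, which is Lemma~\ref{lem:pointedsubuniverse} applied to $a = \negmin c^\ast \leq 0$.

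**The main obstacle.** The genuinely delicate step is translating the hypotheses stated in the monoidal order $\mleq$ into the lattice order $\leq$ on $C^+$ so that the external partition result from~\cite{Sto38} can be invoked in Part (1); the identities $c = 1_c$, $\negmin c^\ast = 0_{c^\ast}$ and the equivalences $0_{c^\ast} \mleq x \iff c^\ast \mleq 1_x$ must be set up carefully, because $\mleq$ and $\leq$ disagree in general and only coincide inside a single Boolean block $B_x$. Everything in Part (2) is then a bookkeeping application of the already-proved lemmas, with the only care needed being that the two intervals use different distinguished constants ($c, \negmin c$ versus $1, 0$) as their bounds and units.
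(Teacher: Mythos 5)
Your Part (1) is sound and in substance the same as the paper's: both directions reduce, via the $\mleq$-monotonicity of $x \mapsto 0_x$ and $x \mapsto 1_x$, to the Stone partition of $C^+$ by $c$ and $c^\ast$. (One citation is off: the equivalences $x \mleq c \iff 1_x \mleq c$ and $\negmin c^\ast \mleq x \iff c^\ast \mleq 1_x$ do not come from Lemma~\ref{joinclosed}, which concerns negative elements, but from Lemma~\ref{lem:fusioncongruence} applied to $x \cdot c = x$, together with Lemma~\ref{lem:monoidalmonoton} and $0_x \mleq x$ --- which is in fact exactly how the paper argues the right-to-left direction.) The genuine gap is in Part (2), first interval: you identify $\{ x \in C \mid x \mleq c \}$ with the single Boolean block $B_c$. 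This is false: $B_c$ is the interval $\{ z \mid \negmin c \mleq z \mleq c \}$, whereas the downset of $c$ is in general a union of many blocks, namely $\bigcup \{ B_z \mid 1_z \mleq c \}$. Concretely, in the paper's own algebra $\alg{A}_1$ of Figure~\ref{fig:A1}, taking $c$ to be the atom of $C^+$, the downset $\{ x \mid x \mleq c \}$ equals $B_b \cup B_a$ and properly contains $B_c = \{ \negmin c, b, \negmin b, c \}$. Consequently your appeal to Lemma~\ref{lem:intervalnegclosure} proves closure under $\negmin$ only inside the top block and says nothing about, say, $\negmin a$ for $a$ in a lower block; as written, the closure-under-involution step fails. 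The repair is short and uses a tool you already deployed in Part (1): if $x \mleq c$, then $1_{\negmin x} = \negmin x \vee x = 1_x \mleq 1_c = c$ by Lemma~\ref{lem:fusioncongruence}(2), and $\negmin x \mleq 1_{\negmin x}$ since $\negmin x \in B_x$ by Lemma~\ref{lem:intervalnegclosure}(2); hence $\negmin x \mleq c$. This blockwise decomposition, $\{ z \mid 0_x \mleq z \mleq 1_y \} = \bigcup \{ B_z \mid 1_x \mleq z \mleq 1_y \}$, is precisely the observation the paper's proof of Part (2) rests on.

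Otherwise your Part (2) is fine, and for the second interval you take a genuinely different route from the paper: the paper proves a general fact that any interval $\{ z \mid 0_x \mleq z \mleq 1_y \}$ with $1_x \mleq 1_y$ is a subalgebra, using the blockwise closure argument above, whereas you rewrite the upset via Lemma~\ref{joinclosed} as $\{ y \mid \negmin c^\ast \leq y \leq 1_{\negmin c^\ast} \}$ and invoke Theorem~\ref{thm:congequiv} together with the corollary to Lemma~\ref{lem:pointedsubuniverse} with $a = \negmin c^\ast \leq 0$. That is a legitimate and arguably more conceptual derivation --- it is exactly the use of the filter machinery that Section~\ref{sec:filters} advertises, and it gives closure under $\wedge, \vee, \cdot, \to$ and membership of $0$ and $1$ in one stroke, with $\negmin$ then term-definable as $y \to 0$. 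Two small further remarks: your deduction order for the first interval (``$\vee$ by De Morgan, then $\wedge$ term-definable'') is mildly circular but harmless, since $(x \vee y) \cdot c = (x \cdot c) \vee (y \cdot c) = x \vee y$ gives closure under $\vee$ directly; and, like the paper, you leave implicit the verification that the first structure satisfies residuation relative to the shifted constants $c$ and $\negmin c$ (the key identity being $x \to \negmin c = \negmin (x \cdot c) = \negmin x$ for $x \mleq c$), which matches the level of detail of the paper's ``easily checked.''
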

\begin{proof}
(1) Consider $x \in C$. For the left-to-right direction, suppose that $\negmin c^\ast \mleq x$. For a contradiction, suppose that $x \mleq c$. But then $\negmin c^\ast \mleq c$, contradicting that $\{ x \in C^+ \mid x \mleq c \} \cap \{ x \in C^+ \mid c^\ast \mleq x \} = \emptyset$.
For the right-to-left direction suppose that $x \not\mleq c$. Since $x\mleq 1_x$, we have $1_x \not\mleq c$ and, by choice of $c^\ast$, $c^\ast \mleq 1_x$. It follows from Lemma~\ref{lem:monoidalmonoton} that $\negmin c^\ast \mleq 0_x$ and hence $\negmin c^\ast \mleq 0_x \mleq x$.

(2) This is an instance of a more general fact: for any $\alg{A} \in \CIdInRL$ and $x,y \in A$ such that $1_x \mleq 1_y$ (or, equivalently, $0_x \mleq 1_y$), $\{ z \in A \mid 0_x \mleq z \mleq 1_y \}$ forms a subalgebra, with constants $0_y$ and $1_y$. Indeed, closure under ${\cdot}$ and ${\lor}$ is straightforward, and closure under ${\negmin}$ follows from the fact that $\{ z \in A \mid 0_x \mleq z \mleq 1_y \} = \bigcup \{ B_z \mid 1_x \mleq z \mleq 1_y \}$. The residuation law can be easily checked.
\end{proof}

We now define algebras $\alg{A} \coloneqq \langle \{ x \in C \mid x \mleq c \}, \wedge, \vee, \cdot, \to, c, \negmin c \rangle$ and $\alg{B} \coloneqq \langle \{ y \in C \mid \negmin c^\ast \mleq y \}, \wedge, \vee, \cdot, \to, 1, 0 \rangle$, elements $a \coloneqq c \cdot \negmin c^\ast$ and $b \coloneqq (c \wedge \negmin a) \vee \negmin c^\ast$ and maps
\begin{itemize}
	\item $\varphi \colon \{ x \in C \mid a \mleq x \mleq c \} \to \{ y \in C \mid \negmin c^\ast \mleq y \mleq b \}$ where $\varphi(x) \coloneqq (x \wedge \negmin a) \vee \negmin c^\ast$;
	\item $\varphi^{-1} \colon \{ y \in C \mid \negmin c^\ast \mleq y \mleq b \} \to \{ x \in C \mid a \mleq x \mleq c \}$ where $\varphi^{-1}(y) \coloneqq y \cdot c$.
\end{itemize}

The rest of the section is dedicated to proving that $\alg{C} = \alg{A} \oplus_\varphi \alg{B}$. We start by showing that the defined elements and map satisfy the prerequisites of the construction.

\begin{lemma}\label{lem:propab}
For the elements $a$ and $b$ as defined above, 
	\begin{enumerate}
		\item $a \in A^-$ and $a\nleq 0^A$;
		\item $b \in B^-$, $c\cdot 0=-c\vee a$ and $0\leq b$.
	\end{enumerate}
\end{lemma}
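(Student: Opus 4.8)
The plan is to verify the four assertions in Lemma~\ref{lem:propab} one at a time, working inside the ambient algebra $\alg{C}$ and using the definitions $\alg{A} = \langle \{ x \in C \mid x \mleq c \}, \dots, c, \negmin c \rangle$ and $\alg{B} = \langle \{ y \in C \mid \negmin c^\ast \mleq y \}, \dots, 1, 0 \rangle$, together with $a = c \cdot \negmin c^\ast$ and $b = (c \wedge \negmin a) \vee \negmin c^\ast$. The key point to keep straight is that $\alg{A}$ is not a subalgebra of the whole $\alg{C}$ but a subalgebra with its \emph{own} bounds: its top in the monoidal order is $1^A = c$ and its bottom constant is $0^A = \negmin c$. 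Hence $A^-$ means $\{x \mid x \mleq^A 1^A\} = \{x \mid x \mleq c\}$, and $0^A = \negmin c$, so the claim $a \in A^-$ with $a \nleq 0^A$ translates to $a \mleq c$ together with $a \nleq \negmin c$.

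For part (1), I would first observe $a = c \cdot \negmin c^\ast \mleq c$ directly, since $\cdot$ is the meet of $\mleq$, giving $a \in A^-$. For $a \nleq 0^A = \negmin c$, I would argue by contradiction: if $a = c \cdot \negmin c^\ast \leq \negmin c$, then combining with $a \leq c$ would force $a \leq c \wedge \negmin c = 0_c$, and since $c = 1_c$ is positive we have $0_c \leq 0$; but I would then derive from $\negmin c^\ast \mleq a$ (which needs checking) or from a monoidal-order computation that this collapses $c$ and $c^\ast$ in a way incompatible with the atom splitting $\{x \in C^+ \mid x \mleq c\} \cap \{x \in C^+ \mid c^\ast \mleq x\} = \emptyset$. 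Concretely, I expect to use Lemma~\ref{lem:fusioncongruence} to compute $0_a = 0_{c \cdot \negmin c^\ast} = 0_c \cdot 0_{\negmin c^\ast}$ and compare against the separation property; the atom $c$ being strictly above $1$ is what prevents $a$ from sinking to the bottom.

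For part (2), the statement $b \in B^-$ means $b \mleq 1^B = 1$, equivalently $b \leq 1$ in the lattice order restricted to $B$; since $b = (c \wedge \negmin a) \vee \negmin c^\ast$ and $\negmin c^\ast = 0_{c^\ast}$ is negative, I would check $b$ lies in the negative cone of $\alg{B}$ by verifying $\negmin c^\ast \mleq b \mleq 1$. The identity $c \cdot 0 = \negmin c \vee a$ is a direct computation: using $0_c = c \cdot \negmin c$ and the fact (from Lemma~\ref{joinclosed}, noting $\negmin c \in A^-$) that $\negmin c \cdot 0 = 0_{\negmin c}$, I would expand $c \cdot 0$ and rewrite via De Morgan and the definition $a = c \cdot \negmin c^\ast$. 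Finally $0 \leq b$ should follow from $0 = 0^B$ being the designated zero of $\alg{B}$ together with $\negmin c^\ast \mleq b$ and the relationship between $0_b$ and the bounds established by Lemma~\ref{lem:intervalnegclosure}(3).

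The main obstacle I anticipate is the bookkeeping between the three different bottom elements in play — $0$ (the zero of $\alg{C}$), $0^A = \negmin c$, and $0^B = 0$ — and making sure each inequality is interpreted in the correct algebra's order; in particular the subtle claim $a \nleq 0^A$ in part (1) is the crux, because it is exactly the nondegeneracy condition the gluing construction demands, and it relies essentially on $c$ being a genuine atom of $C^+$ rather than equal to $1$. I would therefore spend most care on translating $a \nleq \negmin c$ into a statement about the monoidal separation of $c$ and $c^\ast$ and deriving the contradiction from the disjointness of the two upsets.
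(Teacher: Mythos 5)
Your proposal rests on a conceptual error and, beyond that, misses the one substantive idea that the paper's proof of part (2) turns on. The negative cone is defined by the \emph{lattice} order, not the monoidal one: $A^- = \{x \in A \mid x \leq 1^A\}$ and $B^- = \{y \in B \mid y \leq 1\}$. Your reading $A^- = \{x \mid x \mleq^A 1^A\}$ would make $A^- = A$, and the condition $b \mleq 1^B$ is vacuous for \emph{every} element (since $y \cdot 1 = y$ always), so under your reading both cone claims are trivialized and nothing is proved. With the correct definition your first step fails: $a = c \cdot \negmin c^\ast \mleq c$ only says $a \in A$, and $\mleq$ does not imply $\leq$ in $\CIdInRL$ (in $\alg{A}_1$ of Figure~\ref{fig:A1} one has $\top \mleq b$ but $\top \nleq b$). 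The paper instead obtains $a \leq c$ from monotonicity of fusion and $\negmin c^\ast = 0_{c^\ast} \leq 0 \leq 1$: namely $a = c \cdot \negmin c^\ast \leq c \cdot 0 \leq c \cdot 1 = c$. For $a \nleq 0^A$ your intended contradiction is the right one, but your proposed route (computing $0_a = 0_c \cdot 0_{\negmin c^\ast}$ via Lemma~\ref{lem:fusioncongruence} and ``comparing against the separation property'') is never carried out and it is unclear how it would close. The paper's argument is a short residuation computation: $a \leq \negmin c$ gives $a \cdot c \leq 0$, hence $c \cdot \negmin c^\ast = c \cdot c \cdot \negmin c^\ast = c \cdot a \leq 0$, so $c \leq c^\ast$; since positive elements multiply as join, this makes $c^\ast \mleq c$, putting $c^\ast$ in both cells of the splitting and contradicting disjointness. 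Note that atomicity of $c$ is not what is used here, contrary to your closing remark --- only the disjointness of the two upsets.

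For part (2) you have no viable path to $c \cdot 0 = \negmin c \vee a$. The engine of the paper's proof is the identity $0 = \negmin c \vee \negmin c^\ast$, equivalently $c \wedge c^\ast = 1$, and this is exactly where atomicity of $c$ in $C^+$ enters (via Theorem~\ref{distriblattice}, $\negmin c$ is a coatom of $\{x \mid x \leq 0\}$; were $c \wedge c^\ast \neq 1$, atomicity would force $c \leq c^\ast$, violating the splitting). Granted this, the computation is immediate: $c \cdot 0 = c \cdot (\negmin c \vee \negmin c^\ast) = (c \cdot \negmin c) \vee (c \cdot \negmin c^\ast) = \negmin c \vee a$, using $c \cdot \negmin c = 0_c = \negmin c$ for positive $c$; then $0 \leq c \cdot 0 = \negmin c \vee a$ yields $c \wedge \negmin a \leq 1$ by De Morgan, whence $b \leq 1$; and $0 \leq b$ follows from $(c \cdot 0) \vee c = c \cdot (0 \vee 1) = c$, which gives $\negmin c \leq \negmin(c \cdot 0) = c \wedge \negmin a$ and so $0 = \negmin c \vee \negmin c^\ast \leq (c \wedge \negmin a) \vee \negmin c^\ast = b$. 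Your cited tools --- $0_c = c \cdot \negmin c$, $\negmin c \cdot 0 = 0_{\negmin c}$ from Lemma~\ref{joinclosed}, and Lemma~\ref{lem:intervalnegclosure}(3) --- do not supply the decomposition of $0$, so the ``direct computation'' you anticipate does not exist as sketched; this missing identity is the genuine gap in your plan.
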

\begin{proof}
(1) We have $a\leq 1^A=c$ since $a = c \cdot \negmin c^\ast \leq c \cdot 0 \leq c \cdot 1 = c$. For the other claim, assume for a contradiction that $a \leq 0^A = \negmin c$. By residuation $a\cdot c\leq 0$, hence $c \cdot \negmin c^\ast = c \cdot c \cdot \negmin c^\ast = c \cdot a \leq 0$, which implies $c \leq c^\ast$, a contradiction.
	
(2) Note that $b \leq 1$ is equivalent to $c \wedge \negmin a \leq 1$ together with $\negmin c^\ast \leq 1$. The latter inequality follows as $\negmin c^\ast \leq 0 \leq 1$. For the former, note that $-c$ is a coatom in $\{0_x\mid x\in C\}$ by Theorem~\ref{distriblattice}, hence $0=0_c\vee 0_{c^*}=-c\vee -c^*$ and it follows that
$$0 = 1 \cdot 0 \leq c \cdot 0 = c \cdot (\negmin c \vee \negmin c^\ast) = (c \cdot \negmin c) \vee (c \cdot \negmin c^\ast) = \negmin c \vee (c \cdot \negmin c^\ast) = \negmin c \vee a.$$
Therefore $c\wedge -a\leq 1$. To show $0\leq b$, we note that 
$(c\cdot 0)\vee c=c\cdot (0\vee 1)=c$, hence $0=-c\vee -c^*\leq-(c\cdot 0)\vee -c^*=(c\wedge -a)\vee-c^*=b$.
\end{proof}

We need a couple of technical properties, which we prove in a separate lemma.

\begin{lemma}\label{lem:technicalpropAB} 
\begin{enumerate}
	\item For all $y \in B$, $\varphi(y \cdot c) = y \cdot b$.
	\item For all $x \in A$, $y \in B$, $x \leq \negmin a$ if and only if $x \vee y \in B$.
\end{enumerate}
\end{lemma}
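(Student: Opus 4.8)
The plan is to prove each of the two technical properties by unwinding the definitions of $\varphi$, $a$, $b$, and the constituent algebras $\alg A$ and $\alg B$, and then to reduce everything to the absorption and fusion-preservation lemmas (Lemma~\ref{lem:absorption} and Lemma~\ref{lem:fusionpreserv}) applied inside $\alg C$. Throughout I would keep in mind that $\alg A$ has top $c = 1^A$ and bottom-involution constants $0^A = \negmin c$, that $a = c \cdot \negmin c^\ast$, and that $b = (c \wedge \negmin a) \vee \negmin c^\ast = \varphi(a \vee^A 0^A)$, so that $\negmin c^\ast = 0^B$ and $0_b$ is recoverable from Lemma~\ref{lem:propab}.

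For part (1), I would fix $y \in B$, so $\negmin c^\ast \mleq y$, and compute $\varphi(y \cdot c)$ directly from the definition $\varphi(x) = (x \wedge \negmin a) \vee \negmin c^\ast$. Since $\varphi^{-1}(y) = y \cdot c$, the point is really that $\varphi$ and the map $y \mapsto y \cdot c$ are mutually inverse and that $\varphi(\varphi^{-1}(y)) = y \cdot b$; here I expect the key computation to be $(\,(y \cdot c) \wedge \negmin a\,) \vee \negmin c^\ast = y \cdot b$. I would first check that $y \cdot c$ indeed lands in the domain $\{x \mid a \mleq x \mleq c\}$ of $\varphi$ (which follows since $c$ is the top of $\alg A$ and $a \mleq \varphi^{-1}(y)$ as noted in the construction), and then use Lemma~\ref{lem:absorption} and Lemma~\ref{lem:fusionpreserv}(2) to distribute the meet with $\negmin a$ across the fusion, finally reassembling with $\negmin c^\ast = 0^B$ to recognize the right-hand side as $y \cdot b$. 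The delicate bookkeeping is ensuring all elements lie in the right intervals so that the cited lemmas apply.

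For part (2), the claim is that for $x \in A$ and $y \in B$, we have $x \leq \negmin a$ iff $x \vee y \in B$. Here $x \vee y$ is computed by the gluing join, and by the case split in the definition of $\vee$, the value $x \vee y$ lies in $B$ precisely in the branch governed by the condition $x \leq^A \negmin a$ (in that branch $x \vee y = \varphi(x \vee^A a) \vee^B y \in B$), whereas in the other branch $x \not\leq^A \negmin a$ the join is computed as $x \vee^A \varphi^{-1}(y \cdot^B b) \in A$. So the forward direction should be essentially immediate from the definition once I confirm $x \leq \negmin a$ in $\alg C$ agrees with $x \leq^A \negmin a$ in $\alg A$, using that $\alg A$ is the subalgebra $\{x \mid x \mleq c\}$ and that negation and lattice order are inherited; the converse follows by contraposition, since the only way to leave $B$ is through the second branch. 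I would take care to phrase this so it does not circularly invoke the gluing join definition that is being used to reconstruct $\alg C$, instead arguing directly in $\alg C$ that $x \vee y \in B = \{z \mid \negmin c^\ast \mleq z\}$ is equivalent to $\negmin c^\ast \mleq x \vee y$, and then showing this monoidal-order condition holds iff $x \leq \negmin a$.

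The main obstacle I anticipate is part (2): establishing the equivalence directly in $\alg C$ requires understanding how $\negmin c^\ast \mleq x \vee y$ interacts with the lattice join when $x$ and $y$ straddle the two Boolean-algebra blocks, and in particular showing that $x \not\leq \negmin a$ forces $x \vee y \mleq$-below $c$ (so that $x \vee y \notin B$ by part~(1) of the preceding lemma). I would reduce this to the monotonicity of $0_{(-)}$ under $\mleq$ (Lemma~\ref{lem:monoidalmonoton}) together with the disjointness of the two filter-intervals coming from the atom $c$ and its complement $c^\ast$, exactly as in the proof of part~(1) of the previous lemma, so that the $\mleq$-position of $x \vee y$ relative to $c$ is controlled by whether $x$ crosses the threshold $\negmin a$.
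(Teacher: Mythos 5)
Your part (1) is essentially the paper's proof: the key identity you isolate, $\bigl((y \cdot c) \wedge \negmin a\bigr) \vee \negmin c^\ast = y \cdot b$, is exactly the computation the paper performs, using Lemma~\ref{lem:absorption}, distributivity of $\cdot$ over $\vee$, and the facts $y \cdot a = a$ and $y \cdot \negmin c^\ast = \negmin c^\ast$ (both from $\negmin c^\ast \mleq y$); your route through Lemma~\ref{lem:fusionpreserv}(2) also goes through, since $a \mleq y$ holds for every $y \in B$. But you should drop the ``mutually inverse'' framing: in the paper $\varphi(\varphi^{-1}(y)) = y$ (for $y \mleq b$) is \emph{deduced from} this very lemma (Lemma~\ref{lem:phibijective}(2)), so invoking it here is circular, and in any case part (1) is asserted for \emph{all} $y \in B$, where $y \cdot b \neq y$ in general (take $y = 1$), so inverseness alone could never yield it. Two factual slips as well: $0^B$ is the constant $0$ of $\alg{C}$ (indeed $0_b = 0$), whereas $\negmin c^\ast = 0_{c^\ast}$ is only the $\mleq$-bottom of $B$, so $\negmin c^\ast \neq 0^B$ in general; and the prerequisite of the gluing is $0_b = \varphi(a \vee^A 0^A)$, not $b = \varphi(a \vee^A 0^A)$ (in the example $\alg{A}_1$ one has $b = x_1 \neq 0 = \varphi(a \vee 0^A)$).

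The genuine gap is in part (2). You correctly diagnose that arguing through the case split in the gluing join would be circular, and you correctly reformulate the target as the equivalence $\negmin c^\ast \mleq x \vee y \iff x \leq \negmin a$, established directly in $\alg{C}$. But your proposed tools for that equivalence --- Lemma~\ref{lem:monoidalmonoton} and the $c$/$c^\ast$ dichotomy of the preceding lemma --- do not produce it: the dichotomy merely translates $x \vee y \in B$ into $x \vee y \not\mleq c$, which leaves you exactly where you started, and monotonicity of $0_{(-)}$ gives no handle on how $x \vee y$ relates to the threshold $\negmin a$. The missing step, which is the entire content of the paper's proof, is a short residuation computation: $\negmin c^\ast \cdot (x \vee y) = (\negmin c^\ast \cdot x) \vee (\negmin c^\ast \cdot y) = (\negmin c^\ast \cdot x) \vee \negmin c^\ast \geq \negmin c^\ast$, so $\negmin c^\ast \mleq x \vee y$ holds iff $\negmin c^\ast \cdot x \leq \negmin c^\ast$; since $x \mleq c$ gives $a \cdot x = \negmin c^\ast \cdot c \cdot x = \negmin c^\ast \cdot x$, and $c^\ast \cdot \negmin c^\ast = 0_{c^\ast} = \negmin c^\ast$, two applications of residuation convert $\negmin c^\ast \cdot x \leq \negmin c^\ast$ into $a \cdot x \leq 0$, i.e.\ $x \leq \negmin a$. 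Without this (or an equivalent) computation, your part (2) remains an assertion rather than a proof.
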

\begin{proof}
(1) Consider $y \in B$. Then,
\begin{align*}
\varphi(y \cdot c)	&= ((y \cdot c) \wedge \negmin a) \vee \negmin c^\ast \\
&= \big((y \cdot [(c \wedge \negmin a) \vee a]) \wedge \negmin a \big) \vee \negmin c^\ast \\
&= \big([(y \cdot (c \wedge \negmin a)) \vee (y \cdot a)] \wedge \negmin a \big) \vee \negmin c^\ast \\
&= \big([(y \cdot (c \wedge \negmin a)) \vee a] \wedge \negmin a \big) \vee \negmin c^\ast \\
&= (y \cdot (c \wedge \negmin a)) \vee \negmin c^\ast \\
&= (y \cdot (c \wedge \negmin a)) \vee y \cdot \negmin c^\ast \\
&= y \cdot [(c \wedge \negmin a) \vee \negmin c^\ast] \\
&= y \cdot b.
\end{align*}
The fourth equality follows as $y \cdot a = y \cdot c \cdot \negmin c^\ast = c \cdot \negmin c^\ast = a$. The fifth equality follows since $y \cdot (c \land \negmin a) = ((y \cdot (c \land \negmin a)) \vee a ) \land \negmin a$ by an application of Lemma~\ref{lem:absorption}.

(2) As $\negmin c^\ast \cdot (x \vee y) = (\negmin c^\ast \cdot x) \vee (\negmin c^\ast \cdot y) = (\negmin c^\ast \cdot x) \vee \negmin c^\ast$,  it always holds that $\negmin c^\ast \leq \negmin c^\ast \cdot (x \vee y)$. So $\negmin c^\ast \mleq x \vee y$ is in turn equivalent to $\negmin c^\ast \cdot (x \vee y) \leq \negmin c^\ast$, which is equivalent to $\negmin c^\ast \cdot x \leq \negmin c^\ast$. Via residuation, this is equivalent to $a \cdot x = \negmin c^\ast \cdot c \cdot x = \negmin c^\ast \cdot x = c^\ast \cdot \negmin c^\ast \cdot x \leq 0$ which by residuation again is equivalent to $x \leq \negmin a$.
\end{proof}

\begin{lemma}
The functions $\varphi$ and $\varphi^{-1}$ as defined above are well-defined.
\end{lemma}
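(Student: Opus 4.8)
The plan is to verify the two codomain conditions separately: that $\varphi$ maps $\{x\in C\mid a\mleq x\mleq c\}$ into $\{y\in C\mid \negmin c^\ast\mleq y\mleq b\}$, and that $\varphi^{-1}$ maps the latter interval into the former. Throughout I would use three standing facts, all immediate from the setup: that $\alg{A}$ is a subalgebra of $\alg{C}$, so its universe $\{z\in C\mid z\mleq c\}$ is closed under $\negmin$ and $\wedge$ (in particular $\negmin a\in A$ and $x\wedge\negmin a\in A$, i.e. $x\wedge\negmin a\mleq c$, for each $x\in A$); that $a=\negmin c^\ast\cdot c$ and $a\mleq c$ (from $a\cdot c=c\cdot\negmin c^\ast\cdot c=c\cdot\negmin c^\ast=a$ by idempotence); and that $\negmin c^\ast\in B$ since $\negmin c^\ast\mleq\negmin c^\ast$.

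The map $\varphi^{-1}(y)=y\cdot c$ is the easy direction. Since $\cdot$ is the meet of $\mleq$, we get $y\cdot c\mleq c$ automatically, and monotonicity of $\cdot$ applied to $\negmin c^\ast\mleq y$ gives $a=\negmin c^\ast\cdot c\mleq y\cdot c$. Hence $a\mleq\varphi^{-1}(y)\mleq c$. For $\varphi(x)=(x\wedge\negmin a)\vee\negmin c^\ast$, I first treat the lower bound $\negmin c^\ast\mleq\varphi(x)$. Rather than expand $\negmin c^\ast\cdot\varphi(x)$ directly (which awkwardly mixes the monoidal order with the lattice join), I would apply Lemma~\ref{lem:technicalpropAB}(2) to the element $x\wedge\negmin a\in A$ and $\negmin c^\ast\in B$: since $x\wedge\negmin a\leq\negmin a$ holds trivially, that lemma yields $(x\wedge\negmin a)\vee\negmin c^\ast\in B$, which is exactly $\negmin c^\ast\mleq\varphi(x)$.

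The upper bound $\varphi(x)\mleq b$ is the main obstacle, precisely because distributing $\cdot$ over $\vee$ in $\varphi(x)\cdot b$ produces cross terms comparable to $\negmin c^\ast$ only in the monoidal order $\mleq$, not in the lattice order $\leq$ one would need to absorb them into the joins. I would sidestep this with two moves. First, a short computation shows $\varphi(x)\cdot c=x$: distributing gives $\varphi(x)\cdot c=((x\wedge\negmin a)\cdot c)\vee(\negmin c^\ast\cdot c)$, and since $x\wedge\negmin a\mleq c$ the first summand collapses to $x\wedge\negmin a$ while the second is $a$, so $\varphi(x)\cdot c=(x\wedge\negmin a)\vee a=x$ by the absorption identity of Lemma~\ref{lem:absorption} (applicable since $a\mleq x$). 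Second, having already shown $\varphi(x)\in B$, I would apply Lemma~\ref{lem:technicalpropAB}(1) to $y:=\varphi(x)$, obtaining $\varphi(\varphi(x)\cdot c)=\varphi(x)\cdot b$; the left-hand side equals $\varphi(x)$ by the first move, so $\varphi(x)=\varphi(x)\cdot b$, i.e. $\varphi(x)\mleq b$. This completes well-definedness of $\varphi$.

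Finally, to justify the notation $\varphi^{-1}$ I would note that the same two identities show the maps are mutually inverse: the computation $\varphi(x)\cdot c=x$ gives $\varphi^{-1}\circ\varphi=\mathrm{id}$ on $\{x\mid a\mleq x\mleq c\}$, and for $\negmin c^\ast\mleq y\mleq b$ Lemma~\ref{lem:technicalpropAB}(1) gives $\varphi(\varphi^{-1}(y))=\varphi(y\cdot c)=y\cdot b=y$, using $y\mleq b$, so $\varphi\circ\varphi^{-1}=\mathrm{id}$ as well. Thus both functions are well-defined and genuinely inverse to one another.
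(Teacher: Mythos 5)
Your proof is correct, but for the two nontrivial claims about $\varphi$ it takes a genuinely different route from the paper. For the lower bound $\negmin c^\ast \mleq \varphi(x)$, the paper uses Lemma~\ref{joinclosed} to reduce the monoidal inequality to the lattice-interval condition $\negmin c^\ast \leq \varphi(x) \leq c^\ast$, verified via $\negmin c^\ast \leq a$ and hence $\negmin a \leq c^\ast$; you instead invoke Lemma~\ref{lem:technicalpropAB}(2) at $x \wedge \negmin a \leq \negmin a$, which is legitimate and non-circular since that lemma precedes this one and is a pure formula computation. For the upper bound $\varphi(x) \mleq b$, the paper expands $\varphi(x)\cdot b$ into four summands, absorbs the cross terms by the \emph{lattice-order} bound $(x\wedge\negmin a)\cdot\negmin c^\ast\leq\negmin a\cdot\negmin c^\ast\leq c^\ast\cdot\negmin c^\ast=\negmin c^\ast$, and finishes with Lemma~\ref{lem:fusionpreserv}(2) --- so your stated motivation (that the cross terms are controllable only in the monoidal order) mischaracterizes the difficulty: the direct computation does go through in the lattice order. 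Your alternative is nevertheless valid and arguably more economical: proving $\varphi(x)\cdot c=x$ first and then bootstrapping $\varphi(x)=\varphi(\varphi(x)\cdot c)=\varphi(x)\cdot b$ from Lemma~\ref{lem:technicalpropAB}(1) reuses the computation that the paper only performs later, in Lemma~\ref{lem:phibijective}(1), so you obtain the mutual-inverse statements (the paper's next lemma) essentially for free. One step deserves more care: closure of $A$ under $\negmin$ and hence $x\wedge\negmin a\in A$ does not follow merely from $\alg{A}$ being a subalgebra in the signature $\wedge,\vee,\cdot,\to,c,\negmin c$, whose designated negation is relative to the constant $\negmin c$ rather than $0$; it does hold, because the proof of the paper's subalgebra lemma shows that $\{z\in C \mid z\mleq c\}$ is a union of the Boolean components $B_z$ and is therefore closed under the global involution of $\alg{C}$ --- with that citation made precise, your argument stands in full.
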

\begin{proof}
To show that $\varphi^{-1}$ is well-defined, we assume $y \in C$ satisfies $\negmin c^\ast \mleq y \mleq b$, and we need to show that $a \mleq \varphi^{-1}(y) \mleq c$. It is immediate that $\varphi^{-1}(y)=y \cdot c \mleq c$. Moreover, $a \cdot y \cdot c = c \cdot \negmin c^\ast \cdot y \cdot c = c \cdot \negmin c^\ast=a$ hence $a\mleq\varphi^{-1}(y)$.

To prove that $\varphi(x)=(x\wedge -a)\vee -c^*$ is well-defined, we assume $x \in C$ satisfies $a \mleq x \mleq c$ and show that $\negmin c^\ast \mleq \varphi(x) \mleq b$. Firstly note that since $\negmin c^\ast \leq 1$, $\negmin c^\ast \mleq \varphi(x)$ is equivalent to $\negmin c^\ast \leq \varphi(x) \leq 1_{-c^*}=c^\ast$  by Lemma~\ref{joinclosed}. It is immediate that $\negmin c^\ast \leq (x \wedge \negmin a) \vee \negmin c^\ast = \varphi(x)$. Moreover, note that $\varphi(x) \leq c^\ast$ is equivalent to $\negmin c^\ast \leq c^\ast$ together with $x \wedge \negmin a \leq c^\ast$. Obviously $\negmin c^\ast \leq c^\ast$ holds. For the other statement, note that $1 \leq c$ implies that $\negmin c^\ast \leq c \cdot \negmin c^\ast = a$. It follows that $\negmin a \leq c^\ast$ and so $x \wedge \negmin a \leq \negmin a \leq c^\ast$ as required.

To show that $\varphi(x) \mleq b$, we consider $\varphi(x) \cdot b$.
\begin{align*}
\varphi(x) \cdot b &= [(x \wedge \negmin a) \vee \negmin c^\ast] {\cdot} [(c \wedge \negmin a) \vee \negmin c^\ast] \\
	&= [(x \wedge \negmin a) {\cdot} (c \wedge \negmin a)] \vee [(x \wedge \negmin a) {\cdot} \negmin c^\ast] \vee [(c \wedge \negmin a) {\cdot} \negmin c^\ast] \vee [\negmin c^\ast {\cdot} \negmin c^\ast] \\
	&= [(x \wedge \negmin a) {\cdot} (c \wedge \negmin a)] \vee \negmin c^\ast \\
	&= [(x \cdot c) \wedge \negmin a] \vee \negmin c^\ast \\
	&= [x \wedge \negmin a] \vee \negmin c^\ast \\
	&= \varphi(x).
\end{align*}
The third equality follows from the fact that $(x \wedge \negmin a) \cdot \negmin c^\ast \leq \negmin a \cdot \negmin c^\ast \leq c^\ast \cdot \negmin c^\ast = \negmin c^\ast$, and similarly, $(c \wedge \negmin a) \cdot \negmin c^\ast \leq \negmin c^\ast$. The fourth equality follows from Lemma~\ref{lem:fusionpreserv}(2). 
\end{proof}

\begin{lemma}\label{lem:phibijective}
The function $\varphi$ is a bijection with $\varphi^{-1}$ as its inverse, i.e.,
\begin{enumerate}
	\item for $a \mleq x \mleq c$, $\varphi^{-1}(\varphi(x))= x$;
	\item for $\negmin c^\ast \mleq y \mleq b$, $\varphi(\varphi^{-1}(y))= y$.
\end{enumerate}
\end{lemma}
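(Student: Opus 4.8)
The plan is to verify directly that the two composites $\varphi^{-1}\circ\varphi$ and $\varphi\circ\varphi^{-1}$ are the identity on their respective domains, using the explicit formulas $\varphi(x)=(x\wedge\negmin a)\vee\negmin c^\ast$ and $\varphi^{-1}(y)=y\cdot c$ together with the absorption and preservation lemmas already established. Since both maps were shown to be well-defined, the domains and codomains match up correctly, so it remains only to compute the composites.

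For part (1), I would start from $\varphi^{-1}(\varphi(x))=\varphi(x)\cdot c=[(x\wedge\negmin a)\vee\negmin c^\ast]\cdot c$ and distribute $\cdot$ over $\vee$ to get $[(x\wedge\negmin a)\cdot c]\vee[\negmin c^\ast\cdot c]$. The second summand simplifies via $\negmin c^\ast\cdot c=c\cdot\negmin c^\ast=a$. For the first summand I would use Lemma~\ref{lem:fusionpreserv}(2) (with the roles of the elements chosen so that the two factors both lie above $a$ in the monoidal order) to pull the meet outside, reducing $(x\wedge\negmin a)\cdot c$ to $(x\cdot c)\wedge\negmin a$; since $x\mleq c$ we have $x\cdot c=x$, giving $(x\wedge\negmin a)\cdot c=x\wedge\negmin a$. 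Thus $\varphi^{-1}(\varphi(x))=(x\wedge\negmin a)\vee a$, and a final application of Lemma~\ref{lem:absorption} (in the form $(x\wedge\negmin a)\vee a=x$, valid since $a\mleq x$) yields $x$.

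For part (2), I would compute $\varphi(\varphi^{-1}(y))=\varphi(y\cdot c)$ and immediately invoke Lemma~\ref{lem:technicalpropAB}(1), which states $\varphi(y\cdot c)=y\cdot b$. It then remains to see that $y\cdot b=y$ for $\negmin c^\ast\mleq y\mleq b$; but $y\mleq b$ means precisely $y\cdot b=y$ by the definition of the monoidal order, so this is immediate. This direction is therefore essentially a one-line consequence of the technical lemma.

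The main obstacle is part (1): care is needed to check that the hypotheses of Lemma~\ref{lem:fusionpreserv}(2) genuinely apply to the factors $x\wedge\negmin a$ and $c$, i.e.\ that both lie at or above $a$ in the monoidal order, and that the absorption identity is applied with its side condition $a\mleq x$ satisfied. These monoidal-order bounds were verified in the well-definedness lemma, so the work is really a matter of assembling those facts in the right sequence rather than proving anything new. Part (2) presents no difficulty once Lemma~\ref{lem:technicalpropAB}(1) is available.
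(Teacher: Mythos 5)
Your computation follows the paper's own proof almost line by line: distribute $\cdot$ over $\vee$, simplify $\negmin c^\ast \cdot c = a$, reduce $(x \wedge \negmin a)\cdot c$ to $x \wedge \negmin a$, and finish with absorption $(x \wedge \negmin a) \vee a = x$; your part (2) is verbatim the paper's one-line argument via Lemma~\ref{lem:technicalpropAB}(1) and $y \mleq b$. But the one step you justify differently has a genuine (though repairable) gap. Lemma~\ref{lem:fusionpreserv}(2) does not reduce $(x \wedge \negmin a)\cdot c$ to $(x \cdot c) \wedge \negmin a$. Applied to the factors $x$ and $c$ (both of which do satisfy the monoidal bound, since $a \cdot c = c \cdot \negmin c^\ast \cdot c = a$ gives $a \mleq c$), it yields $(x \cdot c) \wedge \negmin a = (x \wedge \negmin a)\cdot(c \wedge \negmin a)$: the meet with $\negmin a$ lands on \emph{both} factors. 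You cannot silently replace $c \wedge \negmin a$ by $c$, since $c \leq \negmin a$ fails outright: it would force $a \leq \negmin c = 0^A$, contradicting the standing ingredient $a \nleq 0^A$. So what the lemma actually hands you is $x \wedge \negmin a = (x \wedge \negmin a)\cdot(c \wedge \negmin a)$, which is not yet the identity $(x \wedge \negmin a)\cdot c = x \wedge \negmin a$ that your chain needs.

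The gap is small and there are two ways to close it. The paper's way: observe directly that $x \wedge \negmin a \mleq c$, because $A = \{ z \in C \mid z \mleq c \}$ is closed under $\wedge$ and contains $\negmin a$ --- concretely, $\negmin a \mleq 1_a$ and $1_a = 1_a \cdot 1_c = 1_a \cdot c$ by Lemma~\ref{lem:fusioncongruence}(2) together with $1_c = c$, so $\negmin a \mleq c$; then $(x \wedge \negmin a)\cdot c = x \wedge \negmin a$ is immediate from the definition of $\mleq$. Alternatively, your route can be rescued by a short bridge: from your instance of Lemma~\ref{lem:fusionpreserv}(2), $x \wedge \negmin a = (x \wedge \negmin a)\cdot(c \wedge \negmin a) \leq (x \wedge \negmin a)\cdot c$; conversely, $(x \wedge \negmin a)\cdot c \leq x \cdot c = x$, and $(x \wedge \negmin a)\cdot c \cdot a = (x \wedge \negmin a)\cdot a \leq \negmin a \cdot a \leq 0$ gives $(x \wedge \negmin a)\cdot c \leq \negmin a$ by residuation, so $(x \wedge \negmin a)\cdot c \leq x \wedge \negmin a$ and the two orders agree here. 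With either patch inserted, the rest of your proposal --- the simplification $\negmin c^\ast \cdot c = a$, the side condition $a \mleq x$ for Lemma~\ref{lem:absorption}, and all of part (2) --- is correct and coincides with the paper's proof.
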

\begin{proof}
(1) Let $a \mleq x \mleq c$. Then,
	\begin{align*}
	\varphi^{-1}(\varphi(x)) &= \varphi^{-1}((x \wedge \negmin a) \vee \negmin c^\ast) \\
		&= [(x \wedge \negmin a) \vee \negmin c^\ast] \cdot c \\
		&= ((x \wedge \negmin a) \cdot c) \vee (\negmin c^\ast \cdot c) \\
		&= ((x \wedge \negmin a) \cdot c) \vee a \\
		&= (x \wedge \negmin a) \vee a \\
		&= x.
	\end{align*}
The fifth equality follows from the observation that, since $A$ is closed under $\wedge$, $x \wedge \negmin a \mleq c$. The last equality holds by Lemmas~\ref{lem:absorption} and \ref{lem:propab}(1).
	
(2) The desired result follows from Lemma~\ref{lem:technicalpropAB}(1) and $y \mleq b$, because $\varphi(\varphi^{-1}(y)) = \varphi(y \cdot c) = y \cdot b = y$.
\end{proof}

\begin{lemma}
The functions $\varphi$ and $\varphi^{-1}$ satisfy all prerequisites of the construction, i.e.,
\begin{enumerate}
	\item $\varphi$ and $\varphi^{-1}$ preserve $\vee$;
	\item $\varphi$ and $\varphi^{-1}$ preserve $\cdot$;
	\item $\varphi(\negmin c \vee a) = 0_b$.
\end{enumerate}
\end{lemma}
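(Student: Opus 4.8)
The plan is to reduce everything to properties of $\varphi^{-1}$, which has the simple closed form $\varphi^{-1}(y)=y\cdot c$, and then transfer the conclusions to $\varphi$ using that the two maps are mutually inverse bijections (Lemma~\ref{lem:phibijective}). First I would record that both the domain $\{x\in C\mid a\mleq x\mleq c\}$ and the codomain $\{y\in C\mid \negmin c^\ast\mleq y\mleq b\}$ of $\varphi$ are closed under $\cdot$ and $\vee$. Closure under $\cdot$ is immediate, since $\cdot$ is the meet for $\mleq$. Closure under $\vee$ follows from the fact that $\cdot$ distributes over $\vee$: on the $A$-side, $(x_1\vee x_2)\cdot c=(x_1\cdot c)\vee(x_2\cdot c)$ preserves the bound $\mleq c$, while $a\cdot(x_1\vee x_2)=(a\cdot x_1)\vee(a\cdot x_2)=a$ preserves $a\mleq{\cdot}$, and symmetrically $\negmin c^\ast\cdot(y_1\vee y_2)=\negmin c^\ast$ and $(y_1\vee y_2)\cdot b=y_1\vee y_2$ on the $B$-side. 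This closure is what lets me speak of $\varphi$ and $\varphi^{-1}$ as homomorphisms at all.

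For (2) I would verify that $\varphi^{-1}$ preserves $\cdot$ by the one-line computation $\varphi^{-1}(y_1)\cdot\varphi^{-1}(y_2)=(y_1\cdot c)\cdot(y_2\cdot c)=y_1\cdot y_2\cdot c=\varphi^{-1}(y_1\cdot y_2)$, using idempotence of $c$. For (1) I would similarly compute $\varphi^{-1}(y_1\vee y_2)=(y_1\vee y_2)\cdot c=(y_1\cdot c)\vee(y_2\cdot c)=\varphi^{-1}(y_1)\vee\varphi^{-1}(y_2)$, using only that $\cdot$ distributes over $\vee$. Since $\varphi^{-1}$ is then a bijective homomorphism for both operations and both intervals are closed, the standard fact that the inverse of a bijective homomorphism between structures closed under an operation is again a homomorphism yields that $\varphi$ preserves $\cdot$ and $\vee$ as well. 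The point worth flagging is that one should \emph{not} try to prove preservation of $\vee$ for $\varphi$ directly from its formula $\varphi(x)=(x\wedge\negmin a)\vee\negmin c^\ast$: that computation would require lattice distributivity, which fails in $\CIdInRL$ in general. Routing through $\varphi^{-1}$, whose verifications use only distributivity of $\cdot$ over $\vee$ (valid in every residuated lattice), is precisely what sidesteps this difficulty.

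For (3) I would apply Lemma~\ref{lem:technicalpropAB}(1). Since $\negmin c^\ast\mleq 0$ we have $0\in B$, so taking $y=0$ gives $\varphi(0\cdot c)=0\cdot b$. By Lemma~\ref{lem:propab}(2) we have $c\cdot 0=\negmin c\vee a$, so the left-hand side is exactly $\varphi(\negmin c\vee a)$. It then remains to identify $0\cdot b$ with $0_b$: because $0\leq b\leq 1$, both factors are negative, so $0\cdot b=0\wedge b=0$, and the same inequalities force $0\leq\negmin b$, whence $0_b=b\wedge\negmin b=0$ (recall $0_b\leq 0$ always). Thus $\varphi(\negmin c\vee a)=0=0_b$. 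Alternatively one can argue directly, using Lemma~\ref{lem:absorption} to rewrite $\varphi(\negmin c\vee a)=\negmin c\vee 0_a\vee\negmin c^\ast$ and then invoking the identity $\negmin c\vee\negmin c^\ast=0$ already established in the proof of Lemma~\ref{lem:propab}(2).

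The genuinely delicate point is the observation in (1) that one must pass through $\varphi^{-1}$ rather than compute with $\varphi$ directly, since $\CIdInRL$ is not lattice-distributive; once $\varphi^{-1}$ is seen to be a homomorphism, (1) and (2) are routine. Part (3) is short but hinges on recognizing that $0_b$ collapses to $0$, so that the hypotheses $0\leq b$ and $b\in B^-$ from Lemma~\ref{lem:propab}(2) are exactly what is needed.
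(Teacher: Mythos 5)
Your proposal is correct and follows essentially the same route as the paper: reduce everything to $\varphi^{-1}(y)=y\cdot c$, verify preservation of $\vee$ and $\cdot$ by the same one-line computations using distributivity of $\cdot$ over $\vee$ and idempotence, transfer to $\varphi$ via bijectivity, and obtain (3) from $0_b=0$ and $c\cdot 0=\negmin c\vee a$ (Lemma~\ref{lem:propab}(2)); your detour through Lemma~\ref{lem:technicalpropAB}(1) with $y=0$ is only a cosmetic variant of the paper's direct evaluation $\varphi^{-1}(0)=c\cdot 0$. Your added remarks on closure of the two intervals and on why one should not compute with $\varphi$ directly are sound but not needed beyond what the paper already established.
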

\begin{proof}
Firstly note that as we have already shown that $\varphi$ is a bijection with inverse $\varphi^{-1}$, it suffices to show that $\varphi^{-1}$ preserves $\vee$ and $\cdot$. The function $\varphi^{-1}$ preserves $\vee$ as
\[
\varphi^{-1}(y \vee z) = (y \vee z) \cdot c = (y \cdot c) \vee (z \cdot c) = \varphi^{-1}(y) \vee \varphi^{-1}(z).
\] 
The function $\varphi^{-1}$ preserves $\cdot$ as
\[
\varphi^{-1}(y \cdot z) = y \cdot z \cdot c = (y \cdot c) \cdot (z \cdot c) = \varphi^{-1}(y) \cdot \varphi^{-1}(z).
\]
For the last item, note that by Lemma~\ref{lem:propab}(2), $0\leq b \leq 1$
so $0_b = 0$. Therefore, $\varphi^{-1}(0_b) = \varphi^{-1}(0) = c \cdot 0 = \negmin c \vee a$ and hence $\varphi(\negmin c \vee a) = 0_b$.
\end{proof}

We have now shown that $\alg{A} \oplus_\varphi \alg{B}$ is well-defined. It remains to show that indeed $\alg{A} \oplus_\varphi \alg{B}$ and $\alg{C}$ coincide.

\begin{lemma}
$\alg{C} = \alg{A} \oplus_\varphi \alg{B}$.
\end{lemma}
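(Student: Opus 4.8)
The plan is to show that the two algebras $\alg{C}$ and $\alg{A} \oplus_\varphi \alg{B}$ have the same universe and the same fundamental operations. The universe is immediate: by the choice of $c$ and $c^\ast$, the sets $A = \{ x \in C \mid x \mleq c \}$ and $B = \{ y \in C \mid \negmin c^\ast \mleq y \}$ partition $C$ (using Lemma on part (1), $\negmin c^\ast \mleq x$ iff $x \not\mleq c$, so the two pieces are disjoint and cover $C$), and $A \cup B = C$ is exactly the carrier of $\alg{A} \oplus_\varphi \alg{B}$. Since the signature is term-equivalent to the InRL-semiring signature $\langle \vee, \cdot, 1, \negmin \rangle$ by Theorem~\ref{thm:termequiv}, it suffices to verify that these four operations agree; the lattice order, $\wedge$, $\to$ and $0$ are then automatically determined, so I would only check $\vee$, $\cdot$, $\negmin$ and the two constants.

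The constants and involution are the easy part. We have $1 = 1^B = 1$ and $0 = 0^B = 0$ by the definitions of $\alg{B}$ and of the gluing, and $\negmin$ on $A \cup B$ is the union of the involutions of $\alg{A}$ and $\alg{B}$, which are just the restrictions of the involution of $\alg{C}$ to the two subuniverses (here one uses that each piece is closed under $\negmin$, already established when $A$ and $B$ were shown to be subalgebras). For the fusion $\cdot$, the cases $x,y \in A$ and $x,y \in B$ hold because $\alg{A}$ and $\alg{B}$ are subalgebras of $\alg{C}$. The genuine content is the mixed case $x \in A$, $y \in B$, where I must verify
\[
x \cdot y = x \cdot \varphi^{-1}(y \cdot b),
\]
the fusion computed in $\alg{C}$ on the left and the gluing formula on the right. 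Using $\varphi^{-1}(y \cdot b) = (y \cdot b) \cdot c$ and the defining relations $a = c \cdot \negmin c^\ast$, $b = (c \wedge \negmin a) \vee \negmin c^\ast$, this reduces to showing $y \cdot c = y \cdot b \cdot c$, i.e. that multiplying $y \in B$ by $c$ absorbs the difference between $b$ and $1$; this is a short computation with the identities in Lemma~\ref{lem:propab} and the fact that $\negmin c^\ast \mleq y$.

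The main obstacle is matching the join $\vee$ in the mixed case, where the gluing definition splits on whether $x \leq^A \negmin a$. I would handle the two subcases separately. When $x \leq \negmin a$, the gluing prescribes $x \vee y = \varphi(x \vee a) \vee y$, and by Lemma~\ref{lem:technicalpropAB}(2) this is exactly the regime in which $x \vee y \in B$; so I must show that the join computed in $\alg{C}$ equals $\varphi(x \vee a) \vee y$, which follows by expanding $\varphi(x \vee a) = ((x \vee a) \wedge \negmin a) \vee \negmin c^\ast = (x \vee 0_a) \vee \negmin c^\ast$ via Lemma~\ref{lem:absorption} and simplifying against $y \in B$. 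When $x \not\leq \negmin a$, the gluing prescribes $x \vee y = x \vee \varphi^{-1}(y \cdot b)$, and Lemma~\ref{lem:technicalpropAB}(2) tells us $x \vee y \in A$; here I rewrite $\varphi^{-1}(y \cdot b) = y \cdot b \cdot c = y \cdot c$ (reusing the absorption just established) and check that the $\alg{C}$-join of $x$ with $y$ collapses onto $A$ through multiplication by $c$. I expect the careful bookkeeping in this last subcase — confirming that the $\alg{C}$-join really lands in $A$ and is computed correctly by the formula — to be the most delicate point, but every identity needed is supplied by Lemmas~\ref{lem:absorption}, \ref{lem:propab} and~\ref{lem:technicalpropAB}, so no new machinery is required.
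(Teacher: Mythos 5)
Your proposal is correct and follows essentially the same route as the paper's proof: same reduction to the InRL-semiring operations, the same mixed-case fusion identity $x \cdot y = x \cdot \varphi^{-1}(y \cdot b)$ via $\varphi^{-1}(y\cdot b) = y\cdot c$, and the same two join subcases handled by Lemmas~\ref{lem:absorption} and~\ref{lem:technicalpropAB}. The only cosmetic difference is that you obtain $y \cdot b \cdot c = y \cdot c$ by direct computation where the paper invokes Lemma~\ref{lem:technicalpropAB}(1) together with the bijectivity of $\varphi$, which amounts to the same identities.
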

\begin{proof}
It is obvious that the universes of the two algebras coincide. It easily follows that the involution $\negmin$ and the constants coincide as well. 

For the fusion operation, the interesting case is to show that $x \cdot y = x \cdot \varphi^{-1}(y \cdot b)$ for $x \in A$, $y \in B$. Note that by Lemma~\ref{lem:technicalpropAB}(1), $x \cdot \varphi^{-1}(y \cdot b) = x \cdot \varphi^{-1}(\varphi(y \cdot c)) = x \cdot y \cdot c = x \cdot y$ since $x\mleq c$.

For the join operation, the two interesting cases are when $x \in A$ and $y \in B$. Firstly suppose that $x \leq \negmin a$. Then
\begin{align*}
\varphi(x \vee a) \vee y &= ([(x \vee a) \wedge \negmin a] \vee \negmin c^\ast) \vee y \\
	&= (x \vee 0_a) \vee \negmin c^\ast \vee y \\
	&= x \vee \negmin c^\ast \vee y \\
	&= x \vee y.
\end{align*}
The second equality follows by Lemma~\ref{lem:absorption} and the third by Lemma~\ref{joinclosed}, since $a\mleq c^*$, hence $c^*\leq 1_a$ and therefore $0_a\leq -c^*$. The last equality follows from $\negmin c^\ast = \negmin c^\ast \cdot y \leq 1 \cdot y = y$.

For the other case, suppose that $x \not\leq \negmin a$. By Lemma~\ref{lem:technicalpropAB}(2), this is equivalent to $x \vee y \notin B$, hence $x \vee y \in A$. Then, by Lemma~\ref{lem:technicalpropAB}(1),
\begin{align*}
x \vee \varphi^{-1}(y \cdot b) &= x \vee \varphi^{-1}(\varphi(y \cdot c)) \\
	&= x \vee (y \cdot c) \\
	&= (x \cdot c) \vee (y \cdot c) \\
	&= (x \vee y) \cdot c \\
	&= x \vee y. \qedhere
\end{align*}
\end{proof}

We can now state the sought-after structural characterization result. For any finite member $\alg{A}$ of $\CIdInRL$, either $\alg{A}$ is a Boolean algebra or $\alg{A}$ can be decomposed into two strictly smaller members of $\CIdInRL$ by the decomposition method outlined in this section. Repeated application of this decomposition procedure proves the following theorem.

\begin{theorem}\label{thm:structchar}
Any finite member $\alg{A}$ of $\CIdInRL$ can be constructed using the gluing construction outlined in Section~\ref{sec:gluing} starting from all finite Boolean algebras.
\end{theorem}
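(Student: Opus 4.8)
The plan is to prove the statement by strong induction on the cardinality of the finite algebra, taking the decomposition developed throughout this section as the inductive step. For the base case I take the finite members that are already Boolean algebras; these require no work, as they are precisely the building blocks from which the theorem starts. For the induction hypothesis I assume that every member of $\CIdInRL$ of strictly smaller cardinality can be obtained from finite Boolean algebras by iterated gluing.

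For the inductive step, suppose the finite member $\alg{A}$ is not a Boolean algebra. First I would observe that this forces $A^+ \neq \{1\}$: if $A^+ = \{1\}$, then by the dual isomorphism of Theorem~\ref{distriblattice} the distributive lattice $\{0_x \mid x \in A\}$ reduces to $\{0\}$, so $A$ consists of the single Boolean block $B_0$ and $\alg{A}$ is itself a Boolean algebra. Hence $\langle A^+, \wedge, \vee, 1\rangle$ is a finite distributive lattice with more than one element and least element $1$, so it has an atom $c$; let $c^\ast$ be its complement as produced above. With $c$ and $c^\ast$ fixed, the elements $a = c \cdot \negmin c^\ast$, $b = (c \wedge \negmin a) \vee \negmin c^\ast$ and the map $\varphi$ of this section satisfy every prerequisite of the gluing construction, as verified in Lemmas~\ref{lem:propab}--\ref{lem:phibijective} and the preservation lemma following them. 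The final lemma of this section then yields $\alg{A} = \alg{A}_1 \oplus_\varphi \alg{A}_2$, where $\alg{A}_1$ and $\alg{A}_2$ are the subalgebra and pointed subalgebra constructed on the universes $\{x \in A \mid x \mleq c\}$ and $\{y \in A \mid \negmin c^\ast \mleq y\}$ respectively; both belong to $\CIdInRL$, since the variety is closed under (pointed) subalgebras.

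The step I expect to require the most care, and on which well-foundedness of the induction rests, is verifying that both $\alg{A}_1$ and $\alg{A}_2$ are \emph{strictly} smaller than $\alg{A}$. Here I would invoke the opening lemma of this section, which gives $\negmin c^\ast \mleq x$ if and only if $x \not\mleq c$: the universes of $\alg{A}_1$ and $\alg{A}_2$ therefore partition $A$, so $|A_1| + |A_2| = |A|$. Both pieces are nonempty, as $c \mleq c$ puts $c \in A_1$ and $\negmin c^\ast \mleq \negmin c^\ast$ puts $\negmin c^\ast \in A_2$; consequently $|A_1| < |A|$ and $|A_2| < |A|$. The induction hypothesis then expresses each of $\alg{A}_1$ and $\alg{A}_2$ as an iterated gluing of finite Boolean algebras, and a single further gluing $\alg{A}_1 \oplus_\varphi \alg{A}_2$ recovers $\alg{A}$, completing the induction.
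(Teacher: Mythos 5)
Your proposal is correct and follows essentially the same route as the paper, which proves Theorem~\ref{thm:structchar} by repeatedly applying the decomposition of Section~\ref{sec:reverse} to non-Boolean finite members. You merely make explicit two details the paper leaves implicit --- that a non-Boolean finite $\alg{A}$ has $A^+ \neq \{1\}$ and hence an atom $c$, and that the partition from the opening lemma of that section guarantees both pieces are strictly smaller, so the induction is well-founded --- which is a faithful elaboration rather than a different argument.
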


As mentioned at the start of this section, the reversal of the gluing construction as outlined cannot be applied to the algebra $\alg{B}^\ast$ from Figure~\ref{fig:locfinite}. However, as will be shown, $\alg{B}^\ast$ can be constructed using the gluing construction from Section~\ref{sec:gluing}, albeit by an infinite number of applications. Characterizing exactly which subclasses of $\CIdInRL$ can be constructed using the gluing construction from Section~\ref{sec:gluing} is left for future work. As the reverse decomposition only depends on the underlying distributive lattice $\alg{A}^+$ being finite, a slight generalization of the theorem above can nonetheless be obtained without further effort.

\begin{corollary}
Any member $\alg{A} \in \CIdInRL$ such that $\alg{A}^+$ is finite can be constructed using the gluing construction in Section~\ref{sec:gluing} starting from all Boolean algebras.
\end{corollary}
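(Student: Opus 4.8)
The plan is to mimic the proof of Theorem~\ref{thm:structchar} almost verbatim, isolating exactly where finiteness of $\alg{A}$ was used and observing that only finiteness of $\alg{A}^+$ is actually required. The decomposition method of this section takes an atom $c$ of the distributive lattice $\langle C^+, \wedge, \vee, 1 \rangle$, produces its complement $c^\ast$ via Stone's theorem on distributive lattices (cited as \cite[Theorem 6]{Sto38}), and then builds $\alg{A}$, $\alg{B}$, $a$, $b$, and $\varphi$ purely from $c$, $c^\ast$, and the ambient operations. None of the lemmas verifying the prerequisites of the gluing construction (\textit{well-definedness of} $\varphi,\varphi^{-1}$, \textit{bijectivity, preservation of} $\vee$ \textit{and} $\cdot$, and $\varphi(\negmin c \vee a)=0_b$) invoked finiteness of the whole algebra; they used only distributivity of $C^+$ and the general facts from Sections~\ref{sec:partition}--\ref{sec:filters}. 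Thus the single point to secure is the \emph{existence of an atom} $c \in C^+$ whenever $\alg{A}$ is not a Boolean algebra, together with a well-foundedness argument guaranteeing that the repeated decomposition terminates.

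First I would observe that if $\alg{A}^+$ is finite and $\alg{A}$ is not a Boolean algebra, then by Theorem~\ref{distriblattice} the distributive lattice $\langle A^+, \wedge, \vee, 1 \rangle$ has more than one element, and being finite it contains an atom $c$. This is the only place where finiteness of the whole algebra was genuinely needed in Theorem~\ref{thm:structchar}: every subsequent lemma in this section goes through unchanged, since $\alg{A}$ itself may now be infinite but all the constructions reference only $c$, $c^\ast$, and finitely many derived elements. Hence the same computations establish $\alg{A} = \alg{A} \oplus_\varphi \alg{B}$ with $\alg{A}$ and $\alg{B}$ members of $\CIdInRL$.

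Next I would check that the decomposition strictly decreases $|\alg{A}^+|$ so that the recursion is well-founded. By the constructions, $\alg{A}$ has positive cone dually isomorphic to $\{x \in C^+ \mid x \mleq c\}$ and $\alg{B}$ has positive cone corresponding to $\{x \in C^+ \mid c^\ast \mleq x\}$; since $c$ is an atom distinct from the top and from $1$, both of these sets are proper nonempty subsets of $C^+$, so each factor has a strictly smaller (still finite) positive cone. Therefore the induction on $|\alg{A}^+|$ terminates, with base case $|\alg{A}^+| = 1$ corresponding exactly to the Boolean algebras by Theorem~\ref{distriblattice}. Since the positive cones of the factors are finite, each factor again satisfies the hypothesis of the corollary, so the argument applies recursively.

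The main obstacle, or rather the thing requiring the most care, is confirming that no step of the decomposition silently assumed the ambient algebra to be finite rather than merely $\alg{A}^+$ to be finite. In particular I would re-examine Lemma~\ref{lem:propab} and the well-definedness lemma, where elements like $b = (c \wedge \negmin a) \vee \negmin c^\ast$ and the identity $0 = \negmin c \vee \negmin c^\ast$ are derived: these rely on $\negmin c$ being a \emph{coatom} of $\langle \{0_x \mid x \in C\}, \cdot, \vee, 0\rangle$, which is the dual of $C^+$ and hence also finite, so the coatom exists for the same reason the atom does. Once this is verified, the corollary follows immediately, and I would phrase the final writeup as a short induction on $|\alg{A}^+|$ invoking Theorem~\ref{thm:structchar}'s lemmas for the inductive step.
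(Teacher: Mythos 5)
Your proposal is correct and takes essentially the same approach as the paper: the paper justifies this corollary with precisely the observation you make, namely that the decomposition behind Theorem~\ref{thm:structchar} uses finiteness only through the finite distributive lattice $\alg{A}^+$ (existence of the atom $c$, its complementary element $c^\ast$, and the well-foundedness of the recursion), while all the verification lemmas go through for an ambient algebra that may be infinite. Your explicit induction on $|\alg{A}^+|$, with Boolean algebras as the base case $|\alg{A}^+|=1$, just fills in the details the paper leaves implicit.
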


Another noteworthy observation is that for the algebras in the preceding corollary the multiplicative order uniquely determines the lattice order and vice versa, hence it suffices to present the simpler multiplicative order. A special case of the gluing construction is the \emph{multiplicative ordinal sum} $\alg{A}\oplus_{\varphi_0}\alg{B}$, where $\varphi_0$ is the unique map from $\{1^A\}$ to $\{0^B\}$. This map is a valid gluing if and only if $0^A\ne 1^A$ in $\alg{A}$ (since the element $a=1^A$ in a gluing must satisfy $a\nleq 0^A$) and $\alg{B}$ is Boolean. For brevity we denote $\oplus_{\varphi_0}$ simply by $\oplus$.

A subdirectly irreducible $\alg{C}\in\CIdInRL$ satisfies $0^A=1^A$ (called \emph{odd} in the context of Sugihara monoids) if and only if $\alg{C}\cong\alg{A}\oplus\alg{1}$ for some $\alg{A}\in\CIdInRL$, where $\alg{1}$ is the one-element algebra. All finite Sugihara monoids can be obtained from the one- and two-element Boolean algebra $\alg{2}$ by the use of (iterated) ordinal sums and direct products (but the variety of Sugihara monoids is not closed under ordinal sums of nonlinear members such as $\alg{2}^2\oplus\alg{1}$). In Figure~\ref{examples} we list multiplicative orders of some small members of $\CIdInRL$, as well as some semilattices that are not the multiplicative order of any $\CIdInRL$.

\begin{figure}
\begin{center}
\begin{tikzpicture}[scale=0.5]
\draw[very thick](0,0)--(-1,1)--(0,2)--(1,1)--(0,0);
\draw( 0,0)node{}--(-1,1)node{}--(0,2)node{}--(0,3)node{}(0,2)--(1,1)node{}--(0,0);
\node at(0,-1)[n]{$\alg{2}^2{\oplus}\alg{1}$};
\end{tikzpicture}
\ 
\begin{tikzpicture}[scale=0.5]
\draw[very thick](0,-1)--(0,-2)(-1,1)--(0,2)--(1,1)--(0,0)--(-1,1);
\draw( 0,0)node{}--(-1,1)node{}--(0,2)node{}--(1,1)node{}--(0,0)--(0,-1)node{}(0,-2)node{};
\node at(0,-3)[n]{$\alg{2}{\oplus}\alg{2}^2$};
\end{tikzpicture}
\quad
\begin{tikzpicture}[scale=0.5]
\draw[very thick](0,0)--(-1,1)--(0,2)--(1,1)--(0,0)
(0,3)--(-1,4)--(0,5)--(1,4)--(0,3);
\draw(0,2)--(1,1)node{}--(0,0)node{}--(-1,1)node{}--(0,2)node{}--(0,3)node{}--(1,4)node{}--(0,5)node{}--(-1,4)node{}--(0,3);
\node at(0,-1)[n]{$\alg{2}^2{\oplus}\alg{2}^2$};
\end{tikzpicture}
\quad
\begin{tikzpicture}[scale=0.5]
\draw[very thick](0,0)--(-1,1)--(0,2)--(1,1)--(0,0)(-2,2)--(-1,3)(2,2)--(1,3)
(0,4)--(0,5);
\draw(0,2)--(1,1)node{}--(0,0)node{}--(-1,1)node{}--(0,2)node{}--(1,3)node{}--(0,4)node{}--(-1,3)node{}--(0,2)(-1,1)--(-2,2)node{}--(-1,3)(1,1)--(2,2)node{}--(1,3)(0,4)--(0,5)node{};
\node at(0,-1)[n]{};
\end{tikzpicture}
\quad
\begin{tikzpicture}[scale=0.5]
\draw[very thick](0,0)--(-1,1)--(0,2)--(1,1)--(0,0)(-2,2)--(-1,3)(2,2)--(1,3)
(0,-1)--(-1,0)--(-1,1)(0,-1)--(1,0)--(1,1)(0,-1)--(0,0)(-1,0)--(0,1)--(1,0)(0,1)--(0,2)(2,2)--(2,1)--(1,2)--(1,3);
\draw(0,2)--(1,1)node{}--(0,0)node{}--(-1,1)node{}--(0,2)node{}--(1,3)node[label=right:$1_y$]{}--(0,4)node[label=right:{$\,0{=}1$}]{}--(-1,3)node[label=left:$1_x\,$]{}--(0,2)(-1,1)--(-2,2)node[label=left:$0_x$]{}--(-1,3)(1,1)--(2,2)node{}--(1,3)(0,-1)node{}(-1,0)node{}(1,0)node{}(2,1)node[label=right:$0_y$]{}--(1,0)(1,2)node{}--(0,1)(0,1)node{};
\node at(0,-2)[n]{$(\alg{2}^2{\oplus}\alg{1}){\times}(\alg{2}\oplus\alg{1})$};
\end{tikzpicture}
\end{center}
\begin{center}
\begin{tikzpicture}[scale=0.5]
\draw[very thick](0,0)--(0,-1)(-1,1)--(0,2)--(1,1);
\draw( 0,0)node{}--(-1,1)node{}--(0,2)node{}--(1,1)node{}--(0,0)(0,-1)node{};
\end{tikzpicture}
\quad
\begin{tikzpicture}[scale=0.5]
\draw[very thick](0,0)--(-1,1)--(0,2)--(1,1)--(0,0)(-1,3)--(0,4)--(1,3);
\draw(0,2)--(1,1)node{}--(0,0)node{}--(-1,1)node{}--(0,2)node{}--(1,3)node{}--(0,4)node{}--(-1,3)node{}--(0,2);
\end{tikzpicture}
\quad
\begin{tikzpicture}[scale=0.5]
\draw[very thick](0,0)--(-1,1)--(0,2)--(1,1)--(0,0)(-2,2)--(-1,3)(0,4)--(1,3);
\draw(0,2)--(1,1)node{}--(0,0)node{}--(-1,1)node{}--(0,2)node{}--(1,3)node{}--(0,4)node{}--(-1,3)node{}--(0,2)(-1,1)--(-2,2)node{}--(-1,3);
\end{tikzpicture}
\quad
\begin{tikzpicture}[scale=0.5]
\draw[very thick](0,0)--(-1,1)--(0,2)--(1,1)--(0,0)(-2,2)--(-1,3)(2,2)--(1,3)
(0,-1)--(-1,0)--(-1,1)(0,-1)--(1,0)--(1,1)(0,-1)--(0,0)(-1,0)--(0,1)--(1,0)(0,1)--(0,2);
\draw(0,2)--(1,1)node{}--(0,0)node{}--(-1,1)node{}--(0,2)node{}--(1,3)node[label=right:$1_y$]{}--(0,4)node[label=right:{$\,0{=}1$}]{}--(-1,3)node[label=left:$1_x\,$]{}--(0,2)(-1,1)--(-2,2)node[label=left:$0_x$]{}--(-1,3)(1,1)--(2,2)node[label=right:$0_y$]{}--(1,3)(0,-1)node{}(-1,0)node{}(1,0)node{}(0,1)node{};
\end{tikzpicture}
\end{center}
\caption{Multiplicative semilattices that can (top) and cannot (bottom) support a $\CIdInRL$}\label{examples}
\end{figure}
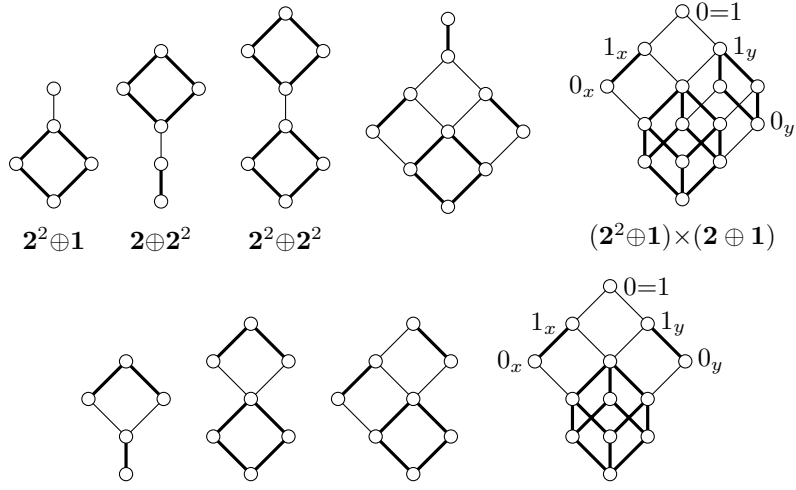

\section{Applications}\label{sec:app}

In this section we discuss two applications of the structural characterization result from Theorem~\ref{thm:structchar}.

\subsection{Distributivity}

Recall from Example~\ref{ex:sugihara} (Fig.\!~\ref{fig:A1}) that for members $\alg{A}$ of $\CIdInRL$, the lattice order does not satisfy the distributive law in general. In this section we apply the structural characterization result from Theorem~\ref{thm:structchar} to show that for each finite $\alg{A} \in \CIdInRL$, the monoidal semilattice $\langle A, \mleq \rangle$ is distributive. Note that for such a finite $\alg{A}$, $\langle A, \mleq \rangle$ is a lattice. But since we have no elegant definition of the join, we work with the notion of distributivity of a semilattice. We say that the semilattice $\langle A, \mleq \rangle$ is \emph{distributive} if for all $x,y,z \in A$,
\[
x \cdot y \mleq z \ \Longrightarrow \ \text{there exists } x', y' \in A \text{ such that } x \mleq x', y \mleq y', \text{ and } z = x' \cdot y'.
\]
Note that a lattice is distributive in the usual sense exactly when it is distributive as a semilattice in this sense (see e.g.~\cite{CHK07}).

\begin{lemma}
Let $\alg{A}, \alg{B} \in \CIdInRL$ with elements $a \in A^-$, $b \in B^-$ and function $\varphi$ such that their gluing $\alg{A} \oplus_\varphi \alg{B}$ is well-defined. If $\langle A, \mleq^A \rangle$ and $\langle B, \mleq^B \rangle$ are distributive, then so is the monoidal semilattice $\langle A \cup B, \mleq \rangle$ of $\alg{A} \oplus_\varphi \alg{B}$.
\end{lemma}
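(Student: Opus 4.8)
The plan is to verify the semilattice distributivity condition for $\langle A\cup B,\mleq\rangle$ directly, by a case analysis on the locations of the three elements, reducing each mixed case to distributivity of $\langle A,\mleq^A\rangle$ and of $\langle B,\mleq^B\rangle$ through the transfer map $\varphi$. First I would record the shape of the monoidal order and the product: for $x,y\in A$ one has $x\cdot y=x\cdot^A y$ and $x\mleq y\iff x\mleq^A y$, similarly inside $B$, and for $x\in A,\ y\in B$ one has $x\cdot y=x\cdot^A\pi(y)$ and $x\mleq y\iff x\mleq^A\pi(y)$, where I abbreviate $\pi(y):=\varphi^{-1}(y\cdot^B b)$. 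Crucially, no element of $B$ is $\mleq$ an element of $A$: if $y\in B$, $x\in A$ and $y\mleq x$ then $y=x\cdot y\in A$, contradicting $A\cap B=\emptyset$. Hence $x\cdot y\in B$ exactly when $x,y\in B$, and $x\cdot y\in A$ otherwise. I would then collect the properties of $\pi$: it maps $B$ onto $U:=\{w\in A\mid a\mleq^A w\}$, it is a $\cdot$-homomorphism (so $\mleq$-monotone), $\pi(\varphi(w))=w$ for $w\in U$, and $\varphi(\pi(y))=y\cdot^B b$; moreover $\varphi$ restricts to an isomorphism of monoidal semilattices between $U$ and $D:=\{v\in B\mid v\mleq^B b\}$, and every $v\in D$ satisfies $v=\varphi(\pi(v))$.

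The technical heart is a lifting lemma: \emph{if $y\in B$ and $w\in U$ satisfy $\pi(y)\mleq^A w$, then there is $y'\in B$ with $y\mleq^B y'$ and $\pi(y')=w$.} I would deduce it from distributivity of $B$. Since $\varphi$ is order preserving, $y\cdot^B b=\varphi(\pi(y))\mleq^B\varphi(w)\mleq^B b$; applying distributivity of $B$ to $y\cdot^B b\mleq^B\varphi(w)$ yields $p,q\in B$ with $y\mleq^B p$, $b\mleq^B q$ and $\varphi(w)=p\cdot^B q$; then $\varphi(w)=\varphi(w)\cdot^B b=p\cdot^B q\cdot^B b=p\cdot^B b$ (using $q\cdot^B b=b$), so $\pi(p)=w$ and $y':=p$ works. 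This lemma is exactly what raises a $B$-factor to one with a prescribed projection, and it is the step I expect to be the main obstacle, precisely because the monoidal order $\mleq^B$ and the lattice join $\vee^B$ need not agree, so the naive guess $y':=y\vee^B\varphi(w)$ fails.

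With these tools the case analysis would become routine. The pure cases $x,y,z\in A$ and $x,y,z\in B$ follow at once from distributivity of $A$ and of $B$, respectively, and the case $x,y\in B$, $z\in A$ is vacuous since $x\cdot y\in B$ cannot be $\mleq z\in A$. In each of the three remaining mixed cases I would begin by applying distributivity of $A$ to the $A$-valued inequality obtained after projecting, keeping in mind that a factorization $z=x'\cdot y'$ with $z\in B$ forces $x',y'\in B$, while with $z\in A$ and the $B$-element present it forces one witness in $A$ and one in $B$.

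Concretely, if $x,y\in A$ and $z\in B$, then $x\cdot^A y\mleq^A\pi(z)$ gives $u,w\in U$ (they lie in $U$ because $\pi(z)\in U$ and $U$ is a $\mleq^A$-upset) with $x\mleq^A u$, $y\mleq^A w$, $\pi(z)=u\cdot^A w$; then $\varphi(u)\cdot^B\varphi(w)=\varphi(\pi(z))=z\cdot^B b\mleq^B z$, and distributivity of $B$ lifts this to $z=p\cdot^B q$ with $\varphi(u)\mleq^B p$, $\varphi(w)\mleq^B q$, so $x':=p$ and $y':=q$ work, using monotonicity of $\pi$ to get $x\mleq^A u\mleq^A\pi(p)$ and symmetrically for $y$. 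The case $x\in A$, $y\in B$, $z\in B$ is the same, except that I first use the lifting lemma to replace $y$ by $\hat y\mgeq^B y$ with $\pi(\hat y)=w$, after which $\varphi(u)\cdot^B\hat y=z\cdot^B b$ and distributivity of $B$ again produces $p,q$. Finally, the case $x\in A$, $y\in B$, $z\in A$ is cleanest: distributivity of $A$ on $x\cdot^A\pi(y)\mleq^A z$ gives $u\in A$ and $w\in U$ with $x\mleq^A u$, $\pi(y)\mleq^A w$, $z=u\cdot^A w$, and the lifting lemma supplies $y'\in B$ with $y\mleq^B y'$ and $\pi(y')=w$, whence $x':=u$ and $y'$ satisfy $x'\cdot y'=u\cdot^A w=z$. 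This would exhaust all cases and establish distributivity of $\langle A\cup B,\mleq\rangle$.
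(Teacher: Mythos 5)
Your proposal is correct and follows essentially the same route as the paper's: project $B$-elements into $A$ via $y\mapsto\varphi^{-1}(y\cdot^B b)$, apply distributivity of $\langle A,\mleq^A\rangle$ first, then lift the resulting witnesses back into $B$ by applying distributivity of $\langle B,\mleq^B\rangle$ to an inequality of the form $y\cdot^B b\mleq^B\varphi(y')$ — your lifting lemma is exactly the paper's inline computation producing $b',y''$ with $b\mleq b'$, $y\mleq y''$ and $\varphi(y')=b'\cdot y''=b\cdot y''$. The differences are only organizational: you abstract that step into a named lemma and work through every case explicitly, where the paper treats the $x\in A$, $y\in B$ cases and declares the rest easier or similar.
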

\begin{proof}
	Suppose that $\langle A, \mleq^A \rangle$ and $\langle B, \mleq^B \rangle$ are distributive. To show that also $\langle A \cup B, \mleq \rangle$ is distributive, we consider any $x,y,z \in A \cup B$ such that $x \cdot y \mleq z$. We consider a number of cases. If $x \cdot y \in B$, then $x,y,z \in B$ and the required property follows since $\langle B, \mleq^B \rangle$ is distributive. So suppose that $x \cdot y \in A$. We consider the cases when $x \in A$, $y \in B$, in which case $x \cdot y \mleq z$ means that $x \cdot \varphi^{-1}(y \cdot b) \mleq z$. The other cases are easier or similar. 
	
	Suppose that $z \in A$. Then $x \cdot y \mleq z$ means that $x \cdot \varphi^{-1}(y \cdot b) \mleq z$. Since $\langle A, \mleq \rangle$ is distributive, we get $x', y' \in A$ such that $x \mleq x'$, $\varphi^{-1}(y \cdot b) \mleq y'$ and $x' \cdot y' = z$. Note that $a \mleq \varphi^{-1}(y \cdot b) \mleq y'$, so we apply the fact that $\langle B, \mleq \rangle$ is distributive to $y \cdot b \mleq \varphi(y')$ to get $b', y'' \in B$ such that $b \mleq b'$, $y \mleq y''$, and $\varphi(y') = b' \cdot y ''$. By definition of the range of $\varphi$, $\varphi(y') \mleq b$, hence it follows that $\varphi(y') = b \cdot \varphi(y') = b \cdot b' \cdot y'' = b \cdot y''$. We then have $x \mleq x'$, $y \mleq y''$ and
	\[
	x' \cdot y'' = x' \cdot \varphi^{-1}(y'' \cdot b) = x' \cdot \varphi^{-1}(\varphi(y')) = x' \cdot y' = z.
	\]
	Now suppose that $z \in B$. Then $x \cdot y \mleq z$ means that $x \cdot \varphi^{-1}(y \cdot b) \mleq \varphi^{-1}(z \cdot b)$. Using distributivity of $\langle A, \mleq \rangle$ we obtain $x', y' \in A$ such that $x \mleq x'$, $\varphi^{-1}(y \cdot b) \mleq y'$, and $x' \cdot y' = \varphi^{-1}(z \cdot b)$. Note that $a \mleq \varphi^{-1}(z \cdot b) = x' \cdot y' \mleq x'$, so $\varphi(x')$ is well-defined. Then $y \cdot b \mleq \varphi(y')$ and so $\varphi(x') \cdot y = \varphi(x') \cdot b \cdot y \mleq \varphi(x') \cdot \varphi(y')$. Moreover,
	\[
	\varphi(x') \cdot \varphi(y') = \varphi(x' \cdot y') = \varphi(\varphi^{-1}(z \cdot b)) = z \cdot b \mleq b
	\]
	and hence $\varphi(x') \cdot y \mleq z$. Distributivity of $\langle B, \mleq \rangle$ then gives $x'', y'' \in B$ such that $x \mleq x' \mleq \varphi(x') \mleq x''$, $y \mleq y''$, and $z = x'' \cdot y''$.
\end{proof}

The next result now follows from Theorem~\ref{thm:structchar} and the fact that any Boolean algebra is distributive. We conjecture that this result holds for any member of $\CIdInRL$, not only the finite ones.

\begin{theorem}
For any finite $\alg{A} \in \CIdInRL$, $\langle A, \mleq \rangle$ is a distributive semilattice.
\end{theorem}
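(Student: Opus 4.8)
The plan is to proceed by induction along the structural decomposition of Theorem~\ref{thm:structchar}, combined with the lemma immediately preceding the statement. The two results have already isolated all the real content: the lemma provides the inductive step, and finite Boolean algebras provide the leaves of the construction. First I would dispose of the base case. A finite Boolean algebra $\alg{B}$, viewed as a member of $\CIdInRL$, has fusion equal to meet, so the monoidal order $\mleq$ coincides with the underlying Boolean lattice order. A Boolean lattice is distributive, and, as recalled just before the statement, a lattice is distributive in the usual sense exactly when it is distributive as a (meet-)semilattice in the sense used here; hence $\langle B, \mleq \rangle$ is a distributive semilattice.

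For the inductive step I would argue by induction on the cardinality of $A$. If $\alg{A}$ is a Boolean algebra, the base case applies. Otherwise, by the decomposition developed in Section~\ref{sec:reverse}, there exist strictly smaller members $\alg{A}', \alg{B}' \in \CIdInRL$, together with data $a, b, \varphi$ satisfying all the prerequisites of the gluing construction, such that $\alg{A} = \alg{A}' \oplus_\varphi \alg{B}'$; in particular this gluing is well-defined. By the induction hypothesis the monoidal semilattices $\langle A', \mleq^{A'} \rangle$ and $\langle B', \mleq^{B'} \rangle$ are distributive, so the preceding lemma applies and yields that the monoidal semilattice $\langle A, \mleq \rangle$ of $\alg{A}' \oplus_\varphi \alg{B}' = \alg{A}$ is distributive. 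This completes the induction and hence the proof.

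I do not expect a genuine obstacle here, since the combinatorial work lives in Theorem~\ref{thm:structchar} and in the lemma. The only points that require a moment of care are, first, the base case, where one must observe that on each Boolean block $\mleq$ really is the Boolean order so that lattice distributivity transfers to the semilattice notion, and second, the well-foundedness of the recursion, which is guaranteed because each decomposition step in Section~\ref{sec:reverse} produces algebras of strictly smaller cardinality. Equivalently, one could phrase the argument directly along the finite tree of gluing operations furnished by Theorem~\ref{thm:structchar}, placing finite Boolean algebras at the leaves and invoking the lemma at every internal node.
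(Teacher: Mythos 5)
Your proposal is correct and takes essentially the same route as the paper, which derives the theorem in one line from Theorem~\ref{thm:structchar}, the gluing lemma immediately preceding the statement, and the distributivity of Boolean algebras --- precisely the induction you spell out. The two details you flag (that on a Boolean algebra in $\CIdInRL$ fusion is meet, so $\mleq$ is the Boolean order, and that each decomposition step in Section~\ref{sec:reverse} strictly decreases cardinality, guaranteeing well-founded recursion) are exactly the points the paper leaves implicit.
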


\subsection{Locally Finiteness}
For another application of Theorem~\ref{thm:structchar}, in this section we construct a sequence of $1$-generated members of $\CIdInRL$ with increasing cardinality, showing that the variety $\CIdInRL$ is not locally finite. This is in contrast with two well-known subvarieties of $\CIdInRL$, namely Boolean algebras and Sugihara monoids
that are both locally finite varieties (see \cite[Theorem 1]{R07} for the latter case).

For every $i \in \mathbb{N}$, we define $\alg{B}_i$ to be the four-element Boolean algebra with universe $\{ 0_i, x_i, \negmin x_i, 1_i \}$. Given two such (disjoint) algebras $\alg{B}_i$ and $\alg{B}_{i+1}$, we define two types of gluing: one that glues $\alg{B}_{i+1}$ ``on the left of $\alg{B}_i$'' if $i$ is even, and one that glues $\alg{B}_{i+1}$ ``on the right of $\alg{B}_i$'' if $i$ is odd. That is, 
\begin{align*}
\text{ for even $i$, }\varphi_i & \text{ is defined by } 1_i \mapsto \negmin x_{i+1} \text{ and } x_i \mapsto 0_{i+1}; \\
\text{ for odd $i$, }\varphi_i & \text{ is defined by } 1_i \mapsto x_{i+1} \text{ and } \negmin x_i\mapsto 0_{i+1}.
\end{align*}
Moreover, let $\alg{2}$ denote the 2-element Boolean algebra, with universe $\{ 0,1 \}$, and let $\alg{B}_i \oplus_{\psi_i} \alg{2}$ be the gluing given by $\psi_i \colon \alg{B}_i \to \alg{2}$ where $\psi_i(1_i) = 0$. For example, we can now express $\alg{A}_1$ from Figure~\ref{fig:A1} as $(\alg{B}_0 \oplus_{\varphi_0} \alg{B}_1) \oplus_\psi \alg{2}$, where $a = x_0$ and $b = x_1$.
In general, define the algebras, depicted in Figure~\ref{fig:locfinite}:
\[
\alg{A}_n:= ((\ldots ((\alg{B}_0 \oplus_{\varphi_0} \alg{B}_1) \oplus_{\varphi_1} \alg{B}_2 ) \oplus_{\varphi_2} 
\ldots  \oplus_{\varphi_{n-2}} \alg{B}_{n-1} ) \oplus_{\varphi_{n-1}} \alg{B}_{n} ) 
\oplus_{\psi_n} \mathbf{2}.
\]

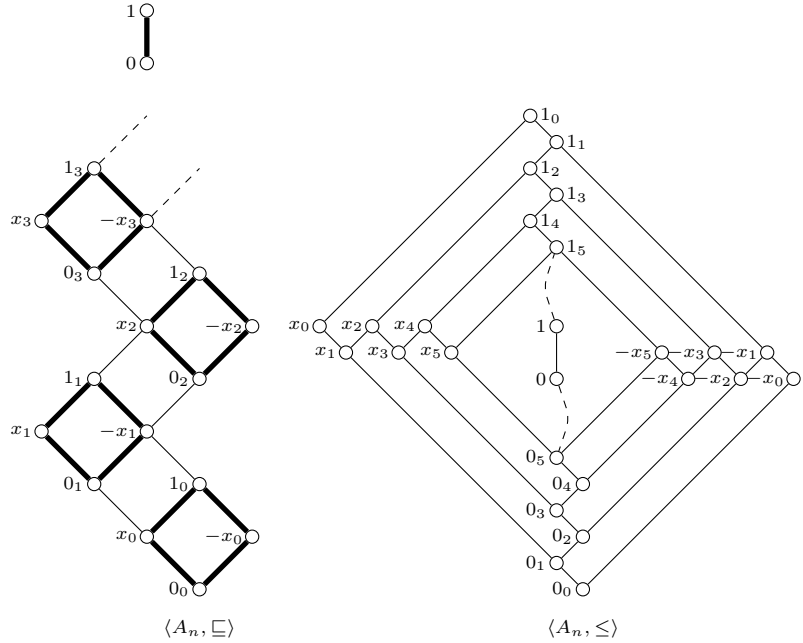
\begin{figure}[ht]
\begin{center}
	\scriptsize

\begin{tikzpicture}[baseline=0pt,scale=0.7]
	\node(1) at (-0.5,6.5)[label=left:$1$]{};
	\node(0) at (-0.5,5.5)[label=left:$0$]{};
	\draw(0)  edge[boolean] (1);
	
	\node(1x3) at (-1.5,3.5)[label=left:$1_3$]{};
	\node(x3) at (-2.5,2.5)[label=left:$x_3$]{};
	\node(nx3) at (-0.5,2.5)[label=left:$\negmin x_3$]{};
	\node(0x3) at (-1.5,1.5)[label=left:$0_3$]{};
	\draw(1x3) edge[boolean] (nx3);
	\draw(1x3) edge[boolean] (x3);
	\draw(0x3) edge[boolean] (x3);
	\draw(nx3) edge[boolean] (0x3);
	\draw[dashed](1x3) -- (-0.5,4.5);
	\draw[dashed](nx3) -- (0.5,3.5);
	
	\node(1x2) at (0.5,1.5)[label=left:$1_2$]{};
	\node(x2) at (-0.5,0.5)[label=left:$x_2$]{};
	\node(nx2) at (1.5,0.5)[label=left:$\negmin x_2$]{};
	\node(0x2) at (0.5,-0.5)[label=left:$0_2$]{};
	\draw(1x2) edge[boolean] (nx2);
	\draw(1x2) edge[boolean] (x2);
	\draw(0x2) edge[boolean] (x2);
	\draw(nx2) edge[boolean] (0x2);
	\draw(1x2) edge (nx3);
	\draw(x2) edge (0x3);
	
	\node(1x1) at (-1.5,-0.5)[label=left:$1_1$]{};
	\node(x1) at (-2.5,-1.5)[label=left:$x_1$]{};
	\node(nx1) at (-0.5,-1.5)[label=left:$\negmin x_1$]{};
	\node(0x1) at (-1.5,-2.5)[label=left:$0_1$]{};
	\draw(1x1) edge[boolean] (nx1);
	\draw(1x1) edge[boolean] (x1);
	\draw(0x1) edge[boolean] (x1);
	\draw(nx1) edge[boolean] (0x1);
	\draw(1x1) edge (x2);
	\draw(nx1) edge (0x2);
	
	\node(1x0) at (0.5,-2.5)[label=left:$1_0$]{};
	\node(x0) at (-0.5,-3.5)[label=left:$x_0$]{};
	\node(nx0) at (1.5,-3.5)[label=left:$\negmin x_0$]{};
	\node(0x0) at (0.5,-4.5)[label=left:$0_0$]{};
	\draw(1x0) edge[boolean] (nx0);
	\draw(1x0) edge[boolean] (x0);
	\draw(0x0) edge[boolean] (x0);
	\draw(nx0) edge[boolean] (0x0);
	\draw(1x0) edge (nx1);
	\draw(x0) edge (0x1);
	
	\node at (0.5,-5.25)[n]{$\langle A_n, \mleq \rangle$};
\end{tikzpicture} \quad
\begin{tikzpicture}[baseline=0pt,scale=0.7]
\node(1) at (0,0.5)[label=left:$1$]{};
\node(0) at (0,-0.5)[label=left:$0$]{};
\draw(0)  edge (1);

\node(1x5) at (0,2)[label=right:$1_5$]{};
\node(0x5) at (0,-2)[label=left:$0_5$]{};
\node(nx5) at (2,0)[label=left:$\negmin x_5$]{};
\node(x5) at (-2,0)[label=left:$x_5$]{};
\draw(1x5)  edge (x5);
\draw(0x5)  edge (nx5);
\draw(x5)  edge (0x5);
\draw(1x5)  edge (nx5);

\draw[dashed, rounded corners](1) -- (-0.25,1.25) -- (1x5);
\draw[dashed,rounded corners](0)  -- (0.25,-1.25) --(0x5);

\node(1x4) at (-0.5,2.5)[label=right:$1_4$]{};
\node(0x4) at (0.5,-2.5)[label=left:$0_4$]{};
\node(x4) at (-2.5,0.5)[label=left:$x_4$]{};
\node(nx4) at (2.5,-0.5)[label=left:$\negmin x_4$]{};
\draw(1x4) edge (1x5);
\draw(x4) edge (x5);
\draw(1x4) edge (x4);
\draw(0x4) edge (0x5);
\draw(nx4) edge (nx5);
\draw(nx4) edge (0x4);

\node(1x3) at (0,3)[label=right:$1_3$]{};
\node(0x3) at (0,-3)[label=left:$0_3$]{};
\node(x3) at (-3,0)[label=left:$x_3$]{};
\node(nx3) at (3,0)[label=left:$\negmin x_3$]{};
\draw(1x3) edge (1x4);
\draw(x3) edge (0x3);
\draw(0x3) edge (0x4);
\draw(nx3) edge (nx4);
\draw(nx3) edge (1x3);
\draw(x3) edge (x4);

\node(1x2) at (-0.5,3.5)[label=right:$1_2$]{};
\node(0x2) at (0.5,-3.5)[label=left:$0_2$]{};
\node(x2) at (-3.5,0.5)[label=left:$x_2$]{};
\node(nx2) at (3.5,-0.5)[label=left:$\negmin x_2$]{};
\draw(1x2) edge (1x3);
\draw(x2) edge (x3);
\draw(1x2) edge (x2);
\draw(0x2) edge (0x3);
\draw(nx2) edge (nx3);
\draw(nx2) edge (0x2);

\node(1x1) at (0,4)[label=right:$1_1$]{};
\node(0x1) at (0,-4)[label=left:$0_1$]{};
\node(x1) at (-4,0)[label=left:$x_1$]{};
\node(nx1) at (4,0)[label=left:$\negmin x_1$]{};
\draw(1x1) edge (1x2);
\draw(x1) edge (0x1);
\draw(0x1) edge (0x2);
\draw(nx1) edge (nx2);
\draw(nx1) edge (1x1);
\draw(x1) edge (x2);

\node(1x0) at (-0.5,4.5)[label=right:$1_0$]{};
\node(0x0) at (0.5,-4.5)[label=left:$0_0$]{};
\node(x0) at (-4.5,0.5)[label=left:$x_0$]{};
\node(nx0) at (4.5,-0.5)[label=left:$\negmin x_0$]{};
\draw(1x0) edge (1x1);
\draw(x0) edge (x1);
\draw(1x0) edge (x0);
\draw(0x0) edge (0x1);
\draw(nx0) edge (nx1);
\draw(nx0) edge (0x0);

\node at (0.5,-5.25)[n]{$\langle A_n, \leq \rangle$};

\end{tikzpicture}
\end{center}
\caption{Infinite sequence of 1-generated algebras $\alg{A}_n$}\label{fig:locfinite}
\end{figure}

\noindent Note that by direct computation, we obtain
\begin{align*}
1_j &= -x_{j-1}\vee 1 & 0_j &= x_{j-1}\wedge 1 \\
x_{j} &= x_{j-1}\wedge 1_j &  -x_j &= -x_{j-1}\vee 0_j
\end{align*}
for any $j\geq 1$, and so $\alg{A}_n$ is generated by the single element $x_0$. Moreover, it follows by iterated application of Theorem~\ref{thm:gluing} that $\alg{A}_n$ indeed belongs to $\CIdInRL$.

\begin{proposition}
The variety $\CIdInRL$ is not locally finite.
\end{proposition}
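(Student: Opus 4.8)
The plan is to show that the free algebra on one generator in $\CIdInRL$ is infinite, which by definition precludes local finiteness. All the genuinely structural work is already packaged into the family $\{\alg{A}_n\}_{n\in\mathbb{N}}$: each $\alg{A}_n$ belongs to $\CIdInRL$ (by iterated use of Theorem~\ref{thm:gluing}), each is generated by the single element $x_0$, and their cardinalities are unbounded. Given these three facts the proposition reduces to a short deduction about free algebras.

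First I would record that, since the gluing $\alg{A}\oplus_\varphi\alg{B}$ has universe the disjoint union $A\cup B$, the iterated gluing $\alg{A}_n=\alg{B}_0\oplus_{\varphi_0}\cdots\oplus_{\varphi_{n-1}}\alg{B}_n\oplus_{\psi_n}\mathbf{2}$ has $|A_n|=4(n+1)+2$ elements, so $|A_n|\to\infty$ (as a sanity check, $\alg{A}_1$ has $10$ elements, matching Figure~\ref{fig:A1}). Next I would confirm that $x_0$ generates $\alg{A}_n$: the constants $0,1$ and the involution $\negmin$ are available in the signature, so the subalgebra generated by $x_0$ already contains $x_0$, $\negmin x_0$, $0_0=x_0\wedge\negmin x_0$, $1_0=x_0\vee\negmin x_0$, and the global $0,1$. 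The displayed recurrences $1_j=\negmin x_{j-1}\vee 1$, $0_j=x_{j-1}\wedge 1$, $x_j=x_{j-1}\wedge 1_j$, and $\negmin x_j=\negmin x_{j-1}\vee 0_j$ then produce, by induction on $j$, all remaining elements $1_j,0_j,x_j,\negmin x_j$ for $1\le j\le n$, exhausting $A_n$.

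To conclude, each $\alg{A}_n$, being $1$-generated, is a homomorphic image of the free algebra $\mathbf{F}(1)$ of $\CIdInRL$ on one generator (send the free generator to $x_0$); hence $|\mathbf{F}(1)|\ge|A_n|=4(n+1)+2$ for every $n$, so $\mathbf{F}(1)$ is infinite, and a variety possessing an infinite finitely generated free algebra is not locally finite. I prefer this route over directly verifying that the infinite algebra $\alg{B}^\ast$ of Figure~\ref{fig:locfinite} is a $1$-generated member of $\CIdInRL$, precisely because the $\mathbf{F}(1)$-argument uses only the \emph{finite} algebras $\alg{A}_n$ and thus sidesteps any infinite gluing. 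The only real obstacle is the verification hidden in the phrase \emph{iterated application of Theorem~\ref{thm:gluing}}: one must check that each prescribed map $\varphi_i$ (and $\psi_n$), together with the associated elements $a,b$ determined as in Section~\ref{sec:gluing}, genuinely meets all four prerequisites of the construction---bijectivity, preservation of $\cdot$ and of $\vee$, and the base-point equation $0_b=\varphi(a\vee 0)$---so that every $\alg{A}_n$ is a bona fide member of $\CIdInRL$. Once this is secured, the cardinality and generation claims are routine and the proposition follows immediately.
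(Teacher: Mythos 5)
Your proposal is correct and takes essentially the same approach as the paper: the paper's proof of this proposition is exactly the preceding construction of the $1$-generated algebras $\alg{A}_n\in\CIdInRL$ (membership by iterated application of Theorem~\ref{thm:gluing}, generation by $x_0$ via the displayed recurrences), with unbounded cardinality then yielding non-local-finiteness. Your explicit count $|A_n|=4(n+1)+2$ and the routine deduction through the infinite one-generated free algebra $\mathbf{F}(1)$ simply spell out the standard final step that the paper leaves implicit.
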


\noindent \textbf{Acknowledgements}

\noindent Some of the computations leading to our results have been obtained with the help of Prover9 and Mace4~\cite{P9M4}. 
The authors acknowledge the support of funding from the European Union’s Horizon 2020 research and innovation programme 
under the Marie Sk{\l}odowska-Curie grant agreement No 689176. 



\end{document}